\begin{document}
\def\K{\mathbb{K}}
\def\R{\mathbb{R}}
\def\C{\mathbb{C}}
\def\Z{\mathbb{Z}}
\def\Q{\mathbb{Q}}
\def\D{\mathbb{D}}
\def\N{\mathbb{N}}
\def\T{\mathbb{T}}
\def\P{\mathbb{P}}
\renewcommand{\theequation}{\thesection.\arabic{equation}}
\newtheorem{theorem}{Th\'eor\`eme}[section]
\newtheorem{cond}{C}
\newtheorem{lemma}{Lemme}[section]
\newtheorem{corollary}{Corollaire}[section]
\newtheorem{prop}{Proposition}[section]
\newtheorem{definition}{D\'efinition}[section]
\newtheorem{remark}{Remarque}[section]
\newtheorem{example}{Exemple}[section]
\bibliographystyle{plain}

\title{Approches courantielles  \`a la Mellin dans un cadre non archim\'edien}

\author {Ibrahima Hamidine}

\address{D\'epartement de Math\'ematiques\\UFR  Sciences et Technologies \\ Universit\'e Assane Seck de Ziguinchor, BP: 523 (S\'en\'egal)}

\email{i.hamidine5818@zig.univ.sn}

\thanks{This work was Supported by NLAGA-Simons project, partially supported by IMB of Bordeaux (France) and UASZ (S\'en\'egal).}
\subjclass[2010]{32U25, 32U35, 32U40, 14G22, 14G40, 14TXX}

\date{\today}

\maketitle

\begin{abstract}
On propose une approche du type Mellin pour l'approximation des courants d'int\'egration ou la r\'ealisation 
effective de courants de Green normalis\'es associ\'es \`a un cycle $\bigwedge_1^m [{\rm div}(s_j)]$, 
o\`u $s_j$ est une section m\'eromorphe d'un fibr\'e en droites $\mathscr{L}_j \rightarrow U$ au-dessus 
d'un ouvert $U$ d'un bon espace de Berkovich, 
lorsque 
chaque $\mathscr{L}_j$ est \'equip\'e d'une m\'etrique lisse et
que ${\rm codim}_{U}\big(\bigcap_{j\in J}{\rm Supp}[{\rm div(s_j)}]\big)
\geq \# J$ pour tout ensemble $J\subset \{1,...,p\}$. On \'etudie aussi la transposition au cadre non archim\'edien 
des formules de Crofton et de King, en particulier la r\'ealisation approch\'ee de courants de Vogel et de Segre.
\vskip 1mm
\noindent
\textit{\bf Mots clefs.} Courants, diviseurs, \'equations de Lelong-Poincar\'e, formule de King, nombres de Lelong.
\vskip 2mm
\noindent
{\rm Abstract.} ({\bf Mellin's Currential approaches  in a  non archimedean framework}). We propose an approach of 
Mellin type for the approximation of integration currents or the effective realization of normalized Green currents 
associated with a cycle $ \bigwedge_1^m[{\rm div} (s_j)] $, where $s_j $  is a meromorphic section of a line bundle 
$ \mathscr{L}_j \rightarrow U$ over an open $U$ in a good Berkovich space  
 when each 
$ \mathscr{L}_j$ has a smooth  metric and  
$ {\rm codim}_{U}\big (\bigcap_{j \in J} {\rm Supp} [{\rm div (s_j)}] \big)\geq \# J$ for every set 
$ J \subset \{1, ..., p \} $. We also study the transposition to the non archimedean context of Crofton  and King formulas, 
particularly the approximate realization of Vogel and Segre currents.
\vskip 1mm
\noindent
\textit{\bf Key words and phrases.} Currents, divisors, Lelong-Poincar\'e equations, King formula, Lelong numbers.
\end{abstract}

\section*{Introduction}
\'Etant donn\'ee une fonction $f$ holomorphe dans un ouvert 
$\mathscr U$ de $\C^n$ et une $(n-1,n-1)$-forme diff\'erentielle 
$\varphi$ \`a support compact dans $\mathscr {U}$, la transform\'ee de Mellin 
de la fonction 
$$
\varepsilon \in\, ]0,+\infty[\, \longmapsto I_f(\varphi)(\varepsilon) = \frac{1}{2i\pi} \int_{\{|f|^2 = \varepsilon\}} 
\frac{df\wedge \varphi}{f}  
$$
est formellement la fonction m\'eromorphe  
\begin{equation*} 
\lambda \in \C\longmapsto \lambda \int_0^\infty \varepsilon^{\lambda-1} 
\, I_f(\varphi)(\varepsilon)\, d\varepsilon 
= \frac{1}{2i\pi} \, \Big\langle 
[\lambda |f|^{2(\lambda-1)} \, \overline{df}]\,,\, df\wedge \varphi \Big\rangle
= \Big\langle dd^c \Big[\frac{|f|^{2\lambda}}{\lambda}\Big]\,,\, \varphi \Big\rangle, 
\end{equation*} 
($dd^c=\frac{-\partial\overline{\partial}}{2i\pi}$) et sa valeur en $\lambda =0$ (qui n'est pas un p\^ole) est exactement 
$\langle [{\rm div}(f)]\,,\, \varphi\rangle$, ce qui constitue une mani\`ere alternative de formuler dans le cadre de la g\'eom\'etrie analytique complexe la formule 
de Lelong-Poincar\'e. Introduite pour la premi\`ere fois dans un tel contexte 
dans \cite[chapitre 1, sections 1 et 2]{BGVY}, cette approche s'est r\'ev\'el\'ee particuli\`erement utile pour envisager les probl\`emes d'intersection et de division dans le cadre de la g\'eom\'etrie analytique complexe en profitant du scindage du courant d'int\'egration $[{\rm div}(f)]$ en $[{\rm div}(f)] = \bar\partial [1/f] \wedge df$, o\`u $[1/f]$ est le $(0,0)$-courant Valeur Principale extension standard depuis l'ouvert $\mathscr U_f= \mathscr U \setminus 
{\rm Supp}([{\rm div}(f)])$ de la fonction holomorphe inversible $1/f$. 
Quand bien m\^eme pareil scindage du courant $[{\rm div}(f)]$ ne s'av\`ere plus possible dans le contexte de la g\'eom\'etrie analytique sur un corps $\K$ \'equip\'e d'une valeur  
absolue non archim\'edienne, un des objectifs de ce travail est de montrer qu'une approche du m\^eme type peut n\'eanmoins \^etre conduite dans ce cadre, suivant en particulier l'approche \`a la th\'eorie de l'intersection incompl\`ete 
propos\'ee dans \cite{ASWY14} (toujours dans le cadre de la g\'eom\'etrie analytique complexe).        
\vskip 1mm
\noindent
Soit $\mathbb K$ un corps \'equip\'e d'une valeur absolue ultram\'etrique (triviale ou non)
$|\ |$ pour laquelle $\mathbb K$ est aussi complet et $\Gamma$ son groupe des valeurs, c'est-\`a-dire le sous-groupe de
$\R$ obtenu comme l'image de $\K^*$ par la valuation $-\log |\ |$.
Soit $X$ une vari\'et\'e alg\'ebrique de dimension $n$ d\'efinie sur $\mathbb K$. On note $X^{\rm an}$ l'analytifi\'e de
$X$ au sens de Berkovich \cite[Remarque 3.4.2]{Berk90}, ensemblistement d\'ecrit ainsi (voir par exemple \cite[section 4]{Gub14})~: si $U ={\rm Spec}(A)$
est un ouvert affine de $X$, on consid\`ere l'ensemble $U^{\rm an}$ de toutes les semi-normes multiplicatives 
ultram\'etriques sur l'alg\`ebre $A$ 
prolongeant la valeur absolue $|\ |$ sur $\mathbb K$ et l'on \'equipe cet ensemble $U^{\rm an}$ de la topologie 
la moins fine rendant continues toutes 
les applications d'\'evaluation $x\in U^{\rm an}
\mapsto x(a)=|a|_x$ ($a\in A$)~; en recollant les divers $U^{\rm an}$ pour tous les ouverts affines $U$ de $X$, 
on obtient un espace topologique connexe, 
localement compact et s\'epar\'e, sur lequel il reste \`a construire un faisceau structural $\mathcal O_{X^{\rm an}}$ 
de fonctions dites r\'eguli\`eres. 
Ce faisceau est d\'efini ainsi (voir par exemple \cite[chapitre 1, section 1.2]{BPS} pour une pr\'esentation rapide). 
Chaque point
$x$ de $U^{\rm an}$ induit une norme sur l'anneau int\`egre
$B_x = A/{\rm ker}\, x$, o\`u ${\rm ker}\, x = \{a\in A\,;\, x(a) =0\}$,
cette norme s'\'etendant en une norme $|\ |_x$ sur le corps des fractions de
$B_x$, corps que l'on compl\`ete relativement \`a cette norme en un corps $\mathscr H_x$. Si
$\mathscr U$ est un ouvert de $U^{\rm an}$, une fonction r\'eguli\`ere dans
$\mathscr U$ (c'est-\`a-dire un \'el\'ement de $\mathcal O_{X^{\rm an}}(\mathscr{U})$) est par d\'efinition une fonction
$$
x\in \mathscr{U} \longmapsto f(x) \in \bigsqcup_{x\in \mathscr{U}} \mathscr{H}_x
$$
se pliant \`a la clause d'approximation suivante~: pour tout $x\in \mathscr{U}$, il existe un voisinage
$\mathscr{U}_x$ de $x$ dans $\mathscr{U}$ tel que
\begin{equation}\label{faisceau}
\begin{split}
& \forall\, \epsilon>0,\ \exists\, a_{x,\epsilon}
\in A,\, \exists\, b_{x,\epsilon}
\in A\setminus {\rm ker}\, x,\ {\rm tels\ que} \\
& \forall\, \xi\in \mathscr{U}_x\,,\ |f(\xi) - a_{x,\epsilon}(\xi)/b_{x,\epsilon}(\xi)|_{\xi } < \epsilon,
\end{split}
\end{equation}
o\`u il faut ici entendre $a_{x,\epsilon}(\xi)$ et $b_{x,\epsilon}(\xi)$
comme les classes dans $B_\xi$ des
\'el\'ements $a_{x,\epsilon}(\xi)=\xi(a_{x,\epsilon})$ et
$b_{x,\epsilon}(\xi)=\xi(b_{x,\epsilon})$, le quotient $a_{x,\epsilon}(\xi)/b_{x,\epsilon}(\xi)$
\'etant alors un \'el\'ement du corps des fractions de cet anneau int\`egre
$B_\xi$, donc de son compl\'et\'e $\mathscr H_\xi$ pour la norme $|\ |_\xi$.
\vskip 1mm
\noindent
Lorsque $X$ est le tore $T_r :=
{\rm Spec}\, \K[X_1^{\pm 1},...,X_r^{\pm 1}]$ (plus g\'en\'eralement
$\K[M_r]$, o\`u $M_r$ est un groupe ab\'elien libre de rang $r$), l'analytifi\'e
$T^{\rm an}_r$ se visualise ensemblistement ainsi~: se donner une
semi-norme multiplicative sur $\K[X_1^{\pm 1},...,X_r^{\pm 1}]$ prolongeant
la valeur absolue $|\ |$ sur $\mathbb K$ revient \`a se donner une $\K$-extension
$\mathbb K\subset \mathbb{L}$ avec une valeur absolue $|\ |_{\mathbb L}$ prolongeant la valeur absolue sur $\K$, 
un point $\ell \in \mathbb L^r$ et \`a poser
$x_{\mathbb L,\ell}(a)=|a(\ell)|_{\mathbb L}$.
On dispose d'une application continue et propre de tropicalisation~:
$$
x \in T_r^{\rm an} \stackrel{\rm trop}{\longmapsto} \big(- \log (x(X_1)),...,-\log (x(X_r))\big) \in \R^r\; 
({\rm ou}\ N_{r,\R},\ N_r = {\rm Hom}(M_r,\Z)).
$$
Cette application de tropicalisation admet la section suivante~: \`a $(\omega_1,...,\omega_r)
\in \R^r$ (respectivement dans $N_{r,\R}$), on associe la norme multiplicative sur $\K[X_1^{\pm 1},...,X_n^{\pm 1}]$ 
(respectivement sur $\K[M_r]$) d\'efinie par
$$
x_\omega (a) := \sup_{\alpha \in {\rm Supp}(a)} |a_\alpha|\, e^{-\langle \alpha,\omega\rangle}.
$$
L'image de $\R^r$ (ou plus g\'en\'eralement de $N_{r,\R}$) par cette section $\omega \mapsto x_\omega$ est appel\'ee 
squelette de $T^{\rm an}_r$~; 
c'est un sous-ensemble ferm\'e de $T^{\rm an}_r$ sur lequel $T_r^{\rm an}$ se r\'etracte continument au sens fort.
\vskip 2mm
\noindent
Le r\^ole des cartes locales sur $X^{\rm an}$ est tenu par les analytifications
des applications moment de la vari\'et\'e alg\'ebrique $X$ (voir par exemple \cite{Gub14}).
\vskip 2mm
\noindent
On dispose aussi sur $X^{\rm an}$ d'\^etres cette
fois \og souples\fg\ et non plus \og rigides\fg\
(comme le sont les sections locales du faisceau structurant), \`a savoir des sections du faisceau $\mathscr{A}^{0,0}$ 
des fonctions lisses. Si $U$ est un ouvert de
$X^{\rm an}$, une section de $\mathscr{A}^{0,0}$ dans $U$ est par d\'efinition une
fonction de $U$ dans $\R$ pouvant s'exprimer localement (au voisinage de tout point $x$ de $U$)
sous la forme
\begin{equation}\label{reguliere}
\xi \longmapsto \phi( \log |f_{x,1}|_\xi,...,\log |f_{x,m_x}|_\xi),
\end{equation}
o\`u les fonctions $f_{x,j}$ sont des fonctions r\'eguli\`eres inversibles au voisinage de
$x$ et $\phi$ une fonction de $\R^{m_x}$ dans $\R$ de classe $C^\infty$ au voisinage de l'image
dans $\R^{m_x}$ d'un voisinage de $x$ par l'application
$$
\xi \longmapsto \big(\log |f_{x,1}|_\xi,...,\log |f_{x,m_x}|_\xi\big) \in \R^{m_x}.
$$
Tel est le cas par exemple des fonctions s'exprimant au voisinage $U_x$ de tout point $x$ de $U$ sous la forme
\begin{equation*}
\xi \longmapsto \prod\limits_{j=1}^{m_x} \Big(\frac{|f_{x,j}(\xi)|_\xi}{e^{\rho_{x,j}}}\Big)
^{\lambda_j}
= \prod\limits_{j=1}^{m_x} \exp \big(\lambda_j\, (\log |f_{x,j}(\xi)|_\xi -\rho_{x,j})\big)
\end{equation*}
o\`u $\lambda_1,...,\lambda_{m_x}$ sont des param\`etres r\'eels, $f_{x,1},...,f_{x,m_x}$ des fonctions r\'eguli\`eres 
et inversibles au voisinage de $x$ et
$\rho_{x,1},...,\rho_{x,m_x}$ des \'el\'ements de $\mathscr{A}^{0,0}(U_x)$.
\vskip 2mm
\noindent
On dispose aussi sur $X^{\rm an}$, pour tout couple d'entiers $p,q$ entre $0$ et $n$, du faisceau $\mathscr{A}^{p,q}$ 
des $(p,q)$-formes et, par dualit\'e, du 
faisceau $\mathscr{D}_{p,q}$ des $(p,q)$-courants,
tous deux introduits par A. Chambert-Loir et A. Ducros (\cite{ChLD,ChL}) et W. Gubler \cite{Gub14}). Rappelons bri\`evement comment sont 
d\'efinis ces faisceaux. Afin de pouvoir profiter, 
tout en travaillant dans un cadre r\'eel, de la positivit\'e inh\'erente au cadre complexe, on introduit dans un 
premier temps sur $\R^r$ (ou $\N_{r,\R}$) le faisceau 
des $(p,q)$ super-formes ($0\leq p,q\leq r$)~: une section de ce faisceau dans un ouvert $\Omega\subset \R^r$ (ou 
$N_{r,\R}$) est une $(p,q)$-forme (au sens usuel) dans 
l'ouvert ${\rm Log}^{-1}(\Omega)\subset (\C^*)^r$ (ou $\C[N_r](\C)$) s'exprimant comme
$$
\omega = \sum\limits_{\stackrel{1\leq i_1 < \dots < i_p\leq r}
{1\leq j_1 < \dots < j_q\leq r}} \omega_{I,J} ({\rm Log}(z))\, \frac{dz_I}{z_I} \wedge
\frac{d\bar z_J}{\bar z_J},
$$
o\`u ${\rm Log}~: z \mapsto (\log |z_1|,...,\log |z_r|)$ et les $\omega_{I,J}$
sont des fonctions \`a valeurs r\'eelles de classe $C^\infty$ dans $\Omega$.
Par dualit\'e, on dispose du faisceau des super-courants de bidimension $(p,q)$, dont les sections
dans un ouvert $\Omega\subset \R^r$ (ou $N_{r,\R}$) sont les courants de bidimension $(p,q)$ dans ${\rm Log}^{-1}(\Omega)$ 
s'exprimant sous la forme
$$
T = \sum\limits_{\stackrel{1\leq i_1' < \dots < i_{n-p}'\leq r}
{1\leq j_1' < \dots < j_{n-q}'\leq r}} T_{I',J'} ({\rm Log}(z))\, \frac{dz_{I'}}{z_{I'}} \wedge
\frac{d\bar z_{J'}}{\bar z_{J'}},
$$
o\`u les distributions $T_{I',J'}$ sont des distributions r\'eelles dans $\Omega$ et
$$
\langle T_{I',J'}({\rm Log}(z)),\varphi \rangle :=
\Big\langle T_{I',J'}(x), \frac{1}{(2\pi)^r}\int_{[0,2\pi]^r}
\varphi(e^{x+i\theta})\, d\theta_1 \dots d\theta_r\Big\rangle
$$
pour toute fonction test $\varphi\in \mathcal D({\rm Log}^{-1}(\Omega),\R)$.
Le courant tropical de bidimension $(n,n)$ attach\'e (voir \cite{Bab,BabH}) \`a un cycle tropical de dimension $n<r$ dans 
$\R^r$ ou $N_{r,\R}$ (la pond\'eration \'etant 
prise en compte) constitue un exemple important de super-courant de bidimension $(n,n)$ dans $\R^r$ ou $N_{r,\R}$.
Pour plus de d\'etails sur la mani\`ere  dont les formes diff\'erentielles r\'eelles sur les espaces de Berkovich sont introduites,  voir \cite{Gub14}.
Le faisceau $\mathscr D_{p,q}(U)$ des courants de bidimension $(p,q)$ sur $X^{\rm an}$ est d\'efini comme le dual 
 du faisceau $\mathscr A^{p,q}(U)$ des $(p,q)$-formes diff\'erentielles (on notera par la suite, pour tout ouvert 
 $U$ de $X^{\rm an}$ par $\mathscr{A}^{p,q}_c(U)$ le $\R$-espace vectoriel des $(p,q)$-formes diff\'erentielles \`a support compact inclus dans $U$)
 \cite[section 6]{Gub14}. On dispose en particulier du courant de bidegr\'e $(p,p)$
d'int\'egration sur une sous-vari\'et\'e $Y$ de $X$ de codimension $0\leq p\leq n$ et, lorsque
$f$ est une fonction r\'eguli\`ere dans un ouvert $U$ de $X^{\rm an}$, du courant de bidegr\'e $(1,1)$ d'int\'egration 
(avec multiplicit\'es prises en compte) $[{\rm div}(f)]$. Si $u~: U \rightarrow \R$ est une fonction continue 
dans un ouvert $U$ de $X^{\rm an}$, $u$ d\'efinit naturellement un courant de bidegr\'e
$(0,0)$ dans $U$ (voir par exemple \cite[section 6]{GuK}), not\'e $[u]$.
\vskip 2mm
\noindent
Le but de ce travail est de d\'evelopper une approche bas\'ee sur le prolongement analytique aux fins de la r\'esolution 
de l'\'equation de Lelong-Poincar\'e, 
de la construction de courants de Green (voir la section \ref{sectionGreen}) et de l'explicitation des courants de Vogel 
(section \ref{sectionVogel}) ou de Segre 
(section \ref{sectionsegre})~; une interpr\'etation de la formule de King, en relation avec la d\'ecomposition de Siu des 
courants positifs ferm\'es et le 
couplage \og composantes stables/composantes mobiles\fg\ en th\'eorie de l'intersection impropre \cite{ASWY14} est aussi \'evoqu\'ee 
dans ce travail (section \ref{sectionKing}).
L'objectif vis\'e ici est d'esquisser une transposition au cadre non archim\'edien de l'approche \`a la th\'eorie de 
l'intersection dans le cadre impropre telle 
qu'elle est d\'evelopp\'ee dans  \cite{ASWY14}
(dans le contexte analytique complexe). \`A plus long terme, nous esp\'erons exploiter cette approche en la combinant avec 
le calcul d'Igusa \cite{Igu} (dans 
le contexte ultam\'etrique) aux fins d'unifier les contributions aux places finies et infinies dans par exemple l'expression 
de la hauteur logarithmique 
(relativement \`a un choix de m\'etrique, lisse ou non) d'un cycle arithm\'etique \cite{BPS}.

\section{Approche du type Mellin aux \'equations de Le\-long-Poincar\'e}\label{section1} 
Soit $X$ un bon $\K$-espace de Berkovich, c'est-\`a-dire un $\K$-espace de Berkovich dont tout point admet un voisinage 
affino\"ide, donc une base de voisinages 
affino\"ides. On suppose $X$ de dimension pure  $n$ et sans bord. On pourra  penser
$X$ comme l'analytifi\'e d'une  vari\'et\'e alg\'ebrique de dimension $n$ d\'efinie au-dessus de $\K$.
\vskip 1mm
\noindent
Soit $U\subset X$ un ouvert de $X$ et $f$ une fonction m\'eromorphe r\'eguli\`ere non diviseur de z\'ero dans $U$, c'est-\`a-dire une 
fonction s'exprimant au voisinage 
de tout point $x\in U$ comme le quotient de deux sections locales de $\mathcal O_{X}$.
On note $U_f$ le plus grand ouvert de $U$ dans lequel $f$ s'exprime localement comme
une section locale inversible du faisceau $\mathcal O_{X}$.
\vskip 1mm
\noindent
Si $\omega$ est une section \`a support compact de $\mathscr{A}^{n-1,n-1}_c(U)$,
le support (compact)
de la  $(n-1,n)$-forme $d''\omega$ \'evite tout sous-ensemble ferm\'e de Zariski d'int\'erieur vide 
\cite[lemme 3.2.5]{ChLD}. Pour tout
$\lambda\in \R^*$, la forme
$$
x\in U_f \mapsto \Big(d'\Big(\frac{|f|^\lambda}{\lambda}\Big) \wedge d''\omega\Big)(x) =
\Big(d'\Big(\frac{e^{\lambda \log|f|}}{\lambda}\Big)\wedge
d''\omega\Big)(x) 
$$
appartient \`a $\mathscr A^{n,n}_c(U_f)$ et l'on a d'apr\`es la formule de Stokes ($X$ est suppos\'ee sans bord), si $\varphi$ 
d\'esigne une fonction lisse 
identiquement \'egale \`a $1$ au voisinage du support de $d''\omega$ et de support compact dans $U_f$ 
(que l'on peut encore construire gr\^ace au th\'eor\`eme de partitionnement de l'unit\'e,  \cite[proposition 3.3.6]{ChLD}),
\begin{multline*} 
\int_{U_f} d'\Big(\frac{|f|^{\lambda}}{\lambda}\Big) \wedge d'' \omega 
= \int_{X} d'\Big(\varphi \frac{|f|^{\lambda}}{\lambda}\Big) \wedge d'' \omega \\ 
= -\int_{X} \Big(\varphi \, \frac{|f|^{\lambda}}{\lambda}\Big)\, d'd''\omega = 
- \Big\langle \Big[ 
\frac{|f|^\lambda}{\lambda}\Big]\,,\, d'd''\omega \Big\rangle 
= \Big\langle d'\, \Big[ 
\frac{|f|^\lambda}{\lambda}\Big]\,,\, d''\omega \Big\rangle, 
\end{multline*} 
o\`u le courant $[|f|^\lambda/\lambda]$ est le $(0,0)$-courant d\'efini \`a partir de la fonction lisse 
$$
|f|^{\lambda}/\lambda~: U_f \rightarrow \R
$$
suivant le lemme 4.6.1 de \cite{ChLD}.  
\vskip 1mm
\noindent     
On d\'efinit, pour tout $\lambda \in \R^*$, un courant
$T_\lambda^f \in \mathscr{D}_{1,1}(U)$ en posant
\begin{multline*}
\forall\, \omega \in \mathscr{A}^{n-1,n-1}_c(U),\quad
\langle T_\lambda^f,\omega\rangle := -
\Big\langle d'\, \Big[ 
\frac{|f|^\lambda}{\lambda}\Big]\,,\, d''\omega \Big\rangle = 
- \int_{U_f} d'\Big(\frac{|f|^{\lambda}}{\lambda}\Big) \wedge d'' \omega  \\ 
= -\int_{U_f} |f|^\lambda d'\Big(\log |f|\Big)\wedge d''\omega. 
\end{multline*}   
D'apr\`es le th\'eor\`eme de convergence domin\'ee de Lebesgue, la fonction 
$\lambda \mapsto T_\lambda^f$ (\`a valeurs dans $\mathscr D_{1,1}(U)$) 
admet comme limite lorsque $\lambda$ tend vers $0$ le $(1,1)$-courant 
\begin{multline*} 
\omega \in \mathscr A^{n-1,n-1}_c(U) 
\longmapsto - \int_{U_f} 
d'\Big(\varphi\, \log |f|\Big) \wedge d''\omega = 
\int_{X^{\rm an}} \varphi\, \log |f|\, d'd''\omega \\ 
= \big\langle \big[\varphi \, \log |f|\big]\,,\, d'd''\omega \big \rangle  
= \big\langle d'd''\, \big[\varphi \, \log |f|\big]\,,\,\omega\big\rangle = \big \langle [{\rm div}(f)]\,,\,\omega\big\rangle 
\end{multline*} 
d'apr\`es la formule de Stokes, une nouvelle fois le lemme    
4.6.1 de \cite{ChLD}, et enfin la formule de Lelong-Poincar\'e \cite[th\'eor\`eme 4.6.5 ]{ChLD}. 
\vskip 2mm
\noindent
On peut maintenant envisager le cas o\`u $f_1$ et $f_2$ sont deux fonctions r\'eguli\`eres m\'eromorphes dans un ouvert $U$ de $X$, telles que ${\rm codim}_{U}
({\rm Supp}\,([{\rm div}( f_1)]) \cap {\rm Supp}\,([{\rm div}( f_2)]))\geq 2$. Suivant la description du courant 
$[{\rm div}(f_1)]$ donn\'ee dans la section 4.6 de \cite[commentaire apr\`es la preuve du lemme 4.6.4]{ChLD}, on exprime 
ce courant comme la somme de courants d'int\'egration $\pm [{\rm div}(f_{1,\kappa})]$, 
o\`u $f_{1,\kappa}$ est une fonction r\'eguli\`ere dans $U$ non diviseur de $0$. 
On d\'esigne par $Z_{1,\kappa}$ le $\K$-sous-espace analytique ferm\'e (de dimension $n-1$) $f_{1,\kappa}^{-1}(\{0\})$, que l'on consid\`ere comme un 
$\K$-espace analytique de dimension $n-1$~; on note $\iota_{1,\kappa}$ le morphisme de 
$\K$-espaces analytiques correspondant \`a l'inclusion $Z_{1,\kappa}\subset U$. 
L'action du courant $[{\rm div}(f_1)]$ s'exprime sous la forme 
$$
\omega \in \mathscr A^{n-1,n-1}_c(U) \longmapsto 
\big\langle [{\rm div} (f_1)]\,,\omega\big\rangle := 
\sum_\kappa \int_{Z_{1,\kappa}\cap U} 
\omega = \sum_\kappa \int_{\iota_{1,\kappa}^{-1}(U)} \iota_{1,\kappa}^* (\omega).
$$
Pour chaque indice $\kappa$, on d\'efinit, pour tout $\lambda \in \R^*$, 
suivant le lemme 4.6.1 de \cite{ChLD}, un  $(1,1)$-courant $\big[|f_2|^\lambda/\lambda]\big]\, 
[{\rm div}(f_{1,\kappa})]$ par~: 
$$ 
\Big\langle 
\Big[\frac{|f_2|^\lambda}{\lambda}\Big] \, 
[{\rm div}(f_{1,\kappa})]\,,\omega \Big\rangle :=   
\int_{\iota_{1,\kappa}^{-1}(U)} 
\Big(\frac{\iota_{1,\kappa}^* (\theta)\, |\iota_{1,\kappa}^* (f_2)|^\lambda}{\lambda}\Big)\, 
\iota_{1,\kappa}^* (\omega) 
$$
pour tout $\omega \in \mathscr A^{n-1,n-1}_c(U)$ 
($\theta$ d\'esignant encore une fonction lisse de support inclus dans 
$U_{f_2}$ identiquement \'egale \`a $1$ au voisinage 
de $Z_{1,\kappa}\cap {\rm Supp}\, \omega$, que l'on peut encore construire gr\^ace au th\'eor\`eme de partitionnement de l'unit\'e,  
\cite[proposition 3.3.6]{ChLD}). On observe 
en utilisant la formule de Stokes dans $Z_{1,\kappa}$ 
et le lemme 4.6.1 de \cite{ChLD} que  
\begin{multline*}  
\Big\langle 
d'd''\, \Big(\Big[\frac{|f_2|^\lambda}{\lambda}\Big]\Big) \, 
[{\rm div}(f_{1,\kappa})]\,,\omega \Big\rangle  
= - \int_{\iota_{1,\kappa}^{-1}(U_{f_2})} 
\Big( 
d' \Big(\frac{|\iota_{1,\kappa}^* f_2|^\lambda}{\lambda}\Big)\Big) 
\wedge d''\big(\iota_{1,\kappa}^* (\omega)\big) \\ 
= - 
\int_{\iota_{1,\kappa}^{-1}(U_{f_2})} 
|\iota_{1,\kappa}^*(f_2)|^{\lambda} \, d' \log| \iota_{1,\kappa}^*(f_2)|\wedge 
d''\big(\iota_{1,\kappa}^* (\omega)\big) 
\end{multline*}
pour tout $\omega \in \mathscr A^{n-2,n-2}_c(U)$. 
La limite lorsque $\lambda$ tend vers $0$ dans $\R^*$ (au sens de la convergence faible des $(2,2)$-courants sur 
$U$) de $d'd''\big[|f_2|^\lambda/\lambda\big]\, 
\big[{\rm div}(f_{1,\kappa})\big]$ existe  
et d\'efinit un $(2,2)$-courant de support inclus dans $Z_1\cap Z_2$ que l'on conviendra de noter $[{\rm div}(f_{1,\kappa})] 
\wedge [{\rm div}(f_2)]$ 
(en respectant pour l'instant cet ordre). On observe d'ailleurs que 
\begin{equation}\label{prod2diviseurs}  
[{\rm div}(f_{1,\kappa})]\wedge 
[{\rm div}(f_{2})] 
= (\iota_{1,\kappa})_* \Big(d'd'' \big[\log |\iota_{1,\kappa}^* f_2|\big]\Big).    
\end{equation} 
On pose, en respectant pour l'instant l'ordre,  
$$
[{\rm div}(f_{1})] \wedge [{\rm div}(f_2)] := 
\sum_\kappa \pm [{\rm div}(f_{1,\kappa})] \wedge [{\rm div}(f_2)],  
$$ 
puisque $[{\rm div}(f_1)]:= 
\sum_\kappa \pm [{\rm div}(f_{1,\kappa})] $ (voir \cite[lemme 4.6.4]{ChLD} et commentaire qui suit la d\'emonstration du dit lemme).  

\begin{prop}\label{prop2fonctions} Soient $f_1$ et $f_2$ deux fonctions m\'eromorphes dans un ouvert $U$ de $X$,
telles que ${\rm codim}_U \big({\rm Supp}\, \big([{\rm div}(f_1)]\big) \cap {\rm Supp}\, \big([{\rm div}(f_2)]\big)\big)\geq 2$. Pour tout
$(\lambda_1,\lambda_2) \in (\R^*)^2$, on d\'efinit un courant $T_{\lambda_1,\lambda_2}^{f_1,f_2}$
appartenant \`a $\mathscr D_{2,2}(U)$ en posant
$$
\forall\, \omega \in
\mathscr{A}^{n-2,n-2}_c(U),\quad
\big\langle T_{\lambda_1,\lambda_2}^{f_1,f_2},\omega \big\rangle
= - \int_{U} d'\Big(\frac{|f_2|^{\lambda_2}}{\lambda_2}\Big)
\wedge d'd'' \Big(\frac{|f_1|^{\lambda_1}}{\lambda_1}\Big)\wedge d''\omega
$$
apr\`es avoir d\'ecoup\'e cette int\'egrale suivant un partitionnement de l'unit\'e 
$1=\sum_\iota \varphi_\iota$ subordonn\'ee au support de $d''\omega$ afin d'en assurer la convergence. 
Alors, on a, au sens des courants
\begin{equation}\label{produit}
\lim\limits_{\stackrel{(\lambda_1,\lambda_2) \rightarrow (0,0)}{\lambda_1 \not=0,\ \lambda_2
\not=0}} T_{\lambda_1,\lambda_2}^{f_1,f_2} = [{\rm div}(f_1)] \wedge [{\rm div}(f_2)], 
\end{equation}
o\`u le courant figurant au membre de droite a \'et\'e pr\'ec\'edemment d\'efini en termes des non-diviseurs de z\'ero $f_{1,\kappa}$ 
figurant dans $f_j$  ($j=1,2$).  
\end{prop}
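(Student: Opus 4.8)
Le plan est de ramener le calcul au cas d'un seul non-diviseur de zéro, puis de vérifier que la limite jointe figurant dans \eqref{produit} coïncide avec la limite itérée (d'abord $\lambda_1\to0$, ensuite $\lambda_2\to0$) qui a servi à définir le membre de droite. On commencerait par utiliser la décomposition $[{\rm div}(f_1)]=\sum_\kappa\pm[{\rm div}(f_{1,\kappa})]$ et un partitionnement de l'unité $1=\sum_\iota\varphi_\iota$ subordonné au support de $d''\omega$, afin de se ramener, par linéarité et localisation, à étudier pour chaque $\kappa$ la contribution où $f_1$ est remplacée par le non-diviseur de zéro $f_{1,\kappa}$, au voisinage d'un point où $f_{1,\kappa}$ et $f_2$ admettent une description locale commode. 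Sur chaque morceau, l'intégrale définissant $T^{f_1,f_2}_{\lambda_1,\lambda_2}$ est absolument convergente pour tout $(\lambda_1,\lambda_2)\in(\R^*)^2$.

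Je ferais ensuite tendre $\lambda_1$ vers $0$, à $\lambda_2\neq0$ fixé. Comme $|f_{1,\kappa}|^{\lambda_1}/\lambda_1=1/\lambda_1+\log|f_{1,\kappa}|+O(\lambda_1)$ et que $d'd''$ annule la constante $1/\lambda_1$, le théorème de convergence dominée (exactement comme dans le calcul à une fonction effectué avant l'énoncé, via la formule de Lelong--Poincaré \cite[théorème 4.6.5]{ChLD}) assure la convergence faible $d'd''\big[|f_{1,\kappa}|^{\lambda_1}/\lambda_1\big]\to[{\rm div}(f_{1,\kappa})]$. La forme $d'\big(|f_2|^{\lambda_2}/\lambda_2\big)\wedge d''\omega$ étant lisse sur $U_{f_2}$, et l'hypothèse de codimension garantissant que $Z_{1,\kappa}\cap{\rm Supp}\,[{\rm div}(f_2)]$ est de codimension $\geq1$ dans $Z_{1,\kappa}$, on peut insérer la fonction de troncature $\theta$ et utiliser la compatibilité du tiré en arrière $\iota_{1,\kappa}^*$ avec $d'$ et $d''$ pour obtenir
$$
\lim\limits_{\lambda_1\to0}T^{f_{1,\kappa},f_2}_{\lambda_1,\lambda_2}=d'd''\Big[\frac{|f_2|^{\lambda_2}}{\lambda_2}\Big]\,[{\rm div}(f_{1,\kappa})],
$$
le membre de droite étant le courant introduit avant l'énoncé. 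En faisant alors tendre $\lambda_2$ vers $0$, la définition même du produit $[{\rm div}(f_{1,\kappa})]\wedge[{\rm div}(f_2)]$ comme limite de $d'd''[|f_2|^{\lambda_2}/\lambda_2]\,[{\rm div}(f_{1,\kappa})]$ (voir \eqref{prod2diviseurs}) livre la limite itérée $\sum_\kappa\pm[{\rm div}(f_{1,\kappa})]\wedge[{\rm div}(f_2)]=[{\rm div}(f_1)]\wedge[{\rm div}(f_2)]$.

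La difficulté principale sera de promouvoir cette limite itérée en la limite jointe de \eqref{produit}. Le plan est de monomialiser localement $|f_{1,\kappa}|$ et $|f_2|$ --- en exploitant le modèle local monomial/tropical des fonctions lisses sur $X^{\rm an}$ rappelé en introduction --- de sorte que, sur chaque morceau du partitionnement, l'intégrale se décompose en une somme d'intégrales de type Mellin--Laplace sur des cônes polyédraux, dont le prolongement analytique en $(\lambda_1,\lambda_2)$ est méromorphe, à pôles portés par des hyperplans déterminés par la géométrie des cônes. C'est précisément l'hypothèse ${\rm codim}_U\big({\rm Supp}\,[{\rm div}(f_1)]\cap{\rm Supp}\,[{\rm div}(f_2)]\big)\geq2$ qui empêche l'un de ces hyperplans de passer par l'origine : les directions provenant du lieu commun des zéros ne se rencontrent pas en codimension trop petite, de sorte que $(\lambda_1,\lambda_2)\mapsto\langle T^{f_1,f_2}_{\lambda_1,\lambda_2},\omega\rangle$ se prolonge en une fonction holomorphe au voisinage de $(0,0)$, continue jusqu'en ce point. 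La continuité jointe à l'origine force alors la limite jointe à coïncider avec la limite itérée déjà calculée, ce qui établit \eqref{produit}. Une variante plus élémentaire consisterait à établir, grâce à la même hypothèse de codimension, une domination uniforme en $(\lambda_1,\lambda_2)$ sur un voisinage de $(0,0)$ permettant d'appliquer directement le théorème de convergence dominée à la limite jointe.
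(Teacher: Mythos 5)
Votre premi\`ere moiti\'e (d\'ecomposition en non-diviseurs de z\'ero $f_{1,\kappa}$, limite en $\lambda_1$ \`a $\lambda_2$ fix\'e donnant $d'd''\big[|f_2|^{\lambda_2}/\lambda_2\big]\,[{\rm div}(f_{1,\kappa})]$, puis limite en $\lambda_2$ redonnant la d\'efinition \eqref{prod2diviseurs}) est coh\'erente avec les constructions pr\'ec\'edant l'\'enonc\'e, et vous identifiez correctement que tout le contenu de la proposition r\'eside dans le passage de la limite it\'er\'ee \`a la limite jointe. Mais c'est pr\'ecis\'ement l\`a que votre argument pr\'esente une lacune r\'eelle~: la \og monomialisation\fg\ locale de $|f_{1,\kappa}|$ et $|f_2|$, la r\'eduction \`a des int\'egrales de type Mellin--Laplace sur des c\^ones poly\'edraux et le prolongement m\'eromorphe en $(\lambda_1,\lambda_2)$ \`a p\^oles port\'es par des hyperplans ne sont \'etablis nulle part dans le cadre de Chambert-Loir--Ducros o\`u l'on travaille ici (aucune r\'esolution des singularit\'es n'est disponible ni invoqu\'ee, et le texte \'evite justement tout prolongement m\'eromorphe au profit de limites r\'eelles)~; l'affirmation selon laquelle l'hypoth\`ese de codimension \og emp\^eche un hyperplan polaire de passer par l'origine\fg\ est exactement ce qu'il faudrait d\'emontrer et aucun argument n'est donn\'e, pas plus que la \og domination uniforme\fg\ de votre variante. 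Notez aussi que l'int\'egrale d\'efinissant $T^{f_1,f_2}_{\lambda_1,\lambda_2}$ n'est pas \og absolument convergente\fg\ telle quelle~: elle n'a de sens qu'apr\`es la localisation par partition de l'unit\'e.

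Le m\'ecanisme r\'eellement utilis\'e dans la preuve du texte est plus \'el\'ementaire et c'est lui qui exploite l'hypoth\`ese de codimension~: par le lemme 3.2.5 de \cite{ChLD}, le support de $d''\omega$ \'evite $Z_1\cap Z_2$, donc la partition de l'unit\'e peut \^etre choisie subordonn\'ee au recouvrement de ${\rm Supp}(d''\omega)$ par les \emph{deux} ouverts $U_{f_1}$ et $U_{f_2}$. Sur un morceau o\`u $f_1$ est inversible, l'identit\'e $d'd''\log|f_1|=0$ (Lelong--Poincar\'e) fait appara\^itre un facteur explicite $\lambda_1$ devant toute la contribution (voir \eqref{expressioncas1}), laquelle tend donc vers $0$ \emph{inconditionnellement}~; sur un morceau o\`u $f_2$ est inversible, le d\'eveloppement $\big[|f_2|^{\lambda_2}/\lambda_2\big]=[1]/\lambda_2+\sum_{k\geq 1}\lambda_2^{k-1}\big[(\log|f_2|)^k\big]/k!$ et le th\'eor\`eme de convergence domin\'ee donnent directement la limite jointe, la sommation sur $\iota$ (avec $\sum_\iota d'\varphi_\iota=0$) restituant l'action de $[{\rm div}(f_1)]\wedge[{\rm div}(f_2)]$. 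C'est cette localisation, absente de votre proposition, qui tient lieu d'analyse des p\^oles~; sans elle, ou sans une justification compl\`ete de votre prolongement holomorphe au voisinage de $(0,0)$, la limite jointe \eqref{produit} n'est pas \'etablie.
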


\begin{proof} 
On note $Z_1$ et $Z_2$ les sous-espaces analytiques ferm\'es (au sens de Zariski) de $U$ d\'efinis comme les supports des courants 
$[{\rm div}(f_1)]$ et $[{\rm div}(f_2)]$. 
Soit $\omega \in \mathscr A^{n-2,n-2}_c(U)$. 
Du fait de l'hypoth\`ese 
${\rm codim}_U \big({\rm Supp}\, \big([{\rm div}(f_1)]\big) \cap {\rm Supp}\, \big([{\rm div}(f_2)]\big)\big)\geq 2$, il r\'esulte 
du lemme 3.2.5 de 
\cite{ChLD} et de la d\'efinition de la dimension locale $d_{\K}(x)$ ($x\in U)$ 
comme le minimum des dimensions $\K$-analytiques des domaines $\K$-affinoides qui contiennent $x$ (voir par exemple 
\cite[d\'efinition 1.16]{Duc07}), que le support de la 
$(n-2,n-1)$-forme diff\'erentielle 
$d''\omega$ ne rencontre pas le sous-ensemble de Zariski $Z_1\cap Z_2$.      
D'apr\`es le lemme de partitionnement de l'unit\'e \cite[proposition 3.3.6]{ChLD}, 
on peut introduire dans $U$ une partition de l'unit\'e $1= \sum_\iota \varphi_\iota$ (par des fonctions lisses \`a support compact), 
subordonn\'ee au recouvrement 
du compact ${\rm Supp}\, (d''\omega)$ de $U$ par les deux ouverts $U_{f_1}$ et $U_{f_2}$. 
On donne le sens suivant \`a l'expression 
\begin{equation}\label{expressionProp11} 
- \int_{U} d'\Big(\frac{|f_2|^{\lambda_2}}{\lambda_2}\Big)
\wedge d'd'' \Big(\frac{|f_1|^{\lambda_1}}{\lambda_1}\Big)\wedge \varphi_\iota\, d''\omega
\end{equation} 
dans les deux cas (de fait sym\'etriques) o\`u ${\rm Supp}\, \varphi_\iota 
\subset U_{f_1}$ et ${\rm Supp}\, \varphi_\iota \subset U_{f_2}$. 
\begin{itemize} 
\item 
Dans le premier cas, le sens que l'on donne \`a l'expression 
\eqref{expressionProp11} est le suivant~: on introduit suivant le 
lemme 4.6.1 de \cite{ChLD} le courant $\big[|f_2|^{\lambda_2}/\lambda_2\big]$ 
et le sens que l'on donne \`a \eqref{expressionProp11} est 
\begin{multline}\label{expressioncas1}  
- \Big\langle d'\Big[ 
\frac{|f_2|^{\lambda_2}}{\lambda_2}\Big]\,,\, 
d'd'' \Big(\varphi\, \frac{|f_1|^{\lambda_1}}{\lambda_1}\Big) 
\wedge \varphi_\iota\, d''\omega \Big\rangle \\
= - \Big\langle d'\Big[ 
\frac{|f_2|^{\lambda_2}}{\lambda_2}\Big]\,,\, 
d'\big(\varphi\, |f_1|^{\lambda_1}\, d''(\varphi\, \log |f_1|)\big) \wedge \varphi_\iota\, d''\omega 
\Big\rangle \\
= - \Big\langle d'\Big[ 
\frac{|f_2|^{\lambda_2}}{\lambda_2}\Big]\,,\, 
|f_1|^{\lambda_1} \Big(\varphi\, d'd''(\log |f_1|) 
+ \lambda_1 d'(\varphi \log |f_1|) \wedge d''(\varphi \log |f_1|) 
\wedge \varphi_\iota \, d''\omega \Big\rangle 
\\ = - \lambda_1\, \Big\langle d'\Big[ 
\frac{|f_2|^{\lambda_2}}{\lambda_2}\Big]\,,\, \varphi\, 
|f_1|^{\lambda_1}\, d'(\varphi \log |f_1|) \wedge d''(\varphi \log |f_1|)
\wedge \varphi_\iota \, d''\omega \Big\rangle    
\end{multline} 
o\`u $\varphi$ est une fonction lisse de support inclus dans $U_{f_1}$, identiquement 
\'egale \`a $1$ au voisinage du support de $\varphi_\iota\, d''\omega$. 
On a utilis\'e ici le fait que $d'd''\log |f_1| =0$ dans $U_{f_1}$, cons\'equence de la formule de Lelong-Poincar\'e 
\cite[th\'eor\`eme 4.6.5]{ChLD}.
\item 
Dans le second cas, le sens que l'on donne \`a l'expression 
\eqref{expressionProp11} est le suivant~: 
\begin{equation}\label{expressioncas2}
- \Big\langle T_{\lambda_1}^{f_1}\,,\, 
d'\Big(\psi\, \frac{|f_2|^{\lambda_2}}{\lambda_2}\Big) 
\wedge \varphi_\iota\, d''\omega \Big\rangle = 
-\Big\langle T_{\lambda_1}^{f_1}\,,\, \varphi_\iota\, |f_2|^{\lambda_2}\, 
d'(\psi \log |f_2|)\wedge d''\omega\Big\rangle,           
\end{equation} 
o\`u $\psi$ est une fonction lisse de support inclus dans $U_{f_2}$, identiquement 
\'egale \`a $1$ au voisinage du support de $\varphi_\iota\, d''\omega$. 
\end{itemize} 
On sait d'autre part que dans l'ouvert $U_{f_j}$ ($j=1,2)$, 
on a l'identit\'e suivante entre fonctions lisses~: 
$$
\forall\, \lambda_j\in \R^*,\quad 
\frac{|f_j|^{\lambda_j}}{\lambda_j} 
= \sum\limits_{k=0}^\infty \lambda_j^{k-1} 
\frac{(\log |f_j|)^k}{k!},  
$$
la convergence \'etant uniforme sur tout compact de $U_{f_2}$.  
Si l'on note $[(\log |f_j| )^k]$ le $(0,0)$-courant dans $U$ 
associ\'e \`a la fonction $(\log |f_j|)^k$ suivant le lemme 
4.6.1 de \cite{ChLD}, on a donc 
les identit\'es courantielles 
$$
\Big[\frac{|f_j|^{\lambda_j}}{\lambda_j}\Big] 
= \frac{[1]}{\lambda_j} + \sum\limits_{k=1}^\infty 
\frac{\lambda_j^{k-1}}{k!} \, \big[(\log |f_j|)^k], 
$$  
la convergence de la s\'erie figurant au membre de droite 
\'etant entendue ici au sens faible dans $\mathscr D_{0,0}(U)$.  
On en d\'eduit 
\begin{equation}\label{relationscourants} 
\begin{split} 
& T_{\lambda_1}^{f_1} = d'd'' \Big[
\frac{|f_1|^{\lambda_1}}{\lambda_1}\Big] 
= \sum\limits_{k=1}^\infty 
\frac{\lambda_1^{k-1}}{k!} \, d'd''\, \big[(\log |f_1|)^k\big]\\ 
& 
d'\, \Big[\frac{|f_2|^{\lambda_2}}{\lambda_2}\Big]  
= \sum\limits_{k=1}^\infty 
\frac{\lambda_2^{k-1}}{k!}\, d'\big[(\log |f_2|)^k\big].
\end{split}  
\end{equation}  
On observe que chacune des contributions 
$$
(\lambda_1,\lambda_2) \longmapsto 
- \int_{U} d'\Big(\frac{|f_2|^{\lambda_2}}{\lambda_2}\Big)
\wedge d'd'' \Big(\frac{|f_1|^{\lambda_1}}{\lambda_1}\Big)\wedge \varphi_\iota\, d''\omega
$$
admet une limite lorsque $(\lambda_1,\lambda_2)$ tend vers $0$. 
Dans le premier cas, on a 
\begin{multline}\label{limite-cas1} 
\lim\limits_{\stackrel{(\lambda_1,\lambda_2) 
\rightarrow (0,0)}{\lambda_1,\lambda_2 \in \R^*}}
\Big(- \int_{U} d'\Big(\frac{|f_2|^{\lambda_2}}{\lambda_2}\Big)
\wedge d'd'' \Big(\frac{|f_1|^{\lambda_1}}{\lambda_1}\Big)\wedge \varphi_\iota\, d''\omega\Big) = \\
\lim\limits_{\stackrel{(\lambda_1,\lambda_2) 
\rightarrow (0,0)}{\lambda_1,\lambda_2 \in \R^*}}
\Big( -\lambda_1\, 
\sum\limits_{k=1}^\infty 
\frac{\lambda_2^{k-1}}{k!} 
\Big\langle
d'\, \big[(\log |f_2|)^k\big]\,,\, 
\varphi\, 
|f_1|^{\lambda_1}\, d'(\varphi \log |f_1|) \wedge d''(\varphi \log |f_1|)
\wedge \varphi_\iota \, d''\omega\Big\rangle\Big) = 0 
\end{multline}  
d'apr\`es le th\'eor\`eme de convergence domin\'ee. Dans le second cas, on a pour les m\^emes raisons  
\begin{equation}\label{limite-cas2} 
\begin{split}
&\qquad\lim\limits_{\stackrel{(\lambda_1,\lambda_2) 
\rightarrow (0,0)}{\lambda_1,\lambda_2 \in \R^*}}
\Big(- \int_{U} d'\Big(\frac{|f_2|^{\lambda_2}}{\lambda_2}\Big)
\wedge d'd'' \Big(\frac{|f_1|^{\lambda_1}}{\lambda_1}\Big)\wedge \varphi_\iota\, d''\omega\Big) \\ 
&\qquad= \lim\limits_{\stackrel{(\lambda_1,\lambda_2) 
\rightarrow (0,0)}{\lambda_1,\lambda_2 \in \R^*}}
\Big( 
\sum\limits_{k=1}^\infty 
\frac{\lambda_1^{k-1}}{k!} 
\Big\langle 
d'd''\, \big[(\log |f_1|)^k\big]\,,\, 
\varphi_\iota\, |f_2|^{\lambda_2}\, 
d'(\psi \log |f_2|)\wedge d''\omega\Big\rangle\Big) \\ 
&\qquad= \lim\limits_{\stackrel{\lambda_2 \rightarrow 0}{\lambda_2 \in \R^*}} 
\Big\langle [{\rm div}(f_1)]\,,\, 
\varphi_\iota\, |f_2|^{\lambda_2}\, 
d'(\psi \log |f_2|)\wedge d''\omega\Big\rangle\\
&\qquad= \lim\limits_{\stackrel{\lambda_2 \rightarrow 0}{\lambda_2 \in \R^*}} 
\Big\langle [{\rm div}(f_1)]\,,\, 
\varphi_\iota\, d'\, \Big(\frac{\psi |f_2|^{\lambda_2}}{\lambda_2}\Big)  
\wedge d''\omega\Big\rangle \\
&\qquad= \lim\limits_{\stackrel{\lambda_2 \rightarrow 0}{\lambda_2 \in \R^*}} 
\Big( 
\Big\langle 
\Big[\frac{|f_2|^{\lambda_2}}{\lambda_2}\Big]\, [{\rm div}(f_1)]\,,\, 
\varphi_\iota \, d'd''\omega \Big\rangle + \Big\langle 
\Big[\frac{|f_2|^{\lambda_2}}{\lambda_2}\Big]\, [{\rm div}(f_1)]\,,\, 
d'(\varphi_\iota)\wedge d''\omega \Big\rangle\Big). 
\end{split}
\end{equation} 
Or, dans le premier cas, on a, puisque le support de 
$\varphi_\iota$ ne rencontre pas $Z_1$, que pour tout 
$\lambda_2$ dans $\R^*$ 
$$
\Big\langle 
\Big[\frac{|f_2|^{\lambda_2}}{\lambda_2}\Big]\, [{\rm div}(f_1)]\,,\, 
\varphi_\iota \, d'd''\omega \Big\rangle + \Big\langle 
\Big[\frac{|f_2|^{\lambda_2}}{\lambda_2}\Big]\, [{\rm div}(f_1)]\,,\, 
d'(\varphi_\iota)\wedge d''\omega \Big\rangle = 0.  
$$
En tenant donc compte de \eqref{limite-cas1} et de \eqref{limite-cas2}, on constate 
que la limite lorsque $(\lambda_1,\lambda_2)$ tend vers $(0,0)$ dans 
$(\R^*)^2$ de l'expression 
$$
- \int_{U} d'\Big(\frac{|f_2|^{\lambda_2}}{\lambda_2}\Big)
\wedge d'd'' \Big(\frac{|f_1|^{\lambda_1}}{\lambda_1}\Big)\wedge  d''\omega:= 
- \sum\limits_{\iota \in I} \int_{U} d'\Big(\frac{|f_2|^{\lambda_2}}{\lambda_2}\Big)
\wedge d'd'' \Big(\frac{|f_1|^{\lambda_1}}{\lambda_1}\Big)\wedge \varphi_\iota\, d''\omega
$$
est \'egale \`a la limite lorsque $\lambda$ tend vers $0$ de 
$$
\sum\limits_{\iota \in I} \Big(
\Big\langle 
\Big[\frac{|f_2|^{\lambda}}{\lambda}\Big]\, [{\rm div}(f_1)]\,,\, 
\varphi_\iota \, d'd''\omega \Big\rangle + \Big\langle 
\Big[\frac{|f_2|^{\lambda}}{\lambda}\Big]\, [{\rm div}(f_1)]\,,\, 
d'\varphi_\iota\wedge d''\omega \Big\rangle\Big), 
$$
c'est-\`a-dire, puisque $\sum_{\iota \in I} \varphi_\iota =1$ et que par cons\'equent 
$\sum_{\iota \in I} d'\varphi_\iota =0$, \`a l'action sur $\omega$ du courant 
$[{\rm div}(f_1)]\wedge [{\rm div}(f_2)]$ tel qu'il a \'et\'e d\'efini avant l'\'enonc\'e de la proposition 
\ref{prop2fonctions}.  
\end{proof} 

\begin{remark}\label{rem2fonctions} {\rm On v\'erifie que $T_{\lambda_1,\lambda_2}^{f_1,f_2} = T_{\lambda_2,\lambda_1}^{f_2,f_1}$ pour tout couple 
$(\lambda_1,\lambda_2)$ de $(\R^*)^2$ et toute paire de fonctions m\'eromorphes $(f_1,f_2)$. On commence par  
d\'efinir dans $U_{f_1}\cup U_{f_2}$ les deux courants $[|f_2|^{\lambda}/\lambda_2] \, T^{\lambda_1}_{f_1}$ et 
$[|f_1|^{\lambda_1}/\lambda_1]\, T^{\lambda_2}_{f_2}$ de la mani\`ere suivante 
(sym\'etrique). Par exemple  
\begin{multline*} 
\forall\, \omega \in \mathscr A^{n-1,n-1}_c(U),
\quad  
\Big\langle \Big[\frac{|f_2|^{\lambda_2}}{\lambda_2}\Big]\, 
T_{\lambda_1}^{f_1}\,,\, \omega\Big\rangle := \\ 
\begin{cases} \Big\langle 
\Big[\frac{|f_2|^{\lambda_2}}{\lambda_2}\Big]\,,\, 
d'd''\Big(\frac{|f_1|^{\lambda_1}}{\lambda_1}\Big)\wedge \omega \Big\rangle\ {\rm si}\ {\rm Supp}\, \omega \subset U_{f_1} \\ \\
\Big\langle T_{\lambda_1}^{f_1}\,,\, \frac{|f_2|^{\lambda_2}}{\lambda_2}\, \omega\Big\rangle \ {\rm si}\  {\rm Supp}(\omega) \subset U_{f_2} 
\end{cases}     
\end{multline*}  
(dans le premier cas, on utilise le lemme 4.6.1 de \cite{ChLD}). 
Les deux d\'efinitions alternatives propos\'ees se recollent ici dans $U_{f_1}\cap U_{f_2}$. On d\'efinit alors les 
deux courants suivants dans $U_{f_1}\cup U_{f_2}$~:  
$$ 
d'\Big[\frac{|f_2|^{\lambda_2}}{\lambda_2}\Big] 
\wedge T_{\lambda_1}^{f_1} := d'\Big( 
\frac{|f_2|^{\lambda_2}}{\lambda_2}\, T_{\lambda_1}^{f_1}\Big)\ ,\  
d'\Big[\frac{|f_1|^{\lambda_1}}{\lambda_1}\Big] 
\wedge T_{\lambda_2}^{f_2} := d'\Big( 
\frac{|f_1|^{\lambda_1}}{\lambda_1}\, T_{\lambda_2}^{f_2}\Big).  
$$
On observe que le courant $\mu_{\lambda_1,\lambda_2}$ d\'efini comme la diff\'erence 
de ces deux courants est un courant $d''$-ferm\'e dans $U_{f_1}\cup U_{f_2}$~: en effet, on a par exemple, 
si $\alpha \in \mathscr A^{n-2,n-2}_c(U_{f_1})$, 
\begin{multline}\label{calculsremarque}  
\Big\langle d'\Big[\frac{|f_2|^{\lambda_2}}{\lambda_2}\Big] 
\wedge T_{\lambda_1}^{f_1}\,,\, d''\alpha 
\Big\rangle = - \Big\langle 
\Big[\frac{|f_2|^{\lambda_2}}{\lambda_2}\Big]\,,\, 
d'd'' \Big(\frac{|f_1|^{\lambda_1}}{\lambda_1}\Big)\wedge d'd''\alpha \Big\rangle \\
= - \Big\langle 
\Big[\frac{|f_2|^{\lambda_2}}{\lambda_2}\Big]\,,\, d'\Big( 
d''\Big(\frac{|f_1|^{\lambda_1}}{\lambda_1}\Big)\wedge d'd''\alpha\Big)\Big\rangle =  
\Big\langle d'\Big[\frac{|f_2|^{\lambda_2}}{\lambda_2}\Big]\,,\, 
d''\Big(\frac{|f_1|^{\lambda_1}}{\lambda_1}\Big)\wedge d'd''\alpha\Big\rangle \\ 
= \Big\langle d'\Big[\frac{|f_2|^{\lambda_2}}{\lambda_2}\Big]\,,\, 
d''\Big(\frac{|f_1|^{\lambda_1}}{f_1}\, d'd''\alpha\Big) \Big\rangle  =  
\Big\langle d'd'' \Big[\frac{|f_2|^{\lambda_2}}{\lambda_2}\Big]\,,\, 
\frac{|f_1|^{\lambda_1}}{\lambda_1}\, d'd''\alpha \Big\rangle \\ 
=  \Big\langle d'\Big[\frac{|f_1|^{\lambda_1}}{\lambda_1}\Big] 
\wedge T_{\lambda_2}^{f_2}\,,\, d''\alpha 
\Big\rangle~;   
\end{multline} 
le calcul est sym\'etrique dans $U_{f_2}$. 
On rappelle que l'on a par d\'efinition 
\begin{multline*} 
\forall\, \lambda_1,\lambda_2 \in \R^*,\quad  \forall\, \omega \in \mathscr A_c^{n-2,n-2}(U),  
\quad \Big\langle T_{\lambda_1,\lambda_2}^{f_1,f_2} 
- T_{\lambda_2,\lambda_1}^{f_2,f_1}\,,\,\omega \Big\rangle = 
	\sum\limits_{\iota \in I} \big\langle \mu_{\lambda_1,\lambda_2}, 
	\varphi_\iota \, d''\omega 
	\big\rangle \\ 	
= - \sum\limits_{\iota \in I}
\big\langle \mu_{\lambda_1,\lambda_2}\,,\, 
d'' \varphi_\iota \wedge \omega \rangle = 
\Big\langle \mu_{\lambda_1,\lambda_2}\,,\, \Big(\sum_{\iota \in I} d''\varphi_\iota)\Big) \wedge \omega\Big\rangle = 0. 
\end{multline*} 
On conviendra de noter par la suite pour tous $\lambda_1,\lambda_2\in \R^*$~:
\begin{equation}\label{notation}
T_{\lambda_1,\lambda_2}^{f_1,f_2} = d'd''\Big[ \frac{|f_2|^{\lambda_2}}{\lambda_2}\Big]
\wedge d'd''\Big[ \frac{|f_1|^{\lambda_1}}{\lambda_1}\Big], 
\end{equation}
l'ordre des deux facteurs \'etant ici sans importance, ce qui en coh\'erent avec le fait 
qu'il s'agisse (formellement) d'un produit de $2$-courants. 
On a de plus 
$$
[{\rm div}(f_1)]\wedge [{\rm div}(f_2)]= 
[{\rm div}(f_2)] \wedge [{\rm div}(f_1)].
$$ 
}
\end{remark}
\noindent
On peut envisager le cas o\`u l'on a trois fonctions m\'eromorphes r\'eguli\`eres sans diviseurs de z\'ero dans un ouvert $U$ de $X$. 
On rappelle que l'action du courant $[{\rm div}(f_1)]\wedge [{\rm div}(f_2)]$ sur 
$\omega \in \mathscr A_c^{n-2,n-2}(U)$ est d\'efinie par 
\begin{multline*} 
\sum_{\kappa}\Big( - 
\int_{\iota_{1,\kappa}^{-1}(U_{f_2})\cap Z_{1,\kappa}} 
d'(\chi_\kappa\, \log| \iota_{1,\kappa}^*(f_2)|)\wedge d''\big(\iota_{1,\kappa}^* (\omega)\big) \Big) \\
= \sum_\kappa \Big(-\int_{\iota_{1,\kappa}^{-1}(U)\cap Z_{1,\kappa}}
d'(\chi_\kappa\, \log| \iota_{1,\kappa}^*(f_2)|)\wedge d''\big(\iota_{1,\kappa}^* (\omega)\big) \Big),     
\end{multline*} 
o\`u $\chi_\kappa$ est une fonction lisse sur le $\K$-espace analytique $Z_1$ 
identiquent \'egale \`a $1$ au voisinage du support de 
la $(n-2,n-1)$-forme lisse $d''\big(\iota_{1,\kappa}^* (\omega)$ 
et de support inclus dans le plus grand ouvert de $Z_{1,\kappa}\cap \iota_{1,\kappa}^{-1}(U)$ dans lequel 
la fonction m\'eromorphe r\'eguli\`ere $\iota_{1,\kappa}^*(f_2)$ ne s'annule pas. 
Pour $\lambda \in \R^*$, on utilise dans chaque ouvert 
$\iota_{1,\kappa}^{-1}(U)$ le lemme 4.6.1 de \cite{ChLD} pour justifier la d\'efinition du $(3,3)$-courant 
$$
\big([{\rm div}(f_1)]\wedge [{\rm div}(f_2)]\big) \wedge [{\rm div}(f_3)]
$$  
de la mani\`ere suivante. On rappelle d'abord (voir \eqref{prod2diviseurs}) que 
$$
[{\rm div}(f_1)] \wedge [{\rm div}(f_2)] := 
\sum_\kappa \pm (\iota_{1,\kappa})_* \Big( 
d'd'' \big[\log |\iota_{1,\kappa}^* (f_2)|\big]\Big) = 
\sum_\kappa \pm (\iota_{1,\kappa})_* \Big( 
[{\rm div}(\iota_{1,\kappa}^*(f_2)]\Big) 
$$
d'apr\`es la formule de Lelong-Poincar\'e appliqu\'ee \`a la fonction m\'eromorphe 
r\'eguli\`ere $\iota_{1,\kappa}^*(f_2)$ dans l'ouvert $\iota_{1,\kappa}^{-1}(U)$ du $\K$-espace analytique 
(de dimension $n-1$) $Z_{1,\kappa}$.  
On d\'efinit alors (en respectant pour l'instant l'ordre) 
$$ 
[{\rm div}(f_1)]\wedge [{\rm div}(f_2)] \wedge [{\rm div}(f_3)]
:= \sum_\kappa \pm\,  
(\iota_{1,\kappa})_*\Big( [{\rm div}(\iota_{1,\kappa}^* (f_2))] \wedge 
[{\rm div}(\iota_{1,\kappa}^* (f_3))]\Big). 
$$
Plus g\'en\'eralement, l'on aboutit \`a la d\'efinition suivante~: 
\begin{definition}{\rm 
Si $f_1,...,f_p$ sont $p$ fonctions r\'eguli\`erement m\'eromorphes dans $U$ telles que 
pour toute liste d'indices
$1\leq j_1 < \dots < j_k \leq p$ (avec $k=1,...,p$) on a
$$
{\rm codim}_{U}
\Big(\bigcap_{\ell =1}^k {\rm Supp}\, \big([{\rm div}(f_{j_\ell})]\big)\Big) \geq  k, 
$$ 
on d\'efinit inductivement pour tout $k$ entre $2$ et $p$  
\begin{equation}\label{defpproduit}  
[{\rm div}(f_1)]\wedge [{\rm div}(f_2] \wedge\cdots \wedge  [{\rm div}(f_{k})]
:= \sum_\kappa \pm\,  
(\iota_{1,\kappa})_*\Big( [{\rm div}(\iota_{1,\kappa}^* (f_2))] \wedge \cdots \wedge  
[{\rm div}(\iota_{1,\kappa}^* (f_k))]\Big). 
\end{equation}
}
\end{definition}
On se doit pour l'instant dans cette construction de respecter l'ordre dans lequel sont prises les fonctions 
r\'eguli\`erement m\'eromorphes $f_j$.  

\begin{theorem}\label{theorempfonctions}
Soient $f_1,...,f_p$ ($p\geq 1$) des fonctions m\'eromorphes r\'eguli\`eres dans un ouvert
$U$ d'un bon $\K$-espace analytique de Berkovich de dimension $n$ sans bord.
On fait l'hypoth\`ese que pour toute liste d'indices
$1\leq j_1 < \dots < j_k \leq p$ (avec $k=1,...,p$) on a
$$
{\rm codim}_{U} \Big(\bigcap_{\ell =1}^k {\rm Supp}\, \big([{\rm div}(f_{j_\ell})]\big)\Big) \geq  k.
$$
On d\'efinit un courant
$T_{\lambda_1,...,\lambda_p}^{f_1,...,f_p} \in \mathscr{D}_{p,p}(U,\R)$ en posant
$$
\forall\, \omega \in
\mathscr{A}^{n-p,n-p}_c(U),\quad
\langle T_{\lambda_1,...,\lambda_p}^{f_1,...,f_p},\omega \rangle
= - \int_{U} d'\Big(\frac{|f_p|^{\lambda_p}}{\lambda_p}\Big) 
\wedge \Big(\bigwedge\limits_{j=1}^{p-1} d'd'' \Big(\frac{|f_j|^{\lambda_j}}{\lambda_j}\Big)\Big)\wedge d''\omega
$$
apr\`es avoir d\'ecoup\'e cette int\'egrale suivant un partitionnement de l'unit\'e 
$1=\sum_\iota \varphi_\iota$ subordonn\'ee au support de $d''\omega$ afin de lui donner un sens et d'en assurer 
la convergence. 
Alors, ce courant ne d\'epend pas de l'ordre dans lequel sont prises les fonctions
$f_1,...,f_p$ et l'on a donc pour toute permutation $\sigma$ de $\{1,...,p\}$ 
\begin{equation}\label{produitgen}
\lim\limits_{\stackrel{(\lambda_1,...,\lambda_p) \rightarrow (0,...,0)}{\lambda_1 \not=0,...,  \lambda_p
\not=0}} T_{\lambda_1,...,\lambda_p}^{f_1,...,f_p} = [{\rm div}(f_1)] \wedge\dots \wedge  [{\rm div}(f_p)] = {[\rm div} (f_{\sigma(1)})] 
\wedge \dots \wedge [{\rm div}(f_{\sigma(p)})], 
\end{equation}
o\`u l'op\'eration multiplicative entre courants $[{\rm div}(f_j)]$ 
(respectant un ordre a priori impos\'e) a \'et\'e introduite pr\'ealablement. 
\end{theorem}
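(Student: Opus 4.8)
The plan is to argue by induction on $p$, taking the Lelong--Poincar\'e formula recalled at the beginning of the section as the case $p=1$ and Proposition~\ref{prop2fonctions} as the case $p=2$. The inductive hypothesis I would carry is the entire statement --- existence of the limit, independence of the order, and identification with the iterated divisor product --- for any family of $p-1$ meromorphic regular functions, on an \emph{arbitrary} good $\K$-Berkovich space without boundary, satisfying the codimension conditions. The key structural observation is that the integrand defining $T_{\lambda_1,\dots,\lambda_p}^{f_1,\dots,f_p}$ is self-similar: peeling off the first factor $d'd''[|f_1|^{\lambda_1}/\lambda_1]$ leaves, for the functions $f_2,\dots,f_p$, an object of exactly the same shape, with $f_p$ still carrying the single $d'$.

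First I would establish the symmetry at finite parameters, namely $T_{\lambda_1,\dots,\lambda_p}^{f_1,\dots,f_p}=T_{\lambda_{\sigma(1)},\dots,\lambda_{\sigma(p)}}^{f_{\sigma(1)},\dots,f_{\sigma(p)}}$ for every permutation $\sigma$ and every $(\lambda_1,\dots,\lambda_p)\in(\R^*)^p$. Since the factors $d'd''[|f_j|^{\lambda_j}/\lambda_j]$ are $(1,1)$-forms of even total degree they commute, so invariance under permutations of $\{1,\dots,p-1\}$ is automatic; the only genuine point is to exchange the distinguished index $p$ with some $i<p$. This is precisely the computation of Remark~\ref{rem2fonctions}, the remaining factors $\bigwedge_{j\neq i,p}d'd''[|f_j|^{\lambda_j}/\lambda_j]$ merely accompanying the calculation as spectators: on the cover $\{U_{f_i},U_{f_p}\}$ one forms the analogue of $\mu_{\lambda_i,\lambda_p}$, checks it is $d''$-closed by the integration by parts of~\eqref{calculsremarque}, and concludes that the two orderings pair identically against $\omega$ because $\sum_\iota d''\varphi_\iota=0$.

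Next I would identify the limit for a fixed order. Decomposing $[{\rm div}(f_1)]=\sum_\kappa\pm[{\rm div}(f_{1,\kappa})]$ after \cite[lemme 4.6.4]{ChLD}, the first factor should be recognized, in the limit, as $[{\rm div}(f_1)]$ exactly as in the case $p=1$; through the pushforward presentation~\eqref{prod2diviseurs} this localises the whole pairing onto $Z_1=\bigsqcup_\kappa Z_{1,\kappa}$, so that testing against $\omega$ reduces to integrating over each $Z_{1,\kappa}$ the pullback $\iota_{1,\kappa}^*$ of the remaining object, which is nothing but $T_{\lambda_2,\dots,\lambda_p}^{\iota_{1,\kappa}^*f_2,\dots,\iota_{1,\kappa}^*f_p}$ on the $(n-1)$-dimensional good Berkovich space $Z_{1,\kappa}$ without boundary. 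The codimension hypotheses for the full family restrict to exactly those required on $Z_{1,\kappa}$: from ${\rm codim}_U\big(Z_1\cap Z_{j_1}\cap\cdots\cap Z_{j_k}\big)\geq k+1$ one gets ${\rm codim}_{Z_{1,\kappa}}\big(Z_{j_1}\cap\cdots\cap Z_{j_k}\cap Z_{1,\kappa}\big)\geq k$, and in particular each $\iota_{1,\kappa}^*f_j$ is a meromorphic non-zero-divisor there. The inductive hypothesis then gives $[{\rm div}(\iota_{1,\kappa}^*f_2)]\wedge\cdots\wedge[{\rm div}(\iota_{1,\kappa}^*f_p)]$; pushing forward by $(\iota_{1,\kappa})_*$ and summing the signs over $\kappa$ reproduces the right-hand side of~\eqref{defpproduit}. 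Combined with the finite-parameter symmetry, this shows the limit is the same for every order, so all permuted products coincide and~\eqref{produitgen} follows.

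The main obstacle is to replace this iterated reasoning by the genuine joint limit $(\lambda_1,\dots,\lambda_p)\to(0,\dots,0)$. As in Proposition~\ref{prop2fonctions} I would cut the integral by a partition of unity $1=\sum_\iota\varphi_\iota$ subordinate to the cover $\{U_{f_j}\}_j$ of ${\rm Supp}(d''\omega)$ --- which avoids $\bigcap_{j=1}^p Z_j$ by the codimension hypothesis and \cite[lemme 3.2.5]{ChLD} --- and treat each piece through the index $j_0$ with ${\rm Supp}\,\varphi_\iota\subset U_{f_{j_0}}$. If $j_0=1$ the first factor is $O(\lambda_1)$ since $d'd''\log|f_1|=0$ on $U_{f_1}$, and the piece vanishes as in~\eqref{limite-cas1}; otherwise $f_1$ may still vanish, the first factor is kept as $T_{\lambda_1}^{f_1}$, and the piece is treated as in~\eqref{limite-cas2}, the top-level partition inducing on each $Z_{1,\kappa}$ precisely the partition required by the $(p-1)$-hypothesis. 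The two-term expressions then reassemble, using $\sum_\iota\varphi_\iota=1$ and $\sum_\iota d'\varphi_\iota=0$ on ${\rm Supp}(d''\omega)$, into the un-partitioned pairing whose limit is computed above. The quantitative control is furnished by the uniformly convergent expansions~\eqref{relationscourants}, which isolate the singular $1/\lambda_j$ terms and exhibit the remaining series as uniformly bounded on compact sets, so that the multiparameter limit may be taken by dominated convergence. The delicate bookkeeping will be tracking the signs coming from the decomposition into the $f_{1,\kappa}$ and checking that the interpretation of the restricted object $T_{\lambda_2,\dots,\lambda_p}^{\iota_{1,\kappa}^*f_2,\dots,\iota_{1,\kappa}^*f_p}$ is compatible, via Lelong--Poincar\'e on $Z_{1,\kappa}$, with the pushforward definition~\eqref{defpproduit} of the product.
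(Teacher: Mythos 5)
Votre sch\'ema est correct et repose sur les m\^emes ingr\'edients que la preuve du papier~: r\'ecurrence sur $p$ \`a partir de la proposition \ref{prop2fonctions}, partition de l'unit\'e subordonn\'ee au recouvrement de ${\rm Supp}(d''\omega)$ par les $U_{f_j}$ (l\'egitim\'ee par \cite[lemme 3.2.5]{ChLD}), d\'eveloppements \eqref{relationscourants} pour isoler les termes en $1/\lambda_j$ et obtenir la limite inconditionnelle, et r\'eduction de la sym\'etrie \`a une seule transposition trait\'ee par le courant $d''$-ferm\'e $\mu$ de la remarque \ref{rem2fonctions} avec facteurs spectateurs. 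La diff\'erence r\'eelle est le sens de la r\'ecurrence pour identifier la limite~: le papier d\'etache le facteur $f_p$ (celui qui porte le $d'$ simple), reste sur $U$, applique l'hypoth\`ese de r\'ecurrence \`a $T^{f_1,\dots,f_{p-1}}_{\lambda_1,\dots,\lambda_{p-1}}$ via \eqref{expressionpfonctionscas2}, puis identifie $d'd''\big([\log|f_p|]\,[{\rm div}(f_1)]\wedge\cdots\wedge[{\rm div}(f_{p-1})]\big)$ avec le produit complet par Lelong--Poincar\'e sur $Z_1\cap\cdots\cap Z_{p-1}$~; vous d\'etachez au contraire le facteur $f_1$ et descendez sur les hypersurfaces $Z_{1,\kappa}$, ce qui colle directement \`a la d\'efinition inductive \eqref{defpproduit} et \`a \eqref{prod2diviseurs} et rend l'identification finale imm\'ediate, au prix de devoir donner un sens au courant multi-param\`etres restreint $T^{\iota_{1,\kappa}^*f_2,\dots,\iota_{1,\kappa}^*f_p}_{\lambda_2,\dots,\lambda_p}$ sur les espaces $Z_{1,\kappa}$ (votre v\'erification des hypoth\`eses de codimension induites et du recollement de la partition restreinte est correcte), l\`a o\`u le papier n'a \`a construire que les courants auxiliaires \eqref{produitaux} sur $U$ lui-m\^eme. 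Deux points de vigilance mineurs~: l'affirmation que les facteurs $d'd''[|f_j|^{\lambda_j}/\lambda_j]$ pour $j\leq p-1$ \og commutent automatiquement\fg\ parce qu'ils sont de degr\'e pair est un peu rapide --- le produit est d\'efini par une proc\'edure ordonn\'ee, et c'est bien l'hypoth\`ese de r\'ecurrence (ind\'ependance de l'ordre pour $p-1$ fonctions) qui couvre les permutations fixant $p$, exactement comme dans le papier~; et votre cas \og $j_0=1$\fg\ (contribution en $O(\lambda_1)$ via \eqref{limite-cas1}) doit \^etre \'enonc\'e pour tout $j_0$ tel que $f_{j_0}$ porte un $d'd''$, le cas $j_0=p$ \'etant celui o\`u seule la localisation sur $Z_{1,\kappa}$ op\`ere, mais votre d\'ecoupage couvre bien toutes les pi\`eces.
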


\begin{proof} Le r\'esultat est acquis pour $p=2$ d'apr\`es la proposition \ref{prop2fonctions}
et la remarque \ref{rem2fonctions}. On le suppose donc acquis (hypoth\`ese de r\'ecurrence) pour $p-1$ fonctions 
m\'eromorphes ($p\geq 3$). 
On note, pour $j=1,...,p$, $Z_j$  les sous-espaces analytiques ferm\'es (au sens de Zariski) de $U$ de codimension 
$1$ d\'efinis comme les supports 
des courants $[{\rm div}(f_j)]$. Pour chaque $j=1,...,p$, on note $\widehat{Z}_j$ l'intersection des sous-ensembles 
$\K$-analytiques $Z_\ell$ pour $\ell=1,...,j-1,j+1,...,p$.
Si $\widehat{Z_j}$ est non vide (${\rm codim}_{U}
\widehat{Z_j} < +\infty$), on a n\'ecessairement ${\rm codim}_{U} \widehat {Z_j} = p-1$ car cette codimension est 
minor\'ee par $p-1$ par hypoth\`eses et que $\widehat{Z_j}$ 
est d\'efini comme lieu des z\'eros communs d'exactement $p-1$ \'equations. On peut donc consid\'erer
$\widehat {Z_j}$ comme un $\K$-espace analytique de dimension $n-p+1$. Soit $\omega \in \mathscr A^{n-p,n-p}_c(U)$. 
Du fait de l'hypoth\`ese que pour toute liste d'indices
$1\leq j_1 < \dots < j_k \leq p$ (avec $k=1,...,p$)
$$
{\rm codim}_{U} \Big(\bigcap_{\ell =1}^k {\rm Supp}\, \big([{\rm div}(f_{j_\ell})]\big)\Big) \geq  k,
$$ il r\'esulte du lemme 3.2.5 de 
\cite{ChLD} et de la d\'efinition de la dimension locale $d_{\K}(x)$ ($x\in U)$ 
comme le minimum des dimensions $\K$-analytiques des domaines $\K$-affinoides qui contiennent $x$ (voir par exemple 
\cite{Duc07}, d\'efinition 1.16), que le support de la 
forme diff\'erentielle $d''\omega$ de bidegr\'e $(n-p,n-p+1)$ 
ne rencontre pas le sous-ensemble de Zariski $Z_1\cap Z_2\cdots\cap Z_p$.      
D'apr\`es le lemme de partitionnement de l'unit\'e \cite[proposition 3.3.6]{ChLD}, 
on peut introduire dans $U$ une partition de l'unit\'e $1= \sum_\iota \varphi_\iota$ (par des fonctions lisses 
\`a support compact), subordonn\'ee au recouvrement du 
compact ${\rm Supp}\, (d''\omega)$ de $U$ par les $p$ ouverts $U_{f_1}$, $U_{f_2}$,..., $U_{f_p}$. 
\noindent 
Dans un premier temps, \'etant donn\'es $p-1$ fonctions m\'eromorphiquement 
r\'eguli\`eres $g_1,...,g_{p-1}$ satisfaisant la condition 
$$
{\rm codim}_{U} \Big(\bigcap_{\ell =1}^k {\rm Supp}\, \big([{\rm div}(g_{j_\ell})]\big)\Big) \geq  k 
$$
pour tout $k$ entre $1$ et $p-1$ et tous $1\leq j_1 < \cdots <j_k\leq p-1$, 
il nous faut donner un sens (\`a l'aide du lemme 4.6.1 de 
\cite{ChLD}) au courant  
$$
\Big[\frac{|g_1|^{\lambda_1}}{\lambda_1}\Big] \, 
T_{\lambda_2,...,\lambda_{p-1}}^{g_2,...,g_{p-1}} \in \mathscr D_{p-2,p-2}(U). 
$$
On proc\`ede pour cela ainsi~: \'etant donn\'e
$\eta \in \mathscr A_c^{n-p+2,n-p+2}(U)$, on introduit, puisque le support 
de la forme $\eta$ ne rencontre pas $Z_{g_1} \cap \dots \cap Z_{g_{p-1}}$ d'apr\`es le lemme 3.2.5 de \cite{ChLD}, 
une partition de l'unit\'e $\sum_\iota \tau_\iota$ du 
support de 
$\eta$ subordonn\'ee au recouvrement de ce support par les ouverts 
$U_{g_\ell}$, $\ell=1,...,p-1$. On d\'efinit alors 
\begin{multline}\label{produitaux} 
\Big\langle \Big[\frac{|g_1|^{\lambda_1}}{\lambda_1}\Big] \, 
T_{\lambda_2,...,\lambda_{p-1}}^{g_2,...,g_{p-1}}\,,\, 
\tau_\iota\, \eta \Big\rangle \\
:= 
\begin{cases} 
\Big\langle T_{\lambda_2,...,\lambda_{p-1}}^{g_2,...,g_{p-1}}\,,\, 
\Big(\displaystyle{\frac{|g_1|^{\lambda_1}}{\lambda_1}}\Big)\, \tau_\iota \, \eta\Big\rangle 
\ {\rm si}\ {\rm Supp}(\tau_\iota)\subset U_{g_1} \\ \\
\Big\langle \Big[\displaystyle{\frac{|g_1|^{\lambda_1}}{\lambda_1}}\Big] \, 
T_{\lambda_2,...,\widehat{\lambda_{j_0}},\cdots \lambda_{p-1}}^{g_2,...,
\widehat{g_{j_0}},...,g_{p-1}}\,,\, d'd''\Big(\frac{|g_{j_0}|^{\lambda_{j_0}}}{\lambda_{j_0}}\Big)\wedge \tau_\iota\, 
\eta \Big\rangle\ 
{\rm si}\ {\rm Supp}(\tau_\iota) \subset U_{g_{j_0}}.
\end{cases}  
\end{multline} 
La seconde alternative se traite inductivement et la construction finit 
par se conclure lorsque $p=2$ \`a l'application du lemme 4.6.1 de 
\cite{ChLD}. La construction montre aussi (par r\'ecurrence sur le nombre 
de fonctions $g_j$ en jeu) que la limite 
\begin{equation}\label{limitepfonctions} 
\lim\limits_{\stackrel{(\lambda_1,...,\lambda_{p-1}) \rightarrow (0,...,0)}
{\lambda_1,...,\lambda_{p-1}\in \R^*}} 
\Big(\Big[\frac{|g_1|^{\lambda_1}}{\lambda_1}\Big] \, 
T_{\lambda_2,...,\lambda_{p-1}}^{g_2,...,g_{p-1}}\Big) 
\end{equation} 
existe inconditionnellement dans $\mathscr D_{p-2,p-2}(U)$ (au sens faible de la convergence des courants).  
\vskip 1mm
\noindent 
On est maintenant en mesure de donner le sens suivant \`a l'expression 
\begin{equation}\label{expressionpfonctions} 
- \int_{U} d'\Big(\frac{|f_p|^{\lambda_p}}{\lambda_p}\Big) 
\wedge \Big(\bigwedge\limits_{j=1}^{p-1} d'd'' \Big(\frac{|f_j|^{\lambda_j}}{\lambda_j}\Big)\Big)\wedge \varphi_\iota 
d''\omega
\end{equation} 
suivant que le support de $\varphi_\iota$ est inclus dans l'un des $U_{f_j}$ pour $j=1,...,p-1$ ou que le support de 
$\varphi_\iota$ est inclus dans $U_{f_p}$. 
\begin{itemize}
\item
Si ${\rm supp}\, \varphi_\iota \subset U_{f_{j_0}}$ pour un indice $j_0$ entre
$1$ et $p-1$, on d\'efinit l'expression \eqref{expressionpfonctions} par 
\begin{multline}\label{expressionpfonctionscas1}
- \int_{U} d'\Big(\frac{|f_p|^{\lambda_p}}{\lambda_p}\Big) 
\wedge \Big(\bigwedge\limits_{j=1}^{p-1} d'd'' \Big(\frac{|f_j|^{\lambda_j}}{\lambda_j}\Big)\Big)\wedge \varphi_\iota 
d''\omega\\ 
:= 
- \Big\langle d'
\Big( \Big[\frac{|f_p|^{\lambda_p}}{\lambda_p}\Big]\, 
T_{\lambda_1,...,\widehat{\lambda_{j_0}},...,\lambda_{p-1}}^{f_1,...,\widehat{f_{j_0}},...,f_{p-1}}\Big)\,,\, d'd''\Big(
\frac{\varphi |f_{j_0}|^{\lambda_{j_0}}}{\lambda_{j_0}}\Big) \wedge \varphi_\iota d''\omega\Big\rangle \\ 
= - \lambda_{j_0} 
\, \Big\langle 
d'
\Big( \Big[\frac{|f_p|^{\lambda_p}}{\lambda_p}\Big]\, 
T_{\lambda_1,...,\widehat{\lambda_{j_0}},...,\lambda_{p-1}}^{f_1,...,\widehat{f_{j_0}},...,f_{p-1}}\Big)\,,\, 
\varphi\, 
|f_{j_0}|^{\lambda_{j_0}}\, d'(\varphi \log |f_{j_0}|) \wedge d''(\varphi \log |f_{j_0}|)
\wedge \varphi_\iota \, d''\omega \Big\rangle,  
\end{multline} 
o\`u $\varphi$ d\'esigne une fonction lisse identiquement \'egale \`a $1$ au voisinage du support de $\varphi_\iota \, 
d''\omega$ et de support inclus dans $U_{f_{j_0}}$. 
\item Si ${\rm Supp}\, \varphi_\iota \subset U_{f_p}$, 
on d\'efinit l'expression \eqref{expressionpfonctions} par 
\begin{multline}\label{expressionpfonctionscas2}
- \int_{U} d'\Big(\frac{|f_p|^{\lambda_p}}{\lambda_p}\Big) 
\wedge \Big(\bigwedge\limits_{j=1}^{p-1} d'd'' \Big(\frac{|f_j|^{\lambda_j}}{\lambda_j}\Big)\Big)\wedge \varphi_\iota 
d''\omega\\ 
:= 
- \Big\langle
T_{\lambda_1,...,\lambda_{p-1}}^{f_1,...,f_{p-1}}\,,\, 
d'\Big( \varphi \frac{|f_p|^{\lambda_p}}{\lambda_p}\Big)\wedge \varphi_\iota\, d''\omega\Big\rangle. 
\end{multline}  
\end{itemize} 
On \'etudie maintenant le comportement de chaque fonction 
\begin{equation}\label{fonctionpfonctions} 
(\lambda_1,...,\lambda_p) \longmapsto 
\int_{U} d'\Big(\frac{|f_p|^{\lambda_p}}{\lambda_p}\Big) 
\wedge \Big(\bigwedge\limits_{j=1}^{p-1} d'd'' \Big(\frac{|f_j|^{\lambda_j}}{\lambda_j}\Big)\Big)\wedge \varphi_\iota 
d''\omega
\end{equation} 
lorsque $(\lambda_1,...,\lambda_p)$ tend vers $(0,...,0)$ inconditionnellement dans $(\R^*)^p$ suivant que l'on se 
trouve dans l'un des deux cas distingu\'es ci-dessus. 
\begin{itemize} 
\item Si ${\rm Supp}(\varphi_\iota)\subset U_{f_{j_0}}$ pour 
$j_0\in \{1,...,p-1\}$, on peut remplacer dans le crochet figurant au membre de droite  
\eqref{expressionpfonctionscas1} l'expression $|f_{j_0}|^{\lambda_{j_0}}$ par 
$$
|f_{j_0}|^{\lambda_{j_0}} = \sum\limits_{k=0}^\infty 
\frac{\lambda_{j_0}^k}{k!} \, (\log |f_{j_0}|)^k. 
$$
Il r\'esulte alors du fait que la limite inconditionnelle \eqref{limitepfonctions} existe au sens faible dans 
$\mathscr D_{p-1,p-1}(U)$ que la fonction 
\eqref{fonctionpfonctions} tend vers $0$ lorsque 
$(\lambda_1,...,\lambda_p)$ tend inconditionnellement vers 
$(0,...,0)$ dans $(\R^*)^p$.
\item Si ${\rm Supp}(\varphi_\iota) \subset U_{f_1}$, on observe 
que l'expression \eqref{expressionpfonctionscas2} s'exprime aussi 
\begin{multline*} 
- \int_{U} d'\Big(\frac{|f_p|^{\lambda_p}}{\lambda_p}\Big) 
\wedge \Big(\bigwedge\limits_{j=1}^{p-1} d'd'' \Big(\frac{|f_j|^{\lambda_j}}{\lambda_j}\Big)\Big)\wedge \varphi_\iota 
d''\omega\\ 
= 
- \sum\limits_{k=1}^\infty 
\frac{\lambda_p^{k-1}}{k!} 
\Big\langle
T_{\lambda_1,...,\lambda_{p-1}}^{f_1,...,f_{p-1}}\,,\, 
(\varphi \log |f_p|)^{k}\, 
d'(\varphi \log|f_p|) \wedge \varphi_\iota\, d''\omega\Big\rangle. 
\end{multline*} 
et la fonction \eqref{fonctionpfonctions} admet d'apr\`es l'hypoth\`ese de r\'ecurrence comme limite inconditionnelle 
lorsque $(\lambda_1,...,\lambda_p)$ tend vers 
$(0,...,0)$ dans $(\R^*)^p$ l'expression 
\begin{multline}\label{limitepfonctionscas2}  
\Big\langle 
[{\rm div}(f_1)]\wedge \cdots \wedge [{\rm div}(f_{p-1})]\,,\, 
\log |f_p| \, \varphi_\iota\, d'd''\omega\Big\rangle 
\\
+ \Big\langle [{\rm div}(f_1)]\wedge \cdots \wedge [{\rm div}(f_{p-1})]\,,\, 
\log|f_p|\, d'\varphi_\iota \wedge d''\omega\Big\rangle \\ 
= \Big\langle \big[\log |f_p|\big]\, T^{f_1,...,f_{p-1}}_{0,...,0}\,,\, 
\varphi_\iota\, d'd''\omega\Big\rangle 
\\
+ \Big\langle \big[\log |f_p|\big]\, 
T^{f_1,...,f_{p-1}}_{0,...,0}\,,\, d'\varphi_\iota \wedge d''\omega\Big\rangle, 
\end{multline} 
o\`u le courant 
$$
\big[\log |f_p|\big]\, T^{f_1,...,f_{p-1}}_{0,...,0} 
= [\log |f_p|]\, \Big([{\rm div}(f_1)]\wedge \cdots \wedge [{\rm div}(f_{p-1})]\Big) 
$$
est bien d\'efini gr\^ace au lemme 4.6.1 de \cite{ChLD} compte-tenu de l'expression inductive du courant 
$[{\rm div}(f_1)] \wedge \cdots \wedge [{\rm div}(f_{p-1})]$ 
(de support par construction m\^eme inclus dans $Z_1 \cap \dots \cap Z_{p-1}$).   
\end{itemize} 
On remarque \'egalement que si ${\rm Supp}\, (\varphi_\iota)\subset U_{j_0}$ avec 
$1\leq j_0\leq p-1$, alors 
\begin{equation}\label{limitepfonctionscas2bis} 
\Big\langle \big[\log |f_p|\big]\, T^{f_1,...,f_{p-1}}_{0,...,0}\,,\, 
\varphi_\iota\, d'd''\omega\Big\rangle 
+ \Big\langle \big[\log |f_p|\big]\, 
T^{f_1,...,f_{p-1}}_{0,...,0}\,,\, d'\varphi_\iota \wedge d''\omega\Big\rangle = 0 
\end{equation} 
compte-tenu du fait que le support du courant 
$$
\big[\log |f_p|\big]\, T^{f_1,...,f_{p-1}}_{0,...,0} 
= [\log |f_p|]\, \Big([{\rm div}(f_1)]\wedge \cdots \wedge [{\rm div}(f_{p-1})]\Big)
$$
est inclus dans le sous-ensemble de Zariski $Z_1\cap \cdots \cap Z_{p-1}$ de codimension $p-1$. 
On en d\'eduit donc que la limite lorsque $(\lambda_1,...,\lambda_p)$ tend vers $(0,...,0)$ incondionnellement dans 
$(\R^{*})^p$ de 
$$
\Big\langle T^{f_1,...,f_p}_{\lambda_1,...,\lambda_p}\,,\, 
\omega \Big\rangle := - 
\sum\limits_{\iota \in I} 
\int_{U} d'\Big(\frac{|f_p|^{\lambda_p}}{\lambda_p}\Big) 
\wedge \Big(\bigwedge\limits_{j=1}^{p-1} d'd'' \Big(\frac{|f_j|^{\lambda_j}}{\lambda_j}\Big)\Big)\wedge \varphi_\iota 
d''\omega
$$
 existe et vaut 
\begin{multline*} 
\sum\limits_{\iota \in I} \Big(\Big\langle 
\big[\log |f_p|\big]\, [{\rm div}(f_1)]\wedge \cdots \wedge [{\rm div}(f_{p-1})]\,,\, 
\varphi_\iota\, d'd''\omega\Big\rangle 
\\
+ \Big\langle \big[\log |f_p|\big]\, 
[{\rm div}(f_1)]\wedge \cdots \wedge [{\rm div}(f_{p-1})]\,,\, 
d'\varphi_\iota \wedge d''\omega\Big\rangle\Big) \\ 
=\Big\langle 
\big[ \log |f_p|\big]\, 
[{\rm div}(f_1)]\wedge \cdots \wedge [{\rm div}(f_{p-1})]\,,\, 
d'd''\omega\Big\rangle = 
\Big\langle 
[{\rm div}(f_1)]\wedge \dots \wedge [{\rm div}(f_p)]\,,\, \omega\Big\rangle 
\end{multline*} 
compte-tenu de la d\'efinition inductive des courants  
$[{\rm div}(f_1)\wedge \cdots \wedge [{\rm div}(f_k)]$ pour 
$k=2,...,p$ et de la formule de Lelong-Poincar\'e sur le ferm\'e de Zariski 
$Z_1\cap \cdots \cap Z_{p-1}$ (de codimension $p-1$) consid\'er\'e comme un 
$\K$-espace analytique de dimension $n-(p-1)$. 
\hfil\break 
Il reste \`a justifier l'\'egalit\'e 
$$
T_{\lambda_{\tau(1)},...,\lambda_{\tau(p)}}^{f_{\tau(1)},...,f_{\tau(p)}}
= T_{\lambda_1,...,\lambda_p}^{f_1,...,f_p} 
$$
pour toute transposition $\tau$ de $\mathscr S_{\{1,...,p\}}$ (groupe des permutations). 
Le r\'esultat est acquis du fait de l'hypoth\`ese de r\'ecurrence lorsque $\tau(p)=p$ et 
l'on peut se ramener ainsi \`a prouver 
l'\'egalit\'e courantielle 
\begin{equation}\label{egalitecourantielle} 
T_{\lambda_1,...,\lambda_{p-2},\lambda_{p-1},\lambda_p}^{f_1,...,f_{p-2},f_{p-1},f_p} = 
T_{\lambda_1,...,\lambda_{p-2},\lambda_{p},\lambda_{p-1}}^{f_1,...,f_{p-2},f_{p},f_{p-1}}. 
\end{equation} 
On d\'efinit pour cela comme dans la remarque \ref{rem2fonctions} le courant 
$$
\mu_{\lambda_1,...,\lambda_p} 
= d'\Big( 
\Big[\frac{|f_p|^{\lambda_p}}{\lambda_p}\Big] 
\, T_{\lambda_1,...,\lambda_{p-2},\lambda_{p-1}}^{f_1,...,f_{p-2},f_{p-1}}\Big) 
- d'\Big( 
\Big[\frac{|f_{p-1}|^{\lambda_{p-1}}}{\lambda_{p-1}}\Big]\, 
T_{\lambda_1,...,\lambda_{p-2},\lambda_p}^{f_1,...,f_{p-2},f_p}\Big)\,   
$$
o\`u la multiplication des courants s'effectue suivant la d\'emarche 
inductive \eqref{produitaux}. 
Si l'on remarque que 
\begin{eqnarray*} 
T_{\lambda_1,...,\lambda_{p-2},\lambda_{p-1}}^{f_1,...,f_{p-2},f_{p-1}} = 
d'd''\Big( 
\Big[\frac{|f_{p-1}|^{\lambda_{p-1}}}{\lambda_{p-1}}\Big] 
\, T_{\lambda_1,...,\lambda_{p-2}}^{f_1,...,f_{p-2}}\Big) \ ,\ 
T_{\lambda_1,...,\lambda_{p-2},\lambda_{p}}^{f_1,...,f_{p-2},f_{p}} = 
d'd''\Big( 
\Big[\frac{|f_{p}|^{\lambda_{p}}}{\lambda_p}\Big] 
\, T_{\lambda_1,...,\lambda_{p-2}}^{f_1,...,f_{p-2}}\Big), 
\end{eqnarray*} 
on se met en situation de reprendre les calculs \eqref{calculsremarque}.  
On v\'erifie 
que le courant $\mu_{\lambda_1,...,\lambda_p}$ est $d''$-ferm\'e dans l'union des ouverts 
$U_{f_j}$ pour $j=1,...,p$~: 
\begin{itemize} 
\item 
si $\alpha\in \mathscr A_c^{n-p,n-p}(U)$ est de support 
dans $U_{f_{p-1}}\cup U_{f_p}$, les calculs 
sont identiques \`a ceux conduits dans \eqref{calculsremarque} et le courant $d'$ et $d''$-ferm\'e 
$T_{\lambda_1,...,\lambda_{p-2}}^{f_1,...,f_{p-2}}$ qui y 
intervient y joue un r\^ole neutre~; 
\item si d'autre part $\alpha\in \mathscr A_{c}^{n-p,n-p}(U)$ est de support 
dans $U_{f_{j_0}}$ pour $j_0$ entre $1$ et $p-2$, on est amen\'e \`a remplacer  
la forme $\alpha$ par la forme lisse $\alpha \wedge d'd''(|f_{j_0}|^{\lambda_{j_0}}/\lambda_{j_0})$ 
et \`a \'eliminer ainsi la fonction $f_{j_0}$ de la liste 
$[f_1,...,f_{p-2}]$, ce qui permet d'abaisser le nombre de 
fonctions $f_1,...,f_{p-2}$. 
\end{itemize}
Comme dans la remarque \ref{rem2fonctions}, on conclut \`a l'\'egalit\'e courantielle 
\eqref{egalitecourantielle}.  Ce qui ach\`eve la preuve du th\'eor\`eme \ref{theorempfonctions}.   
\end{proof}
\section{R\'ealisation \`a la Mellin de courants de Green normalis\'es}\label{sectionGreen}

Dans cette section, comme dans la pr\'ec\'edente, $X$ d\'esigne un bon espace de Berkovich sans bord de dimension pure $n$.
\vskip 1mm
\noindent
Soit $\mathscr{L} \rightarrow U$ un fibr\'e en droites au-dessus d'un ouvert $U$ de $X$ \'equip\'e d'une 
m\'etrique continue $\|\ \|=\exp(-\rho)$, o\`u $\rho$ est une fonction continue r\'eelle. On pourra se repporter 
\`a \cite[section 6.2]{ChLD} pour la notion de fibr\'e en droites 
avec une m\'etrique et \`a \cite[section 6.4.1]{ChLD} pour la d\'efinition de la forme de Chern ou du courant de Chern suivant que la m\'etrique $\|\ \|$ est lisse ou non.
\'Etant donn\'ee une section m\'eromorphe $s$ du fibr\'e $\mathscr{L}$ au-dessus de l'ouvert $U$, on convient d'appeler courant de Green normalis\'e subordonn\'e au courant 
$[{\rm div}(s)]$ dans $U$ un courant $G\in\mathscr D_{0,0}(U)$ tel que 
$$
d'd'' G + [{\rm div}(s)] = c_1(\mathscr{L},\|\ \|), 
$$
o\`u $\|\ \|$ d\'esigne une m\'etrique continue sur le fibr\'e en droites 
$\mathscr{L}$ et $c_1(\mathscr{L},\|\ \|)$ d\'esigne le $(1,1)$-courant 
de Chern associ\'e \`a la m\'etrique $\|\ \|$.  
Lorsque cette m\'etrique est lisse, il en est de m\^eme de la premi\`ere forme de Chern que l'on convient de noter pour simplifier 
$c_1(\mathscr {L},\|\ \|)$ et le $(0,0)$-courant $G$ est alors un courant de Green pour $[{\rm div}(s)]$, au sens 
o\`u $d'd''G + [{\rm div}(s)]$ est un $(1,1)$-courant de la forme $\varphi \mapsto \int_U \omega \wedge \varphi$, o\`u 
$\omega = c_1(\mathscr {L},\|\ \|)$ est une forme lisse.
\vskip 1mm
\noindent
Soit $\omega\in \mathscr{A}_c^{n,n}(U)$. Le support (compact) de $\omega$ \'evite tout sous-ensemble ferm\'e de Zariski d'int\'erieur 
vide \cite[lemme 3.2.5]{ChLD} et l'on peut 
donc affirmer qu'il existe, pour tout $x\in {\rm Supp}(\omega)$,  un voisinage $V_x$
de $x$ dans $U$ au-dessus duquel le fibr\'e $\mathscr{L}$ admet un rep\`ere
$\sigma_{V_x}$ dans lequel la section $s$ s'exprime sous la forme $f_{V_x}
\sigma_{V_x}$, o\`u $f_{V_x}$ est une fonction r\'eguli\`ere inversible
dans $V_x$. 
\begin{definition}{\rm 
Soit $s~: U\to \mathscr L$ une section m\'eromorphe du fibr\'e $\mathscr L$ au-dessus de $U$, \'equip\'e d'une m\'etrique lisse $\|\ \|$.  
On d\'efinit donc, pour tout $\lambda \in \R^*$, un \'el\'ement de $\mathscr{D}_{0,0}(U)$ par~:
$$
G_\lambda^{s} = -\Big[\frac{\|s\|^{\lambda}}{\lambda}\Big]~:
\omega \in \mathscr{A}^{n,n}_c(U) \longmapsto
-\int_{U} \frac{\|s\|^{\lambda}}{\lambda}\, \omega.
$$
}
\end{definition}
\noindent
Il r\'esulte de la formule de Lelong-Poincar\'e que l'on a, au sens des courants dans $U$,
\begin{equation}\label{poinclel2}
\lim\limits_{\stackrel{\lambda \rightarrow 0}{\lambda \not=0}}
(d'd'' G_{\lambda}^{s}) + [{\rm div}(s)] = c_1(\mathscr{L},\|\ \|).
\end{equation}
En effet, l'on a d'apr\`es la formule de Stokes ($X$ est suppos\'e sans bord), si $\varphi$ 
d\'esigne une fonction lisse identiquement 
\'egale \`a $1$ au voisinage du support de $d''\omega$ et de support compact dans $U$ 
(que l'on peut encore construire gr\^ace au th\'eor\`eme de partitionnement de l'unit\'e, \cite[proposition 3.3.6]{ChLD}), 
\begin{multline*}
\forall\,\omega\in\mathscr A_c^{n-1,n-1}(U),\quad \big\langle d'd'' G_\lambda^s\,,\,\omega\big\rangle=- 
\big\langle d' G_\lambda^s\,,\,d''\omega\big\rangle=-\int_{X^{\rm an}}d'\Big(\varphi\frac{\|s\|^\lambda}{\lambda}\Big)\wedge d''\omega\\
=-\int_{U_s}\|s\|^\lambda\,d'\big(\log\|s\|\big)\wedge d''\omega, 
\end{multline*}
avec $U_s~:= U\setminus Z$, o\`u $Z$ est le sous-espace analytique ferm\'e (au sens de Zariski) de $U$ d\'efinit comme le support du courant 
$[{\rm div}(s)]$.
\vskip 1mm
\noindent
D'apr\`es le th\'eor\`eme de convergence domin\'ee de Lebesgue, la fonction 
$\lambda \mapsto d'd''G_\lambda^s$ (\`a valeurs dans $\mathscr D_{1,1}(U)$) 
admet comme limite lorsque $\lambda$ tend vers $0$
\begin{multline}\label{eqgreen1}
 \forall\, \omega\in\mathscr A_c^{n-1,n-1}(U),\quad \longmapsto -\int_{U_s}d'\big(\log\|s\|\big)\wedge d''\omega
 =-\int_{U_s}d'd''\big(\log\|s\|\big)\wedge\omega\\=-\big\langle[{\rm div}(s)]-
 d'd''[\rho]\,,\, \omega\big\rangle.
\end{multline}
Ainsi on conclut de \eqref{eqgreen1}, avec  $c_1(\mathscr{L},\|\ \|)=d'd''[\rho]$, ce qui ach\`eve la justification de l'\'egalit\'e 
\eqref{poinclel2}.
\vskip 1mm
\noindent
Supposons maintenant que $\mathscr{L}_1 \rightarrow U$ et $\mathscr{L}_2\rightarrow U$ sont deux fibr\'es en droites au-dessus de $U$, 
chacun \'equip\'e d'une m\'etrique 
lisse $(e^{-\rho_{j,\iota}})_\iota$ (subordonn\'ee \`a un recouvrement $(V_\iota)_\iota$
de $U$ suffisamment fin pour que les deux fibr\'es se trivialisent au-dessus de chaque $V_\iota$), ce qui signifie que, pour chaque 
$\iota$, les deux fonctions 
$\rho_{j,\iota}$ s'expriment localement au voisinage $\xi$ de chaque point de $V_\iota$ comme des fonctions $C^\infty$
\`a valeurs r\'eelles de fonctions du type $\log|f_{\iota,\xi}|$ o\`u $f_{\iota,\xi}$ est une fonction
r\'eguli\`ere inversible. Pour $j=1,2$, les premiers courants de Chern
$c_1(\mathscr{L}_j,\|\ \|_j)$ sont dans ce cas associ\'es \`a des \'el\'ements de
$\mathscr{A}^{1,1}(U)$ (que l'on notera de la m\^eme mani\`ere, mais ce sont cette fois des $(1,1)$-formes diff\'erentielles dans $U$, 
que l'on traitera comme telles), 
dites premi\`eres formes de Chern
des fibr\'es $\mathscr{L}_j$ (chacun \'equip\'e de la m\'etrique lisse $\|\ \|_j$).
\vskip 1mm
\noindent
La proposition suivante s'inscrit dans la droite ligne de la proposition \ref{prop2fonctions}.

\begin{prop}\label{prop2fonctions2} Soient $\mathscr{L}_1 \rightarrow U$
et $\mathscr{L}_2 \rightarrow U$ deux fibr\'es en droites au-dessus d'un ouvert
$U$ d'un bon $\K$-espace de Berkovich $X$ sans bord, chacun \'equip\'e
d'une m\'etrique lisse $\|\ \|_j$. Soient $s_1$ et $s_2$ deux sections m\'eromorphes
respectivement de $\mathscr{L}_1$ et $\mathscr{L}_2$ telles que
$${\rm codim}_{U}({\rm Supp}\big([{\rm div}(s_1)]) \cap {\rm Supp}([{\rm div}(s_2)])\big)\geq 2.$$ Pour tout
$(\lambda_1,\lambda_2)\in (\R^*)^2$, on d\'efinit un \'el\'ement $G_{\lambda_1,\lambda_2}^{s_1,s_2}$ de $\mathscr{D}_{1,1}(U)$ par
$$
G_{\lambda_1,\lambda_2}^{s_1,s_2}~:
\omega \in \mathscr{A}^{n-1,n-1}_c(U)
\longmapsto - \int_{U} \frac{\|s_2\|_2^{\lambda_2}}{\lambda_2}\,
d'd'' \Big(\frac{\|s_1\|^{\lambda_1}_1}{\lambda_1}\Big) \wedge \omega
$$
apr\`es avoir d\'ecoup\'e cette int\'egrale suivant un partitionnement de l'unit\'e 
$1=\sum_\iota \varphi_\iota$ subordonn\'ee au support de $d''\omega$ afin d'en assurer la convergence. 
De plus on a, au sens de la convergence faible des courants sur $U$,
\begin{equation}\label{green1}
\begin{split}
& \lim\limits_{\stackrel{(\lambda_1,\lambda_2)
\rightarrow (0,0)}{\lambda_1\not=0,\lambda_2 \not=0}}
\Big(d'd'' \Big( G_{\lambda_1,\lambda_2}^{s_1,s_2} +
c_1(\mathscr{L}_1,\|\ \|_1) \wedge G_{\lambda_2}^{s_2}
+ c_1(\mathscr{L}_2,\|\ \|_2)
\wedge G_{\lambda_1}^{s_1}\Big)\Big) \\
& \qquad + [{\rm div}(s_1)] \wedge [{\rm div}(s_2)] = \big[c_1(\mathscr{L}_1,\|\ \|_1) \wedge c_1(\mathscr{L}_2,\|\ \|_2)\big], 
\end{split}
\end{equation}
o\`u le produit de courants $[{\rm div}(s_1)] \wedge [{\rm div}(s_2)]$ est  
d\'efini localement comme l'est le courant $[{\rm div}(f_1)]\wedge [{\rm div}(f_2)]$ 
dans la Proposition \ref{prop2fonctions} \`a partir des fonctions m\'eromorphes coordonn\'ees $f_1$ et $f_2$ respectivement de $s_1$ et $s_2$ dans les rep\`eres locaux pour les fibr\'es $\mathscr{L}_1$ et $\mathscr {L}_2$.   
\end{prop}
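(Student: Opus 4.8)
Le plan est de reprendre pas à pas la démonstration de la proposition \ref{prop2fonctions}, la seule nouveauté provenant des métriques lisses. On se place d'abord dans des repères locaux~: au voisinage de tout point de ${\rm Supp}(\omega)$, chaque $\mathscr L_j$ se trivialise et $s_j = f_j\,\sigma_j$, de sorte que $\|s_j\|_j = |f_j|\,e^{-\rho_j}$, d'où $\log\|s_j\|_j = \log|f_j| - \rho_j$ sur $U_{s_j}$ et, par la formule de Lelong-Poincaré \cite[théorème 4.6.5]{ChLD} (qui donne $d'd''\log|f_j| = 0$ sur $U_{f_j}$), l'identité fondamentale $d'd''\log\|s_j\|_j = -\,c_1(\mathscr L_j,\|\ \|_j)$ sur $U_{s_j}$. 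L'hypothèse ${\rm codim}_U(Z_1\cap Z_2)\geq 2$ assure, via \cite[lemme 3.2.5]{ChLD}, que ${\rm Supp}(d''\omega)$ évite $Z_1\cap Z_2$, ce qui autorise l'introduction d'une partition de l'unité $1=\sum_\iota\varphi_\iota$ subordonnée au recouvrement $\{U_{s_1},U_{s_2}\}$ (\cite[proposition 3.3.6]{ChLD}) et donne un sens à l'intégrale définissant $G_{\lambda_1,\lambda_2}^{s_1,s_2}$, exactement comme dans la proposition \ref{prop2fonctions}.

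On observe ensuite que $d'd''\big(c_1(\mathscr L_j)\wedge G_{\lambda}^{s}\big) = c_1(\mathscr L_j)\wedge d'd''G_\lambda^s$ et que, d'après \eqref{poinclel2}, $d'd''G_{\lambda_j}^{s_j}\to c_1(\mathscr L_j)-[{\rm div}(s_j)]$~; les deux termes correctifs contribuent donc, à la limite, par $2\,[c_1(\mathscr L_1)\wedge c_1(\mathscr L_2)] - c_1(\mathscr L_1)\wedge[{\rm div}(s_2)] - c_1(\mathscr L_2)\wedge[{\rm div}(s_1)]$. Toute la preuve se ramène alors à établir que la limite de $d'd''G_{\lambda_1,\lambda_2}^{s_1,s_2}$ vaut $-[c_1(\mathscr L_1)\wedge c_1(\mathscr L_2)] + c_1(\mathscr L_1)\wedge[{\rm div}(s_2)] + c_1(\mathscr L_2)\wedge[{\rm div}(s_1)] - [{\rm div}(s_1)]\wedge[{\rm div}(s_2)]$, car en sommant les trois limites et le courant $[{\rm div}(s_1)]\wedge[{\rm div}(s_2)]$ on retrouve exactement $[c_1(\mathscr L_1)\wedge c_1(\mathscr L_2)]$.

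Pour calculer cette dernière limite, une intégration par parties (formule de Stokes, $X$ sans bord) montre que $d'd''G_{\lambda_1,\lambda_2}^{s_1,s_2}$ coïncide, au signe près, avec le courant construit comme le $T_{\lambda_1,\lambda_2}$ de la proposition \ref{prop2fonctions} mais à partir des $\|s_j\|_j$ au lieu des $|f_j|$. On relance alors les calculs \eqref{expressioncas1}--\eqref{limite-cas2} avec $\log|f_j|$ remplacé par $\log\|s_j\|_j$, en utilisant le développement $\frac{\|s_j\|_j^{\lambda_j}}{\lambda_j} = \frac{1}{\lambda_j} + \log\|s_j\|_j + \sum_{k\geq 2}\frac{\lambda_j^{k-1}}{k!}(\log\|s_j\|_j)^k$ (uniforme sur tout compact de $U_{s_j}$). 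La seule différence est que $d'd''\log\|s_j\|_j$ ne s'annule plus mais vaut $-c_1(\mathscr L_j)$~: ce sont ces termes de courbure qui engendrent, par convergence dominée, les contributions croisées $c_1(\mathscr L_j)\wedge[{\rm div}(s_{3-j})]$ et le terme $-[c_1(\mathscr L_1)\wedge c_1(\mathscr L_2)]$, cependant que les parties singulières $\lambda_j\|s_j\|_j^{\lambda_j}\,d'\log\|s_j\|_j\wedge d''\log\|s_j\|_j$ reproduisent $T_{\lambda_1,\lambda_2}^{f_1,f_2}$ et fournissent $-[{\rm div}(s_1)]\wedge[{\rm div}(s_2)]$ via la proposition \ref{prop2fonctions} et l'égalité $[{\rm div}(s_j)]=[{\rm div}(f_j)]$ dans le repère.

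Le point délicat sera d'assurer l'existence de la limite inconditionnelle quand $(\lambda_1,\lambda_2)\to(0,0)$, c'est-à-dire de vérifier qu'aucun terme divergent du type $1/\lambda_j$, $\lambda_1/\lambda_2$ ou $1/(\lambda_1\lambda_2)$ ne subsiste. Les parties constantes $1/\lambda_j$ des développements précédents sont $d'd''$-fermées et leurs contributions s'annulent précisément contre celles des deux termes correctifs $c_1(\mathscr L_j)\wedge G_{\lambda_{3-j}}^{s_{3-j}}$ — c'est là tout leur rôle —, grâce à l'identité $d'd''\log\|s_j\|_j=-c_1(\mathscr L_j)$. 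Il restera à mener ce contrôle séparément sur les morceaux de la partition à support dans $U_{s_1}$ et dans $U_{s_2}$, parallèlement au traitement des deux cas de la proposition \ref{prop2fonctions}, le reste relevant de la convergence dominée.
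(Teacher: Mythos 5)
Your proposal is correct and follows essentially the same route as the paper: the same three-term decomposition, the same use of \eqref{poinclel2} to evaluate the limits of the correction terms $c_1(\mathscr L_j,\|\ \|_j)\wedge G^{s_{3-j}}_{\lambda_{3-j}}$, and the same reduction of the problem to showing that $d'd''G^{s_1,s_2}_{\lambda_1,\lambda_2}$ converges to $\big(c_1(\mathscr{L}_2,\|\ \|_2)-[{\rm div}(s_2)]\big)\wedge\big([{\rm div}(s_1)]-c_1(\mathscr{L}_1,\|\ \|_1)\big)$ by localizing on $U_{s_1}$ and $U_{s_2}$ via the partition of unity and invoking Proposition \ref{prop2fonctions}. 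You are somewhat more explicit than the paper about how the curvature terms $d'd''\log\|s_j\|_j=-c_1(\mathscr L_j,\|\ \|_j)$ generate the cross contributions when the computation of Proposition \ref{prop2fonctions} is rerun with $\log|f_j|$ replaced by $\log\|s_j\|_j$ (the paper simply cites that proposition), and your parenthetical claim that the sole role of the correction terms is to absorb the $1/\lambda_j$ parts slightly misstates their function — they cancel the cross terms $c_1\wedge[{\rm div}]$ in the limit — but this does not affect the validity of the argument.
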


\begin{proof}
La preuve est similaire \`a celle de la proposition \ref{prop2fonctions}.
On note encore $Z_1$ et $ Z_2$ les sous-espaces analytiques ferm\'es (au sens de Zariski) de $U$ d\'efinis 
comme les supports des courants 
$[{\rm div}(s_1)]$ et $[{\rm div}(s_2)]$ et $U_{s_j} :=U \setminus Z_j$ ($j=1,2$). Notons $\iota_j~:  Z_j \rightarrow U$ 
les morphismes de $\K$-espaces 
analytiques correspondant aux inclusions
$ Z_j\subset U$ (o\`u $j=1,2$).
Soit $\omega\in \mathscr{A}^{n-1,n-1}_c(U)$. Du fait de l'hypoth\`ese 
${\rm codim}_U \big({\rm Supp}\, \big([{\rm div}(s_1)]\big) \cap {\rm Supp}\, \big([{\rm div}(s_2)]\big)\big)\geq 2$, 
il r\'esulte du lemme 3.2.5 de \cite{ChLD} et de la d\'efinition de la dimension locale $d_{\K}(x)$ ($x\in U)$ 
comme le minimum des dimensions $\K$-analytiques des domaines $\K$-affinoides qui contiennent $x$ (voir par exemple  
\cite[d\'efinition 1.16]{Duc07}), que le support de la  $(n-2,n-1)$-forme diff\'erentielle 
$d''\omega$ ne rencontre pas le sous-ensemble de Zariski $ Z_1\cap  Z_2$.      
D'apr\`es le lemme de partitionnement de l'unit\'e \cite[proposition 3.3.6]{ChLD}, 
on peut introduire dans $ U$ une partition de l'unit\'e $1= \sum_\iota \varphi_\iota$ (par des fonctions lisses \`a support compact), 
subordonn\'ee au recouvrement du 
compact ${\rm Supp}\, (d''\omega)$ de $ U$ par les deux ouverts $U_{s_1}$ et $ U_{s_2}$. Pour chaque indice $\iota$, l'int\'egrale
$$
- \int_{U} \frac{\|s_2\|_2^{\lambda_2}}{\lambda_2}
\, d'd'' \Big(\frac{\|s_1\|_1^{\lambda_1}}{\lambda_1}\Big)\wedge \varphi_\iota\, \omega
$$
est bien d\'efinie. Il est donc clair que l'on d\'efinit l'action d'un courant de
bidimension $(n-1,n-1)$ en posant
\begin{equation}\label{green1aux}
\big\langle G_{\lambda_1,\lambda_2}^{s_1,s_2},\omega \big\rangle
:= - \sum\limits_\iota \int_{U} \frac{\|s_2\|_2^{\lambda_2}}{\lambda_2}
\, d'd'' \Big(\frac{\|s_1\|_1^{\lambda_1}}{\lambda_1}\Big)\wedge \varphi_\iota\, \omega.
\end{equation}
Il r\'esulte de \eqref{poinclel2} et \eqref{eqgreen1} que  l'on a respectivement dans $U_{s_2}$ et $U_{s_1}$ les \'egalit\'es suivantes :
\begin{multline}\label{eqgreen2}
 \lim\limits_{\stackrel{(\lambda_1,\lambda_2)
\rightarrow (0,0)}{\lambda_1\not=0,\lambda_2 \not=0}} d'd''\Big(c_1(\mathscr L_1,\|\ \|_1)\wedge G_{\lambda_2}^{s_2}\Big)= 
c_1(\mathscr L_1,\|\ \|_1)\wedge\big( -[{\rm div}(s_2)] + c_1(\mathscr{L}_2,\|\ \|_2)\big)\\
\lim\limits_{\stackrel{(\lambda_1,\lambda_2)
\rightarrow (0,0)}{\lambda_1\not=0,\lambda_2 \not=0}} d'd''\Big(c_1(\mathscr L_2,\|\ \|_2)\wedge G_{\lambda_1}^{s_1}\Big)= 
c_1(\mathscr L_2,\|\ \|_2)\wedge\big( -[{\rm div}(s_1)] + c_1(\mathscr{L}_1,\|\ \|_1)\big).
\end{multline}

Il r\'esulte  aussi de la proposition \ref{prop2fonctions} que dans chacun des deux ouverts $U_{s_j}$, $j=1,2$,
on a
\begin{equation}\label{eqgreen3}
\begin{split}
& \lim\limits_{\stackrel{(\lambda_1,\lambda_2)
\rightarrow (0,0)}{\lambda_1\not=0,\lambda_2 \not=0}} \big(d'd'' (G_{\lambda_1,\lambda_2}^{s_1,s_2}\big)\big) = \big(c_1(\mathscr{L}_2,\|\ \|_2)
- [{\rm div}(s_2)]\big)\wedge \big([{\rm div}(s_1)] - c_1(\mathscr{L}_1,\|\ \|_1)\big) \\
& \qquad = - [{\rm div}(s_1)] \wedge [{\rm div}(s_2)] - \big[c_1(\mathscr{L}_1,\|\ \|_1) \wedge c_1(\mathscr{L}_2,\|\ \|_2)\big] \\
& \qquad \qquad \qquad  + c_1(\mathscr{L}_1,\|\ \|_1) \wedge [{\rm div}(s_2)] + c_1(\mathscr{L}_2,\|\ \|_2) \wedge [{\rm div}(s_1)].
\end{split}
\end{equation}
Du fait de la possibilit\'e de d\'ecomposer $\langle G_{\lambda_1,\lambda_2}^{s_1,s_2},\omega \rangle$ sous 
la forme \eqref{green1aux} suivant une partition de 
l'unit\'e subordonn\'ee \`a un recouvrement de l'adh\'erence d'un voisinage ouvert de ${\rm Supp}(\omega)$ 
par des ouverts dans lesquelles une des sections $s_j$ au 
moins est r\'eguli\`ere et inversible, cette relation asymptotique entre courants est valide dans $U$ tout entier. 
En combinant \eqref{eqgreen2} et \eqref{eqgreen3} et en tenant compte de \eqref{prod2diviseurs},  
on obtient bien la relation asymptotique \eqref{green1} voulue. 
\end{proof}

\vskip 1mm
\noindent
Par r\'ecurrence sur l'entier $p=2,...,n$, nous sommes en mesure de d\'emontrer le r\'esultat suivant, pendant naturel du th\'eor\`eme 
\ref{theorempfonctions}.

\begin{theorem}\label{theorempfonctions2}
Soient $\mathscr{L}_j\rightarrow U$, $j=1,...,p$, $p\geq 2$ fibr\'es en droites au-dessus d'un ouvert $U$ d'un bon $\K$-espace analytique 
$X$ au sens de Berkovich sans bord, \'equip\'e chacun d'une m\'etrique lisse
$\|\ \|_j$. Pour chaque $j=1,...,p$, soit $s_j$ une section m\'eromorphe du fibr\'e $\mathscr{L}_j$ dans $U$. On suppose que pour tout
$1\leq j_1 < \dots < j_k \leq p$ (avec $k=1,...,p$) on a
${\rm codim}_{U} \Big(\bigcap_1^k {\rm Supp}\, \big([{\rm div}(s_{j_\ell})]\big)\Big)\geq k$ comme au th\'eor\`eme 
{\rm \ref{theorempfonctions}}.
Pour tout $(\lambda_1,...,\lambda_p) \in (\R^*)^p$, on peut d\'efinir l'action d'un courant 
$G^{s_1,...,s_p}_{\lambda_1,...,\lambda_p}$ de 
$\mathscr{D}_{n-p+1,n-p+1}(U)$ par
$$
G_{\lambda_1,...,\lambda_p}^{s_1,...,s_p}~:
\omega \in \mathscr{A}^{n-p+1,n-p+1}_c(U)
\longmapsto - \int_{U} \frac{\|s_p\|_p^{\lambda_p}}{\lambda_p}\,
\bigwedge\limits_{j=1}^{p-1} d'd'' \Big(\frac{\|s_j\|^{\lambda_j}_j}{\lambda_j}\Big)\wedge \omega
$$
apr\`es avoir d\'ecoup\'e cette int\'egrale suivant un partitionnement de l'unit\'e 
$1=\sum_\iota \varphi_\iota$ subordonn\'ee au support de $d''\omega$ afin d'en assurer la convergence. 
De plus on a, au sens de la convergence faible des courants sur $U$,
\begin{equation}\label{greentheo}
\begin{split}
& \lim\limits_{\stackrel{(\lambda_1,...,\lambda_p)
\rightarrow (0,...,0)}{\lambda_1\not=0,...,\lambda_p \not=0}}
\Big(d'd'' \Big(G^{s_1,...,s_p}_{\lambda_1,...,\lambda_p} + \sum\limits_{k=1}^{p-1}\sum\limits_{1\leq j_1<\dots < j_k\leq p}
\Big(\bigwedge_{j \not= j_1,...,j_k} c_1(\mathscr{L}_{j},\|\ \|_{j})\Big)
\wedge G^{s_{j_1},...,s_{j_k}}_{\lambda_{j_1},...,\lambda_{j_k}}\Big)\Big) \\
& \qquad + \bigwedge\limits_{j=1}^p [{\rm div}(s_j)] = \Big[\bigwedge\limits_{j=1}^p
c_1(\mathscr{L}_j,\|\ \|_j)\Big],
\end{split}
\end{equation} 
o\`u le produit de courants $[{\rm div}(s_1)] \wedge \dots \wedge  [{\rm div}(s_p)]$ est  
d\'efini localement comme l'est le courant $[{\rm div}(f_1)]\wedge \dots \wedge [{\rm div}(f_p)]$ 
dans le th\'eor\`eme \ref{theorempfonctions} \`a partir des fonctions m\'eromorphes coordonn\'ees $f_1,...,f_p$ des $s_j$ dans les rep\`eres locaux pour les fibr\'es $\mathscr {L}_j$, $j=1,...,p$.   
\end{theorem}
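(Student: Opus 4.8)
The plan is to argue by induction on $p$, the base case $p=2$ being Proposition~\ref{prop2fonctions2}, reducing everything to the pure-divisor computation of Th\'eor\`eme~\ref{theorempfonctions} applied to the norms $\|s_j\|_j$ in place of the functions $|f_j|$, with the smooth weights of the metrics producing the Chern-form corrections. I abbreviate $a_j:=c_1(\mathscr L_j,\|\ \|_j)$ (a smooth $(1,1)$-form), $d_j:=[\mathrm{div}(s_j)]$, $b_j:=a_j-d_j$, and $u_j:=\|s_j\|_j^{\lambda_j}/\lambda_j$ (smooth on $U_{s_j}$). As in Th\'eor\`eme~\ref{theorempfonctions}, the codimension hypothesis together with \cite[lemme 3.2.5]{ChLD} ensures that $\mathrm{Supp}(d''\omega)$ avoids $Z_1\cap\dots\cap Z_p$, so I fix a partition of unity $1=\sum_\iota\varphi_\iota$ subordinate to the covering of $\mathrm{Supp}(d''\omega)$ by $U_{s_1},\dots,U_{s_p}$ \cite[proposition 3.3.6]{ChLD}; on a piece with $\mathrm{Supp}\,\varphi_\iota\subset U_{s_{j_0}}$ the section $s_{j_0}$ is invertible, so in a local frame $\|s_{j_0}\|_{j_0}=|f_{j_0}|\,e^{-\rho_{j_0}}$ with $f_{j_0}$ regular invertible and $\rho_{j_0}$ smooth, and the currents are handled through \cite[lemme 4.6.1]{ChLD} exactly as the $f_j$ were.

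The computational heart is a single integration by parts: since each intermediate factor $d'd''[u_j]$ is both $d'$- and $d''$-closed, transferring the outer $d'd''$ off $\omega$ onto the undifferentiated factor $u_p$ collapses all cross terms and gives $d'd''G^{s_1,\dots,s_p}_{\lambda}=-\bigwedge_{j=1}^p d'd''[u_j]$. By \eqref{poinclel2} each factor satisfies $\lim_{\lambda_j\to0}d'd''[u_j]=d_j-a_j=-b_j$, so---granting that the limit may be taken inside this Mellin-regularized product, which is the content of the induction---one obtains the auxiliary identity $\lim d'd''G^{s_1,\dots,s_p}_{\lambda}=(-1)^{p-1}\bigwedge_{j=1}^p b_j$, consistent with \eqref{poinclel2} for $p=1$ and with \eqref{eqgreen3} for $p=2$. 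The induction hypothesis supplies the same identity at every proper subset $S\subsetneq\{1,\dots,p\}$, namely $\lim d'd''G^{s_S}_{\lambda_S}=(-1)^{|S|-1}\bigwedge_{j\in S}b_j$, the smooth forms $a_j$ wedging unproblematically against these currents.

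Assembling $\Theta_\lambda:=G^{s_1,\dots,s_p}_{\lambda}+\sum_{k=1}^{p-1}\sum_{|S|=k}\bigl(\bigwedge_{j\notin S}a_j\bigr)\wedge G^{s_S}_{\lambda_S}$ and summing the limits (the first term being the $S=\{1,\dots,p\}$ contribution, with coefficient $(-1)^{p-1}$) yields $\lim d'd''\Theta_\lambda=\sum_{\emptyset\neq S}(-1)^{|S|-1}\bigl(\bigwedge_{j\notin S}a_j\bigr)\wedge\bigl(\bigwedge_{j\in S}b_j\bigr)$. The binomial identity $\sum_{S\subseteq\{1,\dots,p\}}(-1)^{|S|}\bigl(\bigwedge_{j\notin S}a_j\bigr)\wedge\bigl(\bigwedge_{j\in S}b_j\bigr)=\bigwedge_{j=1}^p(a_j-b_j)=\bigwedge_{j=1}^p d_j$---valid because all $a_j,b_j$ are of $(1,1)$-type and hence commute---then collapses this to $\bigwedge_j a_j-\bigwedge_j d_j$, which is exactly \eqref{greentheo}; for $p=2$ it reproduces Proposition~\ref{prop2fonctions2}.

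The hard part is not this combinatorics but, as in Th\'eor\`eme~\ref{theorempfonctions}, the rigorous proof that $\lim d'd''G^{s_1,\dots,s_p}_{\lambda}$ exists and equals $(-1)^{p-1}\bigwedge_j b_j$, i.e. that the limit genuinely commutes with the regularized product. This I would establish piece by piece: on a piece supported in $U_{s_{j_0}}$ with $j_0<p$ one expands $\|s_{j_0}\|_{j_0}^{\lambda_{j_0}}=\sum_k\lambda_{j_0}^k(\log\|s_{j_0}\|_{j_0})^k/k!$ and uses the already-established unconditional limit \eqref{limitepfonctions} to see the contribution vanishes; on a piece supported in $U_{s_p}$ one applies the induction hypothesis together with \cite[lemme 4.6.1]{ChLD}, the only new feature relative to the pure-function setting being the smooth weights $\rho_j$, whose $d'd''$ supply precisely the forms $a_j$ absorbed by the correction terms. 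Finally, independence of the limit on the ordering of the $s_j$ follows as in Th\'eor\`eme~\ref{theorempfonctions} from the $d''$-closedness of the analogue of $\mu_{\lambda_1,\dots,\lambda_p}$, completing the induction.
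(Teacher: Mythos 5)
Votre preuve suit pour l'essentiel la d\'emarche du texte~: r\'ecurrence sur $p$ \`a partir de la proposition \ref{prop2fonctions2}, partition de l'unit\'e subordonn\'ee au recouvrement de ${\rm Supp}(d''\omega)$ par les $U_{s_j}$, r\'eduction au th\'eor\`eme \ref{theorempfonctions} via les rep\`eres locaux $\|s_{j_0}\|_{j_0}=|f_{j_0}|e^{-\rho_{j_0}}$, et identit\'e auxiliaire $\lim d'd''G^{s_1,\dots,s_p}_{\lambda}=-\bigwedge_j\big([{\rm div}(s_j)]-c_1(\mathscr L_j,\|\ \|_j)\big)$ qui est exactement \eqref{green2}. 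La seule diff\'erence est comptable~: l\`a o\`u le texte substitue dans \eqref{green2} les \'equations de Green normalis\'ees des sous-familles (hypoth\`ese de r\'ecurrence) puis regroupe, vous appliquez directement l'identit\'e de produit \`a chaque sous-ensemble $S$ et concluez par l'identit\'e binomiale $\sum_S(-1)^{|S|}\big(\bigwedge_{j\notin S}a_j\big)\wedge\big(\bigwedge_{j\in S}b_j\big)=\bigwedge_j d_j$, ce qui est une r\'eorganisation \'equivalente et l\'eg\`erement plus transparente du m\^eme calcul.
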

\begin{proof}
La preuve est calqu\'ee sur celle du th\'eor\`eme \ref{theorempfonctions}.
Le r\'esultat est acquis pour $p=2$ d'apr\`es la proposition \ref{prop2fonctions2}.
On suppose donc le r\'esultat acquis pour $p-1$ fibr\'es en droites ($p\geq 3$). On note, pour $j=1,...,p$, $Z_j$  
les sous-espaces analytiques ferm\'es (au sens de Zariski) de $U$ de codimension $1$ 
d\'efinis comme les supports des courants $[{\rm div}(s_j)]$. Pour chaque $j=1,...,p$, on note $\widehat{Z}_j$ l'intersection des 
sous-ensembles $\K$-analytiques $Z_\ell$ pour $\ell=1,...,j-1,j+1,...,p$. On note $U_{s_j}$
le plus grand ouvert de $U$ dans lequel la section $s_j$ est localement
r\'eguli\`ere et inversible. Soit $\omega \in \mathscr{A}_c^{n-p+1,n-p+1}(U)$. 
On est maintenant en mesure de donner le sens suivant \`a l'expression (en tenant compte de la d\'emarche conduisant \`a 
\eqref{limitepfonctions}) 
\begin{equation}\label{expressionpsecctions} 
- \int_{ U} d'\Big(\frac{\|s_p\|_p^{\lambda_p}}{\lambda_p}\Big) 
\wedge \Big(\bigwedge\limits_{j=1}^{p-1} d'd'' \Big(\frac{\|s_j\|_j^{\lambda_j}}{\lambda_j}\Big)\Big)\wedge d''(\varphi_\iota \omega)
\end{equation} 
suivant que le support de $\varphi_\iota$ est inclus dans l'un des $U_{s_j}$ pour $j=1,...,p-1$ ou que le support de $\varphi_\iota$ 
est inclus dans $U_{s_p}$.
\begin{itemize}
\item Si ${\rm Supp}\, \varphi_\iota \subset U_{f_{j_0}}$ pour un indice $j_0$ entre $1$ et $p-1$ et si 
${\rm codim}_{U} \widehat { Z_j} = p-1$, on peut consid\'erer
$\widehat {Z_j}$ comme un $\K$-espace analytique de dimension $n-p+1$,  on d\'efinit l'expression \eqref{expressionpsecctions}  par 
\begin{multline}\label{expressionpsectionscas1}
- \int_{U} d'\Big(\frac{\|s_p\|_p^{\lambda_p}}{\lambda_p}\Big) 
\wedge \Big(\bigwedge\limits_{j=1}^{p-1} d'd'' \Big(\frac{\|s_j\|_j^{\lambda_j}}{\lambda_j}\Big)\Big)\wedge d''(\varphi_\iota \omega)\\ 
:= 
- \Big\langle d'
\Big( \Big[\frac{\|s_p\|_p^{\lambda_p}}{\lambda_p}\Big]\, d'd''\,
G_{\lambda_1,...,\widehat{\lambda_{j_0}},...,\lambda_{p-1}}^{s_1,...,\widehat{s_{j_0}},...,s_{p-1}}\Big)\,,\, d'd''\Big(
\frac{\|s_{j_0}\|_{j_0}^{\lambda_{j_0}}}{\lambda_{j_0}}\Big) \wedge d''(\varphi_\iota \omega)\Big\rangle \\ 
= - \lambda_{j_0} 
\, \Big\langle 
d'
\Big( \Big[\frac{\|s_p\|_p^{\lambda_p}}{\lambda_p}\Big]\, d'd''\,
G_{\lambda_1,...,\widehat{\lambda_{j_0}},...,\lambda_{p-1}}^{s_1,...,\widehat{s_{j_0}},...,s_{p-1}}\Big)\,,\,  
\|s_{j_0}\|_{j_0}^{\lambda_{j_0}}\, d'(\log \|s_{j_0}\|_{j_0}) \wedge d''(\log \|s_{j_0}\|_{j_0})
\wedge d''(\varphi_\iota\omega) \Big\rangle,  
\end{multline}
et (d'apr\`es l'hypoth\`ese de r\'ecurrence) on a aussi 
$$
\big\langle G_{\lambda_1,...,\lambda_p}^{s_1,...,s_p}\,,\,
\varphi_\iota \omega \big\rangle :=
\Big\langle
G_{\lambda_1,...,\widehat{\lambda_{j_0}},...,\lambda_p}^{s_1,...,\widehat{s_{j_0}},...,s_p},
\varphi_\iota \omega \wedge d'd'' \Big(\frac{\|s_{j_0}\|_{j_0}^{\lambda_{j_0}}}{\lambda_{j_0}}\Big)
\Big\rangle.
$$
\item  Si ${\rm Supp}\, \varphi_\iota \subset U_{s_p}$, 
on d\'efinit l'expression \eqref{expressionpfonctions} par 
\begin{multline}\label{expressionpsectionscas2}
- \int_{U} d'\Big(\frac{\|s_p\|_p^{\lambda_p}}{\lambda_p}\Big) 
\wedge \Big(\bigwedge\limits_{j=1}^{p-1} d'd'' \Big(\frac{\|s_j\|_j^{\lambda_j}}{\lambda_j}\Big)\Big)\wedge d''(\varphi_\iota \omega)\\ 
:= 
- \Big\langle d'd''\,
G_{\lambda_1,...,\lambda_{p-1}}^{s_1,...,s_{p-1}}\,,\, 
d'\Big(\frac{\|s_p\|_p^{\lambda_p}}{\lambda_p}\Big)\wedge d''(\varphi_\iota\omega)\Big\rangle,
\end{multline}
et toujours suivant l'hypoth\`ese de r\'ecurrence 
$$
\big\langle G_{\lambda_1,...,\lambda_p}^{s_1,...,s_p}\,,\,
\varphi_\iota \omega \big\rangle :=
\Big\langle
d'd'' G_{\lambda_1,...,\lambda_{p-1}}^{s_1,...,s_{p-1}}\,,\,
\varphi_\iota \omega \, \frac{\|s_{p}\|_p^{\lambda_{p}}}{\lambda_{p}}
\Big\rangle.
$$
\end{itemize} 
On d\'efinit l'action du courant $G_{\lambda_1,...,\lambda_p}^{s_1,...,s_p}$
en exploitant le partitionnement de l'unit\'e (par des ouverts tous inclus dans au moins un
$U_{s_j}$) de l'adh\'erence d'un voisinage ouvert du support de $\omega$~:
$$
\big\langle G_{\lambda_1,...,\lambda_p}^{s_1,...,s_p}\,,\,
\omega \big\rangle =
\sum_\iota \big\langle G_{\lambda_1,...,\lambda_p}^{s_1,...,s_p}\,,\,
\varphi_\iota\, \omega \big\rangle.
$$
Il r\'esulte du th\'eor\`eme \ref{theorempfonctions} et  des \'egalit\'es \eqref{eqgreen3}, \eqref{expressionpsectionscas1} et 
\eqref{expressionpsectionscas2} que dans chaque ouvert
$U_{s_j}$ ($j=1,...,p$), on a, pour la convergence au sens de la limite faible des courants dans $U$,
\begin{equation}\label{green2}
\begin{split}
& \lim\limits_{\stackrel{(\lambda_1,...,\lambda_p)
\rightarrow (0,...,0)}{\lambda_1\not=0,...,\lambda_p \not=0}}
\Big(d'd'' (G^{s_1,...,s_p}_{\lambda_1,...,\lambda_p})\Big)
= -\bigwedge\limits_{j=1}^p \Big( [{\rm div}(s_j)] - c_1(\mathscr{L}_j,\|\ \|_j)\Big) \\
& = - [{\rm div}(s_1)] \wedge \dots \wedge [{\rm div}(s_p)] + \\
& + \sum\limits_{k=1}^{p-1}
\sum\limits_{1\leq j_1 < \dots < j_k\leq p}
(-1)^{p-1-j} \Big(\bigwedge\limits_{\ell =1}^k
[{\rm div}(s_{j_\ell})]\Big)
\wedge \Big(\bigwedge\limits_{j\not = j_1,...,j_k}
c_1(\mathscr{L}_j,\|\ \|_j)\Big),
\end{split}
\end{equation}
avec
\begin{multline*}
-\bigwedge\limits_{j=1}^p \Big( [{\rm div}(s_j)] - c_1(\mathscr{L}_j,\|\ \|_j)\Big)~:=\\
\begin{cases}
 \Big(c_1(\mathscr L_p,\|\ \|_p)-[{\rm div}(s_p)]\Big)\bigwedge\limits_{j=1}^{p-1} \Big( [{\rm div}(s_j)]
 - c_1(\mathscr{L}_j,\|\ \|_j)\Big) \mbox{ dans } U_{s_p}\\
 \Big(c_1(\mathscr L_{j_0},\|\ \|_{j_0})-[{\rm div}(s_{j_0})]\Big)\bigwedge\limits_{\stackrel{ j=1}{j\neq j_0}}^p \Big( [{\rm div}(s_j)]
 - c_1(\mathscr{L}_j,\|\ \|_j)\Big) \mbox{ dans } U_{s_{j_0}}.
\end{cases}
\end{multline*}
La formule asymptotique \eqref{green2} est donc valide au sens des courants dans $U$ puisque l'on peut utiliser un partitionnement de 
l'unit\'e subordonn\'e au 
recouvrement d'une forme test ${\rm Supp}\, \omega$ par les $U_{s_j}$. Pour chaque valeur de $k$ entre $1$ et $p-1$, pour 
chaque suite de $k$ indices distincts
$1\leq j_1< \dots < j_k \leq p$, on substitue au second membre de la relation \eqref{green2} les relations asymptotiques
\begin{equation*}
\begin{split}
& \bigwedge\limits_{\ell = 1}^k [{\rm div}(s_k)] = \Big[\bigwedge_{\ell =1}^k c_1(\mathscr{L}_{j_\ell},\|\ \|_{j_\ell})\Big] \\
& -
\lim\limits_{\stackrel{(\lambda_{j_1},...,\lambda_{j_k})
\rightarrow (0,...,0)}{\lambda_{j_1}\not=0,...,\lambda_{j_k} \not=0}}
\Big(d'd'' \Big(G^{s_{j_1},...,s_{j_k}}_{\lambda_{j_1},...,\lambda_{j_k}} \\
& \qquad \qquad \qquad \qquad \qquad \quad
 + \sum\limits_{\kappa=1}^{k-1}\sum\limits_{1\leq \iota_1<\dots < \iota_\kappa\leq k}
\Big(\bigwedge_{\iota \not= \iota_1,...,\iota_{\kappa}} c_1(\mathscr{L}_{j_\iota},\|\ \|_{j_\iota})\Big)
\wedge G^{s_{j_{\iota_1}},...,s_{j_{\iota_k}}}_{\lambda_{j_{\iota_1}},...,\lambda_{j_{\iota_{\kappa}}}}\Big)\Big)
\end{split}
\end{equation*}
avant de regrouper dans le membre de gauche de \eqref{green2} ainsi transform\'e tous les termes s'exprimant comme des limites 
(et devant lesquels figure l'action de 
l'op\'erateur de Green $d'd''$).
\end{proof}

\noindent
Le th\'eor\`eme \ref{theorempfonctions2} est \`a rapprocher de la construction de courants de Green normalis\'es inspir\'ee par 
la m\'ethode de prolongement analytique, 
telle qu'elle est par exemple d\'ecrite dans \cite[section 3]{BY98}. On note que dans ce nouveau cadre on dispose de $p$ param\`etres $\lambda_1,...,\lambda_p$ (au lieu 
d'un seul, comme dans \cite[proposition 4]{BY98}) pour construire une solution $G^{s_1,...,s_p}_{\lambda_1,...,\lambda_p}$ \`a une approximation de l'\'equation de Green 
normalis\'ee
\begin{equation}\label{greennorm}
d'd'' G + \bigwedge\limits_{j=1}^p [{\rm div}(s_j)] = \Big[\bigwedge\limits_{j=1}^p
c_1(\mathscr{L}_j,\|\ \|_j)\Big].
\end{equation}
Mais il est par contre possible (dans ce cadre des espaces $\K$-analytiques au sens de Berkovich) de supposer les sections $s_j$ m\'eromorphes et non seulement holomorphes 
comme c'\'etait le cas dans le cadre analytique complexe~; le fait que toute $(\ell,k)$-forme lisse \`a support compact sur un bon espace analytique $Y$ 
de dimension $k$ soit telle que son support \'evite tout ferm\'e de Zariski d'int\'erieur non vide de $Y$
(voir \cite[lemme 3.2.5]{ChLD}) joue dans ce cadre non archim\'edien un r\^ole majeur.
Par contre, il convient de faire, lorsque l'on travaille dans un tel cadre, une hypoth\`ese plus forte concernant les supports des diviseurs que celle consistant \`a juste 
supposer que ces supports s'intersectent proprement~; il est n\'ecessaire en effet de supposer que c'est aussi le cas pour toute sous-famille extraite de la famille des 
supports des $s_j$, $j=1,...,p$.
\vskip 1mm
\noindent
Pour construire une solution $G$ \`a l'\'equation de Green normalis\'ee
\eqref{greennorm} (et non seulement une solution \`a une approximation de cette \'equation
suivant \eqref{greentheo}), il convient par exemple de complexifier le $\R$-espace vectoriel
$\mathscr{D}_{n-p+1,n-p+1}(U)$ et de former, dans ce complexifi\'e $\mathscr{D}_{n-p+1,n-p+1}(U)\otimes_\R \C$, le courant
\begin{equation}\label{greenexplicit}
\begin{split}
& G^{s_1,...,s_p} := \frac{1}{(2i\pi)^p} \times \\
& \int_{\Gamma_{r_1,...,r_p}}
\Big(G^{s_1,...,s_p}_{\lambda_1,...,\lambda_p} +
\sum\limits_{k=1}^{p-1}\sum\limits_{1\leq j_1<\dots < j_k\leq p}
\Big(\bigwedge_{j \not= j_1,...,j_k} c_1(\mathscr{L}_{j},\|\ \|_{j})\Big)
\wedge G^{s_{j_1},...,s_{j_k}}_{\lambda_{j_1},...,\lambda_{j_k}}\Big)\, \bigwedge_1^p\frac{d\lambda_j}{\lambda_j}
\end{split}
\end{equation}
o\`u $r_1,...,r_p>0$,
$$
\Gamma_{r_1,...,r_p}~:
(t_1,...,t_p)\in [0,1]^p
\mapsto(r_1 e^{2i\pi t_1},...,r_p e^{2i\pi t_p}) = (\lambda_1,...,\lambda_p).  
$$
Il est en effet possible
de supposer dans les th\'eor\`emes \ref{theorempfonctions} et \ref{theorempfonctions2} que les param\`etres $\lambda_1,...,\lambda_p$ sont dans $\C^*$ et non plus dans $\R^*$.
Le courant \og moyen\fg\ $G^{s_1,...,s_p}$ ainsi construit est un courant r\'eel car $\overline{G^{s}_\lambda + \cdots}
= G^s_{\bar \lambda} + \cdots$ et que la forme 
$\Gamma_{r_1,...,r_p}^* \big(\bigwedge d\lambda_j/(2i\pi \lambda_j)\big)$ est la forme  r\'eelle $\bigwedge_j \big(d\theta_j/(2\pi)\big)$. 
Ce courant d\'epend naturellement de l'ordre dans lequel sont consid\'er\'es les fibr\'es $\mathscr{L}_1,...,\mathscr{L}_p$ et les sections m\'eromorphes qui y sont 
attach\'ees. Il r\'esulte des th\'eor\`emes \ref{theorempfonctions} et
\ref{theorempfonctions2} (repris en supposant cette fois les $\lambda_j$ dans $\C^*$) que le courant $G^{s_1,...,s_p}$ est solution de l'\'equation de Green normalis\'ee
\eqref{greennorm}.
\section{Approche du type Mellin aux courants de Vogel dans le cadre alg\'ebrique}\label{sectionVogel}
Dans cette section, nous nous pla\c cons dans le cadre alg\'ebrique et consid\'erons une vari\'et\'e alg\'ebrique projective $X$ de dimension $n$ d\'efinie au-dessus du corps 
valu\'e $\K$, un entier $m\in \N^*$, et la vari\'et\'e alg\'ebrique projective produit $\P^m_\K \times X$ de dimension $n+m$. On se donne un fibr\'e en droites 
$L_X \rightarrow X$ au-dessus de $X$ et des sections globales $s_0,...,s_m$ du fibr\'e $L_X$ au-dessus de $X$. Comme  le foncteur d'analytification est compatible 
avec le produit fibr\'e, on a $\big(\P^m_\K\times X\big)^{\rm an} =
(\P^m_\K)^{\rm an} \times X^{\rm an}$.
\vskip 1mm
\noindent
Soit $\|\ \|$ une m\'etrique semi-positive sur le fibr\'e en droites $\mathcal O_{\P^m_\K} (1) \rightarrow \P^m_\K$. 
On sait (voir \cite{ChL06, ChL11, Gub08, BFJ},  \cite[section 6.9]{ChLD} ou aussi le survey
\cite[section 3.3]{Yuan}) lui associer une mesure
de Monge-Amp\`ere que l'on note en effet $\big(c_1(\mathcal O_{\P^m(\K)}(1),\|\ \|)\big)^{\wedge^m}$
sur l'analytification $(\P^m_\K)^{\rm an}$ telle que
$$
\int_{(\P^m_\K)^{\rm an}} \big(c_1(\mathcal O_{\P^m_\K}(1),\|\ \|)\big)^{\wedge^m}(\kappa) =
\deg_{\mathcal O_{\P^m_\K}(1)} (\P^m_\K) = 1.
$$
\vskip 1mm
\noindent
Lorsque le fibr\'e ainsi m\'etris\'e $\overline{\big(\mathscr O_{\P^m_\K}(1)\big)^{\rm an}}$ est un fibr\'e vectoriel PL
(voir \cite[d\'efinition 6.2.9]{ChLD}, ceci signifiant essentiellement que l'on puisse disposer localement de rep\`eres orthonorm\'es), la mesure de Monge-Amp\`ere 
$\big(c_1(\mathcal O_{\P^m_\K}(1),\|\ \|)\big)^{\wedge^m}$ est une mesure atomique support\'ee par un sous-ensemble discret $S_{\|\ \|}$, i.e il 
existe des r\'eels positifs $\gamma_\eta$ tels que pour toute fonction $\varphi$ continue de $(\P^m_\K)^{\rm an}$ dans $\R$
(\cite{ChLD}, proposition 6.9.2 et
d\'efinition 6.7.2 pour la d\'efinition de $S_{\|\ \|}$)

\begin{equation}\label{dirac}
\int_{(\P^m_\K)^{\rm an}} \varphi(\kappa) \,
\big(c_1(\mathcal O_{\P^m_\K}(1),\|\ \|_{\rm moy})\big)^{\wedge^m}(\kappa) =
\sum\limits_{\eta\in S_{\|\ \|}} \gamma_\eta\,  \varphi(\eta).
\end{equation}
On supposera par la suite que l'on est toujours dans cette situation (m\'etrique $\|\ \|$ semi-positive et fibr\'e m\'etris\'e 
$\overline{\big(\mathscr O_{\P^m_\K}(1)\big)^{\rm an}}$ PL)~;
si la m\'etrique n'est plus semi-positive mais que le fibr\'e m\'etris\'e $\overline{\big(\mathscr O_{\P^m_\K}(1)\big)^{\rm an}}$ est toujours PL, 
les masses $\gamma_\eta$ dans \eqref{dirac} sont
des nombres r\'eels non n\'ecessairement positifs ou nuls.

\begin{example} {\rm Dans le cas particulier o\`u $\|\ \|$ d\'esigne la m\'etrique standard
$$
\|\langle \kappa,z\rangle\|_{\rm std} = \frac{|\langle \kappa,z\rangle|}{\max (|z_0|,...,|z_m|)},\quad \kappa \in \K^{m+1}\setminus \{(0,...,0)\},\quad
z =[z_0:\dots : z_m]
$$
(qui est bien semi-positive, se r\'ef\'erer par exemple \`a la section 1.3 de \cite{ChL11}),
la mesure de Monge-Amp\`ere $\big(c_1(\mathscr O_{\P^m_\K}(1),\|\ \|)\big)^{\wedge^m}$
sur $(\P^m_\K)^{\rm an}$ qui lui est attach\'ee est la mesure de Dirac $\delta_\xi$ au point de Gau\ss.
}
\end{example}

\begin{remark}{\rm Dans le cadre archim\'edien ($\K=\C$), la m\'etrique sur $\P^m_\C$ construite
sur le m\^eme principe que celui sur lequel est construite $\big(c_1(\mathcal O_{\P^m_\K}(1),\|\ \|)\big)^{\wedge^m}$
s'obtient comme image directe de la mesure de Haar normalis\'ee sur le tore
$$
\{[z_0:\dots : z_m] \in \P^m_\C\,;\, |z_0| = \dots = |z_m|\}.
$$
Notons que la m\'etrique $\|\ \|_{\rm std}$ est continue mais non lisse.
Toujours dans ce cadre archim\'edien, mais lorsque la m\'etrique $\|\ \|$ est la m\'etrique de Fubini-Study (qui, elle, est lisse)
$$
\|\langle \kappa,z\rangle\|_{\rm fs} = \frac{|\langle \kappa,z\rangle|}
{\sqrt{|z_0|^2+\dots + |z_m|^2}},\quad \kappa \in \C^{m+1}\setminus \{(0,...,0)\},\quad
z =[z_0:\dots : z_m],
$$
on obtient naturellement $\big(c_1(\mathscr O_{\P^m_\C}(1),\|\ \|_{\rm fs})\big)^{\wedge^m} =
(dd^c \log \|z\|^2)^{\wedge^m}$, m\'etrique pour laquelle on rappelle que l'on dispose de la formule de Crofton~: si $f_0,...,f_m$ sont $m+1$ \'el\'ements de 
$\mathcal O_{X}(U)$ (o\`u $U$ d\'esigne un ouvert d'un espace analytique complexe
$X$), on a, lorsque les $f_j$ n'ont aucun z\'ero commun dans $U$~:
\begin{equation}\label{crofton}
dd^c (\log \|f(x)\|^2_{\rm eucl}) = \int_{[\kappa_0:\cdots : \kappa_m] \in \P^m_\C}
\big[{\rm div}(\langle \kappa,f(x)\rangle\big] \wedge
\big(c_1(\mathcal O_{\P^m_\C},\|\ \|_{\rm fs})\big)^{\wedge^m}(\kappa),
\end{equation}
$\|\ \|_{\rm eucl}$ d\'esignant la norme euclidienne sur $\C^{n+1}$
(voir par exemple \cite{ASWY14}, lemme 6.3) et $f=(f_0,...,f_m)$.}
\end{remark}
\vskip 2mm
\noindent
Dans le cadre non archim\'edien (alg\'ebrique), nous pouvons \'enoncer ce qui peut \^etre consid\'er\'e comme le pendant
de la formule de Crofton. On consid\`ere les analytifications $\big(\mathcal O_{\P^m_\K}(1)\big)^{\rm an}$ et $\mathscr{L}_X^{\rm an}$ respectivement des fibr\'es en 
droites $\mathcal O_{\P^m_\K}(1)$
et $L_X$ (consid\'er\'es tous deux comme des fibr\'es en droites au-dessus de la vari\'et\'e
alg\'ebrique projective produit $\P^m_\K\times X$) et la section du fibr\'e produit
$\big(\mathcal O_{\P^m_\K}(1)\big)^{\rm an}\otimes \mathscr{L}_X^{\rm an}$ obtenue en analytifiant la section $(\kappa,z) \mapsto \langle \kappa,f(z)\rangle$ du fibr\'e
en droites produit $\mathcal O_{\P^m_\K}(1) \otimes L_X$. On notera $[{\rm div}(\langle \kappa,f\rangle)]$ le courant d'int\'egration correspondant \`a ce diviseur effectif 
sur $\big(\P^{m}_\K \times X\big)^{\rm an} = (\P^m_\K)^{\rm an}
\times X^{\rm an}$. On suppose ici que le fibr\'e m\'etris\'e
$\overline{\big(\mathcal O_{\P^m_\K}(1)\big)^{\rm an}}$ est PL. La formule de Crofton
\eqref{crofton} dans ce cadre non archim\'edien s'\'enonce alors ainsi : \'etant donn\'ees des sections holomorphes $s_0,...,s_m$ de $L_X$ telles que 
$\bigcap_1^m {\rm Supp}([{\rm div}(s_j)]) = \emptyset$ et $s^{\rm an}_j$ ($j=0,...,m$) leurs analytifications,
on a\footnote{Il faut comprendre ici $\langle \kappa,s^{\rm an}\rangle$ comme l'analytification de la section $\langle \kappa,s\rangle$ du fibr\'e
$\mathcal O_{\P^m_\K} (1) \otimes L_X\rightarrow \P^m_\K\times X$ en une section du fibr\'e
$\big(\mathcal O_{\P^m_\K}(1)\big)^{\rm an}\otimes \mathscr{L}_X^{\rm an} \rightarrow (\P^m_\K)^{\rm an} \times X^{\rm an}$.}~:
\begin{equation}\label{croftonarch}
\begin{split}
& d'd''\Big(\sum\limits_{\eta \in S_{\|\ \|}} \lambda_\eta\, \big[\log \|\langle \eta,s^{\rm an}(x)\rangle\|\big]\Big) \\
& = \int_{\kappa \in (\P^m_\K)^{\rm an}} \big[{\rm div} (\langle \kappa,s^{\rm an}(x)\rangle)\big]\wedge
\big(c_1(\mathcal O_{\P^m_\K}(1),\|\ \|)\big)^{\wedge^m}(\kappa).
\end{split}
\end{equation}
Elle se r\'eduit pour la m\'etrique standard \`a l'\'equation de Lelong-Poincar\'e
$$
d'd'' \big(\big[\log \|\langle \xi,s^{\rm an}(x)\rangle\|_{\rm sdt}\big]\big) =
\big[{\rm div}\langle \xi,s^{\rm an}(x)\rangle\big],
$$
o\`u $\xi\in (\P^m_\K)^{\rm an}$ d\'esigne le point de Gau\ss.
\vskip 2mm
\noindent
Soit $\pi~:\widehat{X}^{\rm an}\to X^{\rm an}$ un \'eclatement normalis\'e de $X^{\rm an}$ (\cite{Con99}, commentaire avant le lemme 2.2.1). 
Les composantes irr\'eductibles de $X^{\rm an}$ sont des sous-ensembles analytiques de la forme $U_i=\pi(\widehat{U_i})$, avec $\widehat{U_i}$ les composantes connexes de l'\'eclatement normalis\'e $\pi~:\widehat{X}^{\rm an}\to X^{\rm an}$. On dit que $X^{\rm an}$ est irr\'eductible si il est non vide et admet une unique composante irr\'eductible (\cite[lemme 2.2.1 et d\'efinition 2.2.2]{Con99}).
\vskip 1mm
\noindent
Pour d\'efinir une approche de type Mellin au cycle de Vogel attach\'e \`a une famille
$(s_0,...,s_m)$ de sections d'un fibr\'e en droites $L_X \rightarrow X$, une fois choisie
une m\'etrique lisse $\|\ \|_{L_X}$ sur le fibr\'e $\mathscr{L}_X^{\rm an} \rightarrow X^{\rm an}$, il suffit d'exploiter de mani\`ere it\'erative le lemme suivant, 
directement inspir\'e de \cite[lemme 3.1]{ASWY14}.

\begin{lemma}\label{lemmeintersection} Soit $U$ un ouvert   d'un bon $\K$-espace analytique au sens de Berkovich
$X^{\rm an}$ de dimension $n$,
$$
Z = \sum\limits_{\iota} \mu_\iota\, Z_\iota
$$
une combinaison formelle localement finie de sous-ensembles analytiques de $U$ de dimension pure $n-p$ ($1\leq p \leq n-1$) et $s$ une section holomorphe
d'un fibr\'e m\'etris\'e $\mathscr{L}^{\rm an}\rightarrow U$ \'equip\'e d'une m\'etrique lisse $\|\ \|$ de premi\`ere forme de Chern 
$c_1(\mathscr{L}^{\rm an},\|\ \|)$.
On note
\begin{equation*}
\begin{split}
& Z^{{\rm div}(s)} := \sum\limits_{\big\{\iota\,;\,
{\rm Supp} (Z_\iota)\, \subset\, {\rm Supp} ({\rm div}(s))\big\}}
\mu_\iota  Z_\iota \\
& Z^{U \setminus {\rm div}(s)} := \sum\limits_{\big\{\iota\,;\,
{\rm Supp} ( Z_\iota)\, \not\subset\, {\rm Supp} ({\rm div}(s))\big\}}
\mu_\iota Z_\iota.
\end{split}
\end{equation*}
Soit $\lambda \in \{\lambda \in \C\,;\, {\rm Re}\lambda >0\}$.
On d\'efinit $\tilde T^s_\lambda \in \big(\mathscr{D}_{n-p,n-p}(U) \oplus  \mathscr{D}_{n-p-1,n-p-1}(U)\big)\otimes_\R \C$ comme
\begin{equation}
\tilde T^s_\lambda := \sum\limits_{\iota} 
\Big([1 - \|s\|^{\lambda}] + \big[\|s\|^\lambda \ c_1(\mathscr{L}^{\rm an},\|\ \|)\big] + d'd'' \Big[\frac{\|s\|^\lambda}{\lambda}\Big] \Big) \wedge [Z_\iota], 
\end{equation}
o\`u le courant $[\|s\|^\lambda]\, [Z_\iota]$ est d\'efini \`a partir du lemme 
4.6.1 de \cite{ChLD} comme l'image directe par $i_{Z_\iota}~: Z_\iota \rightarrow U$
du courant $[\|s\circ i_{Z_\iota}\|^\lambda]$. 
On a
$$
\tilde T^s_\lambda =  \big[Z^{{\rm div}(s)}\big] +\big[\|s\|^\lambda\,
c_1(\mathscr{L}^{\rm an},\|\ \|)\big] \wedge \big[Z^{U \setminus {\rm div}(s)}\big]  +
d'd'' \big(\Big[\frac{\|s\|^\lambda}{\lambda}\Big]\big)\wedge [Z^{U\setminus {\rm div}(s)}]
$$
et, par cons\'equent :
\begin{equation}\label{vogel1}
\lim\limits_{\stackrel{\lambda \rightarrow 0}{\rm Re\, \lambda >0}}
\tilde T^s_\lambda = \big[Z^{{\rm div}(s)}\big] +
[{\rm div}(s)] \wedge \big[Z^{U \setminus {\rm div}(s)}\big].
\end{equation}
\end{lemma}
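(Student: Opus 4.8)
The plan is to establish the displayed identity first, by reducing to a single component $Z_\iota$ and pulling back along the inclusion $i_\iota\colon Z_\iota\to U$, and then to deduce \eqref{vogel1} by letting $\lambda\to 0$ through the Poincar\'e--Lelong formula of Section \ref{section1}. By linearity in $\iota$ it suffices to treat one $Z_\iota$; set $s_\iota:=s\circ i_\iota$, a holomorphic section of $i_\iota^*\mathscr{L}^{\rm an}$. By the definitions recalled just after the statement together with \cite[lemme 4.6.1]{ChLD}, each of the currents $[1-\|s\|^\lambda]\wedge[Z_\iota]$, $[\|s\|^\lambda\,c_1(\mathscr{L}^{\rm an},\|\ \|)]\wedge[Z_\iota]$ and $d'd''[\|s\|^\lambda/\lambda]\wedge[Z_\iota]$ is the direct image $(i_\iota)_*$ of the corresponding current on $Z_\iota$ built from $\|s_\iota\|^\lambda$, a function that is continuous across the zero locus of $s_\iota$ precisely because ${\rm Re}\,\lambda>0$.

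The computation then splits according to whether ${\rm Supp}(Z_\iota)\subset{\rm Supp}({\rm div}(s))$ or not. In the first case $s_\iota\equiv 0$, hence $\|s_\iota\|^\lambda\equiv 0$; the two bidimension-$(n-p-1,n-p-1)$ terms vanish identically while $[1-\|s\|^\lambda]\wedge[Z_\iota]=[Z_\iota]$, so that summation over these indices reproduces the stable part $[Z^{{\rm div}(s)}]$. In the second case $s_\iota$ is regular and invertible on the Zariski-dense open set obtained by deleting its zero locus; there the two terms $[\|s\|^\lambda\,c_1(\mathscr{L}^{\rm an},\|\ \|)]\wedge[Z_\iota]$ and $d'd''[\|s\|^\lambda/\lambda]\wedge[Z_\iota]$ supply the bidimension-$(n-p-1,n-p-1)$ component, whereas $[1-\|s\|^\lambda]\wedge[Z_\iota]$ supplies the bidimension-$(n-p,n-p)$ one. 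Since the first case contributes nothing to the lower component, summing over all $\iota$ yields $\tilde T^s_\lambda$ in the announced form, the $\mathscr{D}_{n-p,n-p}$-component reducing to $[Z^{{\rm div}(s)}]$ up to the non-divisorial remainder $[1-\|s\|^\lambda]\wedge[Z^{U\setminus{\rm div}(s)}]$ dealt with below.

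To obtain \eqref{vogel1} I let $\lambda\to 0$ with ${\rm Re}\,\lambda>0$. On the compact support of a test form---which by \cite[lemme 3.2.5]{ChLD} avoids every Zariski-closed subset of empty interior, in particular the zero loci of the $s_\iota$---Lebesgue's dominated convergence theorem applies verbatim as in Section \ref{section1}, giving $[\|s\|^\lambda\,c_1(\mathscr{L}^{\rm an},\|\ \|)]\wedge[Z^{U\setminus{\rm div}(s)}]\to c_1(\mathscr{L}^{\rm an},\|\ \|)\wedge[Z^{U\setminus{\rm div}(s)}]$. Applying \eqref{poinclel2} on each $Z_\iota$ to $s_\iota$ (recall $G^{s}_\lambda=-[\|s\|^\lambda/\lambda]$) gives $d'd''[\|s\|^\lambda/\lambda]\wedge[Z^{U\setminus{\rm div}(s)}]\to\big([{\rm div}(s)]-c_1(\mathscr{L}^{\rm an},\|\ \|)\big)\wedge[Z^{U\setminus{\rm div}(s)}]$. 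The two Chern contributions cancel, leaving $[{\rm div}(s)]\wedge[Z^{U\setminus{\rm div}(s)}]$, and adding the stable part $[Z^{{\rm div}(s)}]$ produces \eqref{vogel1}.

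The key point---and where ${\rm Re}\,\lambda>0$ is indispensable---is the fate of the remainder $[1-\|s\|^\lambda]\wedge[Z^{U\setminus{\rm div}(s)}]$: its density $1-\|s_\iota\|^\lambda$ converges pointwise to the characteristic function of the zero locus of $s_\iota$, a Zariski-closed subset of $Z_\iota$ of strictly smaller dimension, hence of measure zero for the integration of top-degree $(n-p,n-p)$-forms on $Z_\iota$; the remainder therefore disappears in the limit and does not affect \eqref{vogel1}. I expect the main technical obstacle to lie exactly here, namely in turning this pointwise vanishing, the two dominated-convergence passages, and the very construction of the products $[\,\cdot\,]\wedge[Z_\iota]$ via \cite[lemme 4.6.1]{ChLD}, into fully rigorous statements within the Chambert-Loir--Ducros integration calculus; the rest is the termwise bookkeeping described above.
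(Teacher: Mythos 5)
Your argument is correct and follows the same route as the paper, whose entire proof is the single sentence that the lemma is immediate from the Lelong--Poincar\'e equation; you simply supply the details (componentwise splitting according to whether ${\rm Supp}(Z_\iota)\subset{\rm Supp}({\rm div}(s))$, pullback along $i_{Z_\iota}$, the identity \eqref{poinclel2} applied to $s\circ i_{Z_\iota}$, and dominated convergence) that the paper leaves implicit. Your explicit handling of the remainder $[1-\|s\|^{\lambda}]\wedge\big[Z^{U\setminus{\rm div}(s)}\big]$ --- which is not identically zero for fixed $\lambda$ but tends to $0$ because $1-\|s\circ i_{Z_\iota}\|^{\lambda}$ converges to the indicator of the Zariski-closed zero locus, which the supports of test forms avoid by \cite[lemme 3.2.5]{ChLD} --- is in fact more careful than the displayed intermediate identity of the lemma, which silently drops this term.
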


\begin{proof} Ce lemme r\'esulte imm\'ediatement de l'\'equation de Lelong Poincar\'e.
\end{proof}

\noindent
Suivant l'approche propos\'ee dans \cite{ASWY14} (voir en particulier le th\'eor\`eme 6.2 dans cette r\'ef\'erence) et la transcription \eqref{croftonarch} 
que nous avons propos\'e pour la formule de Crofton dans le cadre non archim\'edien, il est naturel de d\'efinir ainsi le courant de Vogel (et son approche 
du type Mellin) attach\'e \`a $m+1$ sections globales $s_0,...,s_m$ d'un fibr\'e $L_X \rightarrow X$ au-dessus d'une vari\'et\'e projective $X$ d\'efinie 
au-dessus d'un corps valu\'e $\K$, une fois choisie une m\'etrique lisse
$\|\ \|_{L_X}$ sur le fibr\'e $\mathscr{L}^{\rm an}$. On note
$\|\ \|_{L_X,{\rm moy}}$ la m\'etrique induite sur le fibr\'e
$\big(\mathcal O_{\P^m_\K}(1)\big)^{\rm an}\otimes \mathscr{L}_X^{\rm an}$ par le choix des
m\'etriques $\|\ \|$ sur $\big(\mathcal O_{\P^m_\K}(1)\big)^{\rm an}$ et
$\|\ \|_{L_X}$ sur $\mathscr{L}^{\rm an}$.

\begin{definition} {\rm Le courant de Vogel attach\'e \`a $s_0,...,s_m$ est d\'efini comme
la limite suivante au sens (faible) des courants sur $X^{\rm an}$~:
\begin{equation}
\begin{split}
& \lim\limits_{\lambda_\nu \rightarrow 0}
\Big( \lim\limits_{\lambda_{\nu-1} \rightarrow 0} \Big(
\cdots \Big(\lim\limits_{\lambda_1 \rightarrow 0} \, \int\limits_{\big((\P^m_\K)^{\rm an}\big)^\nu}  
\Big(\bigwedge\limits_{j=1}^\nu  \big(c_1(\mathcal O_{\P^m_\K}(1),\|\ \|)\big)^{\wedge^m}(\kappa_j)\Big) \wedge \\
& \bigwedge\limits_{j=1}^\nu
\Big([1 - \|\langle \kappa_j,s^{\rm an}\rangle\|_{L_X,{\rm moy}}^{\lambda_j}]
+ \big[\|\langle \kappa_j,s^{\rm an}\rangle\|_{L_X,{\rm moy}}^{\lambda_j}\,
(c_1(L_X,\|\ \|)\big] \\
& \qquad \qquad  \qquad \qquad  \qquad \quad
+ d'd''\Big( \Big[\frac{\|\langle \kappa_j,s^{\rm an}\rangle\|_{L_X,{\rm moy}}^{\lambda_j}}{\lambda_j}\Big]\Big)\Big)\Big)
\cdots \Big)\Big)
\end{split}
\end{equation}
lorsque $\nu := \min(m+1,n+1)$, o\`u le produit des courants 
se trouve justifi\'e par le lemme 4.6.1 de \cite{ChLD} si l'on tient compte de 
\eqref{vogel1} et du fait que les limites suivant $\lambda_1,...,\lambda_\nu$ sont prises les unes apr\`es les autres. 
}
\end{definition}

\begin{remark} {\rm Du fait que la mesure correspondant aux courant $[c_1(\mathcal O_{\P^m_\K}(1),\|\ \|_{\rm moy})]^{\wedge^m}$ est une mesure atomique 
(combinaison lin\'eaire de masses de Dirac), il r\'esulte du lemme \ref{lemmeintersection} que le courant de Vogel est un courant d'int\'egration sur un 
cycle analytique (non de dimension pure) de $X^{\rm an}$ que l'on convient de d\'efinir comme le cycle (moyen) de Vogel.}
\end{remark}

\section{Approche de type Mellin aux courants de Segre dans le cadre alg\'ebrique}\label{sectionsegre}

Soit $X$ une vari\'et\'e alg\'ebrique projective de dimension $n$ d\'efinie sur $\K$ et
$X^{\rm an}$ son analytification au sens de Berkovich. On consid\`ere un fibr\'e
alg\'ebrique $E_X \rightarrow X$ de rang $m+1$ au-dessus de $X$ et on \'equipe son analytifi\'e $E_X^{\rm an} \rightarrow X^{\rm an}$  
d'une m\'etrique formelle PL (voir \cite[d\'efinition 6.2.9]{ChLD})
not\'ee $\|\ \|_{E_X^{\rm an}}$
au-dessus de l'analytification $X^{\rm an}$.

\begin{example}{\rm Si $X^{\rm an}=(\P^n_\K)^{\rm an}$ et si
$E_X^{\rm an} = \big(\mathscr O_X(d_0)\big)^{\rm an} \oplus \dots \oplus \big(\mathscr O_X(d_m)\big)^{\rm an}$, on peut
\'equiper chaque $\big(\mathscr O_X (d_j)\big)^{\rm an}$ de la m\'etrique standard
$$
\|s_j([z_0:\dots :z_n])\|_{\rm std} = \frac{|s_j(z_0,...,z_n)|}{\max_\ell |z_\ell|^{d_j}}
$$
qui est une m\'etrique globalement psh-approchable
\cite[proposition 6.3.2]{ChLD}
et le fibr\'e $E_X^{\rm an}$ de la m\'etrique
$$
\|s\|_{E_X^{\rm an}} := \max_{j} \|s_j\|_{\rm std}.
$$
}
\end{example}
\vskip 2mm
\noindent
Soit $s \in \mathscr O_X(E_X)$ une section globale de $E_X$
dont on notera $s^{\rm an}$ l'analytification. Soit $\pi~: \widehat X \rightarrow X$ l'\'eclatement normalis\'e de $X$ suivant le faisceau d'id\'eaux de $\mathcal O_X$ 
induit par
$s$ et $\pi^{\rm an}~: \widehat{X}^{\rm an} \rightarrow X^{\rm an}$ son analytification. On note $L_{\widehat X}$ le fibr\'e en droites correspondant 
au diviseur exceptionnel $D_s$ de $\pi~: \widehat X \rightarrow X$ et $\widehat{\mathscr{L}}^{\rm an}$ le fibr\'e que $L_{\widehat X}$ induit au-dessus de l'analytification 
$\widehat{X}^{\rm an}$. On a (du fait de la  d\'efinition de l'\'eclatement normalis\'e $\pi$)
$\pi^* (s) = \sigma\otimes \tau $, o\`u $\sigma$ est une section globale du fibr\'e en droites $L_{\widehat X}$ et $\tau$ une section ne s'annulant pas du fibr\'e 
$F_{\widehat X} := L_{\widehat X}^{-1} \otimes \pi^* (E_X)$ (de rang $m+1$ comme $E_X$, et dont on notera 
$F_{\widehat X}^{\rm an}\rightarrow \widehat{X}^{\rm an}$ l'analytification).
\vskip 1mm
\noindent
Comme dans la section 4 de \cite{ASWY14}, on \'equipe le fibr\'e $L_{\widehat X}$ de la m\'etrique $\|\ \|_\tau$
telle que $\|\sigma\|_\tau = \|\pi^*(s)\|_{\pi^*(E_X)}$.
On note $\sigma^{\rm an}$ et $\tau^{\rm an}$ les sections holomorphes
respectivement des fibr\'es $\widehat {\mathscr L}^{\rm an}$ et $F_{\widehat X}^{\rm an}$
au-dessus de $\widehat {X}^{\rm an}$ d\'eduites de $\sigma$ et $\tau$ par analytification. On note $\|\ \|_{\tau^{\rm an}}$ la m\'etrique formelle d\'efinie 
sur le fibr\'e $\widehat{\mathscr{L}}^{\rm an}$.
Le courant $-d'd'' \log \|\tau^{\rm an}\|_{\pi^*(E_X)}$
(calcul\'e ici localement en choisissant arbitrairement une trivialisation locale de
$(\widehat{\mathscr{L}}^{\rm an})^{-1}$) est le courant de Chern $c_1(\widehat{\mathscr{L}}^{\rm an},\|\ \|_{\tau^{\rm an}})$.
\\
Si $a_0,...,a_\mu$ sont des fonctions
r\'eguli\`eres globalement inversibles dans un ouvert $U$ de
$X^{\rm an}$, la fonction $\log \max(|a_0|,...,|a_\mu|)$ est globalement
psh approchable (voir \cite[Proposition 6.8.3]{ChLD}) dans $U$.
Comme la m\'etrique $\|\ \|_{E^{\rm an}_X}$ est suppos\'ee PL, il en est de m\^eme
pour la m\'etrique $\|\ \|_{\pi^*(E^{\rm an}_X)}$ sur
$\widehat {X}^{\rm an}$ \cite[6.2.15]{ChLD}. Par cons\'equent, la fonction
$-d'd'' \log \|\tau^{\rm an}\|_{\pi^*( E^{\rm an}_X)}$ est une fonction globalement psh-approchable au voisinage de tout point $\hat x$ o\`u $\sigma$ n'est pas 
inversible (il suffit pour cela de travailler dans un ouvert de carte $U_{\pi(\hat x)}$ au-dessus duquel on dispose d'un rep\`ere orthonorm\'e pour le fibr\'e 
$E^{\rm an}_X$ et de consid\'erer le voisinage
$\pi^{-1} (U_{\pi(x)})$ de $\hat x$)
et l'on sait donc donner un sens (en approchant cette fonction par des fonctions psh lisses) aux puissances ext\'erieures 
$\big(-c_1(\widehat{\mathscr{L}}^{\rm an},\|\ \|_{\tau^{\rm an}})\big)^{\wedge^{k-1}}$, $k=1,...,n$.
Pour $1\leq k \leq n$, on peut donc d\'efinir sur $\widehat {X}^{\rm an}$ le courant $[{\rm div}(\sigma^{\rm an})] 
\wedge \big(-c_1(\widehat{\mathscr{L}}^{\rm an},\|\ \|_{\tau^{\rm an}})\big)^{\wedge^{k-1}}$.

\noindent
En transposant la notion de courant de Segre $M^s$ introduite dans \cite[section 4]{ASWY14}, on
aboutit \`a la d\'efinition suivante~:

\begin{definition} Le courant de Segre attach\'e \`a la section $s$ est le courant
$$
M^s := [1 - \|s^{\rm an}\|^{\lambda}_{ E_X^{\rm an}}]_{\lambda = 0}
+ \pi_* \Big( \sum\limits_{k=1}^{n} [{\rm div}(\sigma^{\rm an})] \wedge
\big(-c_1(\widehat{\mathscr{L}}^{\rm an},\|\ \|_{\tau^{\rm an}})\big)^{\wedge^{k-1}}\Big).
$$
\end{definition}

\noindent
Nous avons la proposition suivante~:

\begin{prop}\label{propsegre} Le courant de Segre $M^s$ s'exprime aussi comme
$M^s = \sum\limits_{k=0}^n M^s_k$, o\`u
\begin{equation}\label{propsegre1}
\begin{split}
& M^s_0 = \lim\limits_{\lambda_0 \rightarrow 0} \big[1 - \|s^{\rm an}\|^{\lambda_0}_{ E_X^{\rm an}}\big]~; \\
& M^s_k = \lim\limits_{\lambda_k \rightarrow 0}
\Big(\lim\limits_{\lambda_{k-1} \rightarrow 0} \Big(
\cdots \Big(\lim\limits_{\lambda_1 \rightarrow 0} \\
& \Big(d''[\|s^{\rm an}\|^{\lambda_k}_{ E_X^{\rm an}}] \wedge d'[\log \|s^{\rm an}\|_{E_X^{\rm an}}] 
\wedge \bigwedge\limits_{\ell =1}^{k-1}
d'd'' \Big(\Big[\frac{\|s^{\rm an}\|_{E_X^{\rm an}}^{\lambda_\ell}}{\lambda_\ell}\Big]\Big)\Big)\Big) \cdots \Big) \Big).
\end{split}
\end{equation}
\end{prop}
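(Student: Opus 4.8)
Le terme $M^s_0$ se traite imm\'ediatement~: par d\'efinition $M^s_0=\lim_{\lambda_0\to 0}[1-\|s^{\rm an}\|^{\lambda_0}_{E_X^{\rm an}}]$ co\"incide avec le premier terme $[1-\|s^{\rm an}\|^\lambda]_{\lambda=0}$ intervenant dans la d\'efinition de $M^s$. Il reste donc \`a \'etablir, pour chaque $k$ entre $1$ et $n$, l'\'egalit\'e
$$
M^s_k=\pi_*\Big([{\rm div}(\sigma^{\rm an})]\wedge\big(-c_1(\widehat{\mathscr{L}}^{\rm an},\|\ \|_{\tau^{\rm an}})\big)^{\wedge^{k-1}}\Big),
$$
la somme sur $k$ redonnant alors exactement le second terme de $M^s$.

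La strat\'egie est de relever tous les calculs \`a l'\'eclatement normalis\'e $\pi~:\widehat X^{\rm an}\to X^{\rm an}$, o\`u la situation se lin\'earise. En effet, $\log\|s^{\rm an}\|_{E_X^{\rm an}}$ est le potentiel d'une section d'un fibr\'e de rang $m+1$ et son $d'd''$ est de nature trop singuli\`ere pour \^etre trait\'e directement en bas~; mais par construction de la m\'etrique $\|\ \|_\tau$ on a $\pi^*\|s^{\rm an}\|_{E_X^{\rm an}}=\|\sigma^{\rm an}\|_{\tau^{\rm an}}$, de sorte que $w:=\log\|\sigma^{\rm an}\|_{\tau^{\rm an}}=\pi^*\log\|s^{\rm an}\|_{E_X^{\rm an}}$ devient le potentiel d'une \emph{v\'eritable} section $\sigma^{\rm an}$ d'un fibr\'e en droites. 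La formule de Lelong-Poincar\'e \cite[th\'eor\`eme 4.6.5]{ChLD} donne alors $d'd''[w]=[{\rm div}(\sigma^{\rm an})]-c_1(\widehat{\mathscr{L}}^{\rm an},\|\ \|_{\tau^{\rm an}})$, et \eqref{poinclel2} fournit la version r\'egularis\'ee $\lim_{\lambda\to 0}d'd''[\|\sigma^{\rm an}\|_{\tau^{\rm an}}^\lambda/\lambda]=[{\rm div}(\sigma^{\rm an})]-c_1(\widehat{\mathscr{L}}^{\rm an},\|\ \|_{\tau^{\rm an}})$. Les produits de courants figurant dans $M^s_k$ s'interpr\`etent naturellement, apr\`es relev\'e par $\pi^*$, comme des produits sur $\widehat X^{\rm an}$ que l'on pousse ensuite par $\pi_*$~: comme $\pi$ est propre et birationnel, $\pi_*$ est continu pour la convergence faible et commute aux limites, et il suffit donc de mener le calcul en haut.

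Le c\oe ur de la preuve est une identit\'e courantielle pour le \og facteur de t\^ete\fg. En posant $c_1:=c_1(\widehat{\mathscr{L}}^{\rm an},\|\ \|_{\tau^{\rm an}})$ et $D:=[{\rm div}(\sigma^{\rm an})]$, on d\'eveloppe $d'd''[\|\sigma^{\rm an}\|_{\tau^{\rm an}}^{\lambda_k}/\lambda_k]$ pour obtenir
$$
d''[\|\sigma^{\rm an}\|_{\tau^{\rm an}}^{\lambda_k}]\wedge d'[w]=\|\sigma^{\rm an}\|_{\tau^{\rm an}}^{\lambda_k}\,d'd''[w]-d'd''\Big[\frac{\|\sigma^{\rm an}\|_{\tau^{\rm an}}^{\lambda_k}}{\lambda_k}\Big].
$$
L'observation d\'ecisive est que la fonction continue $\|\sigma^{\rm an}\|_{\tau^{\rm an}}^{\lambda_k}$ s'annule sur le support de $D$, d'o\`u $\|\sigma^{\rm an}\|^{\lambda_k}_{\tau^{\rm an}}\,D=0$ et donc $\|\sigma^{\rm an}\|_{\tau^{\rm an}}^{\lambda_k}\,d'd''[w]\to -c_1$ lorsque $\lambda_k\to 0$. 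Ainsi le facteur de t\^ete fait appara\^itre, \`a la limite, un unique courant port\'e par le diviseur exceptionnel $D_s$, tandis que chacun des $k-1$ facteurs restants $d'd''[\|\sigma^{\rm an}\|_{\tau^{\rm an}}^{\lambda_\ell}/\lambda_\ell]$ ne contribue, une fois la masse localis\'ee sur $D_s$, que par sa partie lisse $-c_1$~: toutes les \og auto-intersections\fg\ du diviseur sont tu\'ees par le m\^eme m\'ecanisme $\|\sigma^{\rm an}\|^\lambda_{\tau^{\rm an}}\,D=0$, et les puissances ext\'erieures $(-c_1)^{\wedge^{k-1}}$ ont un sens car $-c_1$ est psh-approchable. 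En prenant les limites les unes apr\`es les autres (dans l'ordre $\lambda_1,\dots,\lambda_k$) comme dans le lemme \ref{lemmeintersection}, puis en appliquant $\pi_*$, on obtient l'\'egalit\'e annonc\'ee.

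Le principal obstacle sera de justifier compl\`etement la bonne d\'efinition et la convergence de ces produits de courants tous port\'es par le \emph{m\^eme} diviseur $D_s$~: le th\'eor\`eme \ref{theorempfonctions2} ne s'applique pas ici, puisque l'intersection des supports est de codimension $1$, et non $\geq k$. C'est pr\'ecis\'ement la structure dissym\'etrique de la formule (un seul facteur porteur du diviseur, les autres se r\'eduisant \`a la forme $-c_1$) qui permet de contourner cette difficult\'e, via la r\'egularisation $\|\sigma^{\rm an}\|^\lambda_{\tau^{\rm an}}\,D=0$ et l'approchabilit\'e psh de $-c_1$, exactement comme dans \cite[section 4]{ASWY14}. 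Il restera enfin \`a contr\^oler l'\'echange entre les limites successives en $\lambda_1,\dots,\lambda_k$ et l'image directe $\pi_*$, ce qui r\'esulte de la propret\'e de $\pi$ et du th\'eor\`eme de convergence domin\'ee, appliqu\'es comme dans les preuves des th\'eor\`emes \ref{theorempfonctions} et \ref{theorempfonctions2}.
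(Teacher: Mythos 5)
Your proposal follows essentially the same route as the paper: lift everything to the normalized blow-up via $\pi^*s^{\rm an}=\sigma^{\rm an}\otimes\tau^{\rm an}$, use the Lelong--Poincar\'e equation $d'd''[\log\|\sigma^{\rm an}\|_{\tau^{\rm an}}]=[{\rm div}(\sigma^{\rm an})]-c_1(\widehat{\mathscr{L}}^{\rm an},\|\ \|_{\tau^{\rm an}})$ together with the vanishing of $\|\sigma^{\rm an}\|^{\lambda}_{\tau^{\rm an}}$ on the exceptional divisor to identify $\pi^*(M_k^{s,\lambda})$ with $[{\rm div}(\sigma^{\rm an})]\wedge\big(-c_1(\widehat{\mathscr{L}}^{\rm an},\|\ \|_{\tau^{\rm an}})\big)^{\wedge^{k-1}}$, regularize $-c_1$ by smooth forms using psh-approachability, invoke the lemma on $\tilde T^s_\lambda$, and push forward by $\pi_*$. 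This matches the paper's argument (your explicit ``head factor'' identity is merely a more detailed version of what the paper leaves implicit), so the proposal is correct and not a genuinely different proof.
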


\begin{proof}
La preuve est directement inspir\'ee de celle qui est conduite dans le cadre complexe dans
\cite[section 4]{ASWY14}. Puisqu'on a localement l'\'egalit\'e (au sens des courants) 
$$
[\log \|\pi^*[s^{\rm an}]\|_{\pi^*(E^{\rm n}_X)}]
= [\log |\sigma^{\{\rm an\}}|] + [\log \|\tau^{\rm an}\|] =
[\log \|\sigma^{\rm an}\|_{\|\tau\|^{\rm an}}],
$$
o\`u nous avons not\'e $\sigma^{\{\rm an\}}$ la fonction coordonn\'ee de 
$\sigma^{\rm an}$ dans un rep\`ere local, 
il d\'ecoule de la formule de Lelong-Poincar\'e que
$$
d'd'' \big[\log \|\pi^*[s^{\rm an}]\|_{\pi^*(E^{\rm an}_X)}\big] =
[{\rm div}(\sigma^{\rm an})] - c_1(\widehat {\mathscr{L}}^{\rm an},\|\ \|_{\tau^{\rm an}}).
$$
au sens des courants. Notons $M^{s,\lambda}_k$ ($k=0,...,n$) la composante de bidegr\'e
$(0,k)$ dans le courant dont on prend la limite lorsque les $\lambda_j$ tendent (les uns apr\`es les autres) vers $0$ au second membre de \eqref{propsegre1}.
On a pour $\lambda_0 >0$
$$
\pi^* (M_0^{s,\lambda})= 1 - [\|\pi^*[s^{\rm an}]\|^{\lambda_0}_{E_{X^{\rm an}}^{\rm an}}]
$$
et, pour $\lambda_k >0$ ($k=1,...,n$)~:
$$
\pi^* (M_k^{s,\lambda})
=[{\rm div}(\sigma^{\rm an})] \wedge
\big(-c_1(\widehat{\mathscr{L}}^{\rm an},\|\ \|_{\tau^{\rm an}})\big)^{\wedge^{k-1}}\,.
$$
Si l'on remplace le $(1,1)$-courant $-c_1(\widehat{\mathscr{L}}^{\rm an},\|\ \|_{\tau^{\rm an}})$ par une $(1,1)$-forme lisse $\hat\omega$ qui l'approche au sens 
des courants (on a observ\'e que cela \'etait possible puisque la fonction $-d'd'' \log \|\tau^{\rm an}\|_{\pi^*( E^{\rm an}_X)}$ est une fonction globalement 
psh-approchable au voisinage de tout point $\hat x$ o\`u $\sigma$ n'est pas inversible), il r\'esulte du lemme \ref{lemmeintersection} que l'on a pour tout $1\leq k \leq n$,
$$
\lim\limits_{\lambda_k \rightarrow 0_+} \dots
\lim\limits_{\lambda_1 \rightarrow 0_+}
M_k^{s,\underline\lambda} = \pi_* \Big(\Big(\dots \pi^* (M^{s,\lambda}_k)_{\lambda_1=0}\dots \Big)_{\lambda_k =0}\Big) =
\pi_* \Big([{\rm div}(\sigma^{\rm an})] \wedge \hat\omega^{\wedge^{k-1}}\Big).
$$
On d\'eduit le r\'esultat de la Proposition \ref{propsegre} en approchant
au sens des courants (au fur et \`a mesure que les $\lambda_k$ tendent successivement vers $0$) la forme
$-c_1(\widehat{\mathscr{L}}^{\rm an},\|\ \|_{\tau^{\rm an}})$ par une $(1,1)$-forme lisse
$\hat \omega$.
\end{proof}

\section{Nombres ou cycles de Lelong dans le contexte non archim\'edien}\label{sectlelong}

Soit $\mathscr X$ un espace analytique complexe de dimension $n$ et
$T$ un $(k,k)$-courant positif sur $\mathscr X$. Soit $x_0\in \mathscr X$.
Le nombre de Lelong (ordinaire) $\nu(T,x_0)$ du courant $T$ au point $x_0$ est d\'efini comme la limite lorsque $\epsilon$ tend vers $0^+$ de la fonction croissante 
sur $]0,\epsilon_0]$ (avec
$0<\epsilon_0<<1$)~:
$$
\epsilon \mapsto \frac{1}{\epsilon^{2(n-k)}}\int_{\|x-x_0\|<\epsilon} T \wedge (dd^c \|x-x_0\|^2)^{\wedge^{n-k}}.
$$
Appelons cycle g\'en\'eralis\'e de $\mathscr X$ tout courant de la forme $\pi_* (c)$, o\`u
$\pi~: \mathscr Y \rightarrow \mathscr X$ est un morphisme propre entre espaces analytiques complexes et $c$ est un produit de composantes de formes de Chern 
lisses sur $\mathscr Y$, chacune attach\'ee \`a un  fibr\'e holomorphe $(F\rightarrow \mathscr Y,\|\ \|)$ \'equip\'e d'une m\'etrique lisse~; tel est le cas par 
exemple des courants 
$$
\pi_*\big([Y_\iota]\wedge (-c_1(\hat L, \|\ \|_\tau))^{\wedge^{k-1}}\big) = 
(\pi \circ i_\iota)_* \Big( - \big(c_1(\hat L_{|Y_\iota},(\|\ \|_\tau)_{|Y_\iota})\big)^{\wedge^{k-1}}
\Big)   
$$
($k=1,...,n$), o\`u $Y_\iota$ d\'esigne l'une des composantes 
irr\'eductibles du diviseur exceptionnel $[D]$ de l'\'eclatement $\pi~: \hat{\mathscr X} \rightarrow \mathscr X$ le long du faisceau d'id\'eaux de 
$\mathcal O_{\mathscr X}$ attach\'e \`a une section $s$ d'un fibr\'e hermitien $E_{\mathscr X} \rightarrow \mathscr X$ et $i_\iota~: Y_\iota \rightarrow \hat
{\mathscr{X}}$ l'immersion de $Y_\iota$ dans 
$\hat{\mathscr{X}}$~; la m\'etrique $\|\ \|_\tau$ sur 
le fibr\'e en droites $\hat L=\mathcal O(-[D])$
est ici d\'efinie par $\|\sigma\|_\tau = \|\pi^* s\|_{\pi^* (E_{\mathscr X})}$.
\'Etant donn\'e un point $x_0$ de $\mathscr X$ et un cycle g\'en\'eralis\'e $T$ sur
$\mathscr X$, on sait associer \`a $T$ un nombre de Lelong $\nu(T,x_0)\in \Z$ au point $x_0$.
Par exemple, le nombre
de Lelong $\nu(T_\iota,x_0)$ du courant $T_\iota = \pi_*\big([Y_\iota]\wedge \big(-c_1(\hat L, \|\ \|_\tau)\big)^{\wedge^{k-1}}\big)$ au point $x_0$
s'exprime ainsi lorsque $\xi_{x_0} = \xi_{x_0,0},...,\xi_{x_0,m_{x_0}}$ d\'esigne un syst\`eme de g\'en\'erateurs de l'id\'eal maximal $\EuFrak M_{x_0}$ de 
$\mathscr O_{\mathscr X,x_0}$~:
\begin{equation}\label{nblelongcmplx}
\begin{split}
& \Big[ \cdots \Big[ \int\limits_{\big(\P^{m_{x_0}}_\C\big)^{\nu}}  \Big(\bigwedge\limits_{j=1}^{\nu}  
\big(c_1(\mathcal O_{\P^{m_{x_0}}_\C}(1),\|\ \|_{\rm fs })\big)^{\wedge^{m_{x_0}}}(\kappa_j)\Big) \wedge \\
& \bigwedge\limits_{j=1}^{\nu}
\Big(1-|\langle \kappa_j,\xi_{x_0}\rangle|_{\rm fs}^{2\lambda_j}
+ dd^c \Big( \frac{|\langle \kappa_j,\xi_{x_0}\rangle|^{2\lambda_j}_{\rm fs}}{\lambda_j}\Big)\Big)
\wedge T_\iota \Big]
_{\lambda_1 =0}\cdots \Big]_{\lambda_{\nu}=0} = \nu(T_\iota,x_0)\, [\{x_0\}]
\end{split}
\end{equation}
o\`u $\nu= \min (n+1,m_{x_0}+1)$ et $|\langle \kappa,\xi\rangle|_{\rm fs} := |\langle \kappa,\xi\rangle|/\|\kappa\|$
si $\kappa = [\kappa_0:\dots : \kappa_{m_{x_0}}]$ et $\|\kappa\|$ d\'esigne la norme euclidienne
dans $\C^{m_{x_0}+1}$ (voir la Proposition 5.3 de \cite{ASWY14})~; la notation
$\big[\dots \big]_{\lambda_j=0}$ signifie ici que l'on prolonge m\'eromorphiquement la fonction
holomorphe (\`a valeurs courants) de $\lambda_j$ (pour ${\rm Re}\, \lambda_j >>1$) enserr\'ee par les crochets et que l'on
\'evalue ensuite le coefficient de $\lambda_j^0$ dans le d\'eveloppement en s\'erie de Laurent de ce prolongement m\'eromorphe au voisinage de l'origine.
\vskip 1mm
\noindent
Soit maintenant $X$ une vari\'et\'e alg\'ebrique projective de dimension $n$ d\'efinie sur un corps valu\'e $\K$ et $X^{\rm an}$ 
son analytification au sens de Berkovich. Consid\'erons un courant $T$ sur $X^{\rm an}$ de la forme 
$T = \sum_{\iota,\iota'} (\pi_\iota) _*[\omega_{\iota'}]$, o\`u $\pi_\iota~: Y_\iota^{\rm an} \rightarrow X^{\rm an}$ 
est un morphisme analytique entre analytifi\'es au sens de Berkovich
de vari\'et\'es alg\'ebriques projectives d\'efinies sur $\K$ et $\omega_{\iota'}$ est un produit de premi\`eres formes de Chern de 
fibr\'es en droites $(\mathscr{L}_{\iota,\iota'}^{\rm an},\|\ \|_{\iota,\iota'}^{\rm an})$, o\`u
$\|\ \|_{\iota,\iota'}^{\rm an}$ est une m\'etrique formelle PL globalement psh approchable sur le fibr\'e 
$\mathscr{L}^{\rm an}_{\iota,\iota'} \rightarrow  Y_\iota^{\rm an}$.
Si $x_0 $ est un point ferm\'e de  $X$, on peut analytifier le morphisme $\iota_{x_0}~: \{x_0\} \rightarrow X$ et
consid\'erer $\{x_0\}^{\rm an}$ comme un sous-ensemble de Zariski de dimension $0$ de
$X^{\rm an}$. Soit $\xi_{x_0} = (\xi_{x_0,0},...,\xi_{x_0,m_{x_0}})$ un
syst\`eme de g\'en\'erateurs de l'id\'eal maximal $\EuFrak M_{x_0}$ de $\mathscr O_{X,x_0}$ et
$\nu = \min(n+1,m_{x_0}+1)$. On consid\`ere le fibr\'e $\big(\mathcal O_{\P^{m_{x_0}}_\K}(1)\big)^{\rm an}
\otimes \K^{\rm an}$ sur $\big(\P^{m_{x_0}}_\K \times \mathscr U\big)^{\rm an}$ ($\mathscr U$ ouvert affine contenant $x_0$)
et on analytifie la section $(\kappa,x)\mapsto \langle \kappa,\xi_{x_0}(x)\rangle$ en une
section du fibr\'e $\big(\mathcal O_{\P^{m_{x_0}}_\K}(1)\big)^{\rm an}
\otimes \K^{\rm an}$ au-dessus de $\big(\P^{m_{x_0}}_\K\times \mathscr U\big)^{\rm an} =
\big(\P^{m_{x_0}}_\K\big)^{\rm an} \times U$. On choisit une m\'etrique semi-positive sur
$\mathcal O_{\P^{m_{x_0}}_\K}(1)$ induisant une m\'etrique PL sur
$\big(\mathcal O_{\P^{m_{x_0}}_\K}(1)\big)^{\rm an}$ que l'on note $\|\ \|_{\rm moy}$ et pour laquelle la mesure de Monge-Amp\`ere 
$\big(c_1(\mathcal O_{\P^{m_{x_0}}_\K}(1),\|\ \|_{\rm moy})\big)^{\wedge^{m_{x_0}}}$ est une mesure atomique 
(par exemple la mesure de Dirac au point de Gau\ss \ lorsque $\|\ \|_{\rm moy}$ est la m\'etrique induite par le choix de 
la m\'etrique standard sur $\mathcal O_{\P^{m_{x_0}}_\K}(1)$). On d\'efinit ainsi un courant sur $X^{\rm an}$ (en s'inspirant
de l'approche \eqref{nblelongcmplx}) de support le sous-ensemble de Zariski $\{x_0\}^{\rm an}$~:
\begin{equation}
\begin{split}
& \lim\limits_{\lambda_\nu \rightarrow 0}
\Big(\lim\limits_{\lambda_{\nu-1} \rightarrow 0} \Big(
\cdots \Big(\lim\limits_{\lambda_1 \rightarrow 0} \, \int\limits_{\big((\P^{m_{x_0}}_\K)^{\rm an}\big)^{\nu} } \Big(\bigwedge\limits_{j=1}^\nu  
\big(c_1(\mathcal O_{\P^{m_{x_0}}_\K}(1),\|\ \|_{\rm moy})\big)^{\wedge^{m_{x_0}}}(\kappa_j)\Big) \wedge \\
& \bigwedge\limits_{j=1}^\nu
\Big([1 - \|\langle \kappa_j,\xi_{x_0}^{\rm an}\rangle\|_{{\rm moy}}^{\lambda_j}]
+ d'd''\Big(\Big[ \frac{\|\langle \kappa_j,\xi_{x_0}^{\rm an}\rangle\|_{{\rm moy}}^{\lambda_j}}{\lambda_j}\Big]\Big)\Big)\wedge T(x)\Big)
\cdots \Big)\Big).
\end{split}
\end{equation}
Lorsque l'on choisit comme m\'etrique la m\'etrique standard sur $\mathcal O_{\P^{m_{x_0}}_\K}(1)$, le courant ainsi construit est 
ind\'ependant du choix du syst\`eme g\'en\'erateur $\xi_{x_0}$ de l'id\'eal maximal~: si l'on dispose de deux syst\`emes de g\'en\'erateurs
$\xi_{x_0}$ et $\tilde \xi_{x_0}$ pour l'id\'eal maximal $\EuFrak M_{x_0}$, on peut les compl\'eter par des fonctions nulles pour en faire 
deux syst\`emes de g\'en\'erateurs de la m\^eme longueur $m_{x_0} + \tilde m_{x_0}$ et on compare les deux courants construits en
utilisant la m\'etrique PL induite par la m\'etrique standard sur $\mathcal O_{\P^{m_{x_0}
+ \tilde m_{x_0} -1} _\K}(1)$.
Le courant ainsi construit correspond \`a un cycle analytique de dimension pure $0$, de support $\{x_0\}^{\rm an}$ que l'on peut appeler 
cycle de Lelong du courant $T$ sur le $\K$-espace analytique $\{x_0\}^{\rm an}$.

\section{La formule de King dans le contexte non archim\'edien}\label{sectionKing}

Soit (comme dans la section \ref{sectionsegre}) $X$ une vari\'et\'e alg\'ebrique projective de dimension $n$ d\'efinie sur un corps valu\'e $\K$ et
$X^{\rm an}$ son analytification au sens de Berkovich. On consid\`ere un fibr\'e
alg\'ebrique $E_X \rightarrow X$ de rang fini au-dessus de $X$ et on \'equipe son analytifi\'e $ E_X^{\rm an} \rightarrow X^{\rm an}$  
d'une m\'etrique formelle PL \cite[d\'efinition 6.2.9]{ChLD}, que l'on supposera ici globalement psh approchable 
not\'ee $\|\ \|_{E_X^{\rm an}}$
au-dessus de l'analytification $X^{\rm an}$. Soit $s\in \mathcal O_X(E_X)$ une section globale de
$E_X$ dont on notera $s^{\rm an}~: X^{\rm an} \rightarrow E_X^{\rm an}$ l'analytification.
\vskip 1mm
\noindent
Soit $\pi~: \widehat X \longmapsto X$ l'\'eclatement normalis\'e de $X$
via le faisceau coh\'erent d'id\'eaux attach\'e \`a la section globale
$s\in \mathcal O_X(E_X)$ et $\pi^{\rm an}~:\widehat{X}^{\rm an}
\rightarrow X^{\rm an}$ son analytification.
\vskip 1mm
\noindent
Pour chaque $k=0,...,n$, on note $(Y_{k,\iota_k})_{\iota_k}$ la liste des composantes exceptionnelles de l'\'eclatement normalis\'e $\pi~: \widehat X \longmapsto X$
telles que
${\rm codim}_X\, \pi(Y_{k,\iota_k}) = k$ et $( Y_{k,\iota_k}^{\rm an}\hookrightarrow \widehat{X}^{\rm an})_{\iota_k}$ 
la liste de leurs analytifications au sens de Berkovich. On introduit \'egalement l'analytifi\'e $\widehat{\mathscr{L}}^{\rm an}$ 
induit au-dessus de $\widehat{X}^{\rm an}$ par le fibr\'e $L_{\widehat X}$ correspondant au diviseur exceptionnel de l'\'eclatement $\pi$. Ce fibr\'e
$\widehat {\mathscr{L}}^{\rm an}$ est \'equip\'e de la m\'etrique $\|\ \|_{\tau^{\rm an}}$ induite par la m\'etrique
d\'efinie par $\|\sigma\| = \|\pi^*(s)\|_{\pi^*(E_X)}$ si $s = \sigma \otimes \tau$, o\`u
$\sigma$ est une section de $L_{\widehat X}$ et $\tau$ une section ne s'annulant pas de
$L_{\widehat X}^{-1} \otimes \pi^*(E)$.
\\
Pour chaque paire d'entiers $k,\ell\in \{1,...,n\}$, pour chaque indice $\iota_\ell$, on introduit le courant
$T_{k,\ell,\iota_\ell} := \pi_*^{\rm an} \Big([Y_{\ell,\iota_\ell}^{\rm an}] \wedge \big(-c_1(\widehat{\mathscr{L}}^{\rm an},\|\ \|_{\tau^{\rm an}})\big)^{\wedge^{k-1}}\Big)$.
Le support de ce courant est inclus dans l'union des ensembles de Zariski
$\pi^{\rm an}(Y_{\ell,\iota_\ell}^{\rm an})$, sous-ensemble analytique ferm\'e de $X^{\rm an}$ de codimension $\ell$.
\vskip 1mm
\noindent
Lorsque $\ell >k$ et que $\omega \in \mathscr{A}_c^{n-k,n-k}(X^{\rm an})$, on a $(j_{\iota_\ell}^{\rm an})^* \omega =0$ si
$$
j_{\iota_\ell}^{\rm an}~: Y_{\ell,\iota_\ell}^{\rm an} \rightarrow X^{\rm an}
$$
d\'esigne l'analytification du morphisme
$$
Y_{\ell,\iota_\ell} \hookrightarrow \widehat X \stackrel{\pi}{\longrightarrow} X
$$
(pour des raisons de dimension, du fait
que ${\rm codim}_X (\pi(Y_{\ell,\iota_\ell}))=\ell >k$). Il en r\'esulte donc que, d\`es que $\ell >k$, on a $T_{k,\ell,\iota_\ell}=0$ pour tout indice $\iota_\ell$.
\vskip 1mm
\noindent
On remarque aussi que si $\ell < k$, le cycle de Lelong du courant $T_{k,\ell,\iota_\ell}$ en $\{x_0\}^{\rm an}$ dans  $X^{\rm an}$ est le cycle nul (pour tout $x_0\in X$). 
On raisonne pour cela ainsi, apr\`es avoir dans un premier temps approch\'e le $(1,1)$-courant
$-c_1(\widehat{\mathscr{L}}^{\rm an},\|\ \|_{\tau^{\rm an}})$ par une suite
de $(1,1)$-formes de Chern lisses en utilisant le fait que la m\'etrique PL en jeu ici est suppos\'ee globalement psh approchable.
\begin{itemize}
\item On multiplie le courant $T_{k,\ell,\iota_\ell}$
par le \og courant moyen\fg\ (on rappelle que le courant 
$\big(c_1(\mathcal O_{\P^{m_{x_0}}_\K}(1),\|\ \|_{\rm moy})\big)^{\wedge^{m_{x_0}}}(\kappa_1)$ correspond \`a une mesure atomique)
$$
\int\limits_{\big(\P^{m_{x_0}}_\K\big)^{\rm an}}
\big(c_1(\mathcal O_{\P^{m_{x_0}}_\K}(1),\|\ \|_{\rm moy})\big)^{\wedge^{m_{x_0}}}(\kappa_1)
\wedge \Big(
[1 - \|\langle \kappa_1,\xi_{x_0}^{\rm an}\rangle\|_{{\rm moy}}^{\lambda_1}]
+ d'd''\Big( 
\Big[\frac{\|\langle \kappa_1,\xi_{x_0}^{\rm an}\rangle\|_{{\rm moy}}^{\lambda_1}}{\lambda_1}\Big]\Big)\Big).
$$
En utilisant le fait que le support de toute forme $\varphi\in \mathscr{A}_c^{p,n-1}( Y_{\ell,\iota_\ell}^{\rm an})$ ($0\leq \ell\leq n-1$) 
ne saurait intersecter aucun sous-ensemble de Zariski propre de $ Y_{\ell,\iota_\ell}^{\rm an}$
(on applique \`a nouveau \cite{ChL}, 5.1), on voit
que soit le courant obtenu ainsi est nul, soit l'analytifi\'e de
$\P^{m_{x_0}}_\K \times \pi(Y_{\iota_\ell,\ell})$ dans
$\P^{m_{x_0}}_\K \times\mathscr U$ (on reprend ici les notations utilis\'ees
dans la section \ref{sectlelong})
est inclus dans $\{\langle \kappa_1,\xi_{x_0}^{\rm an}\rangle =0\}$ pour un $\kappa_1$ g\'en\'erique 
(la moyennisation effectu\'ee ici correspond 
\`a la prise de mesure de Dirac au point de Gauss).
\item On r\'eit\`ere si n\'ecessaire (lorsque $\ell<k-1$) cette op\'eration
$k-\ell -1$ fois. Cette op\'eration ne saurait se poursuivre sans que l'on ne rencontre lors du processus le courant nul.
\end{itemize}
\vskip 1mm
\noindent
Ainsi l'on peut \'ecrire, pour tout
$k\in [{\rm codim}_X s^{-1}(0),n]$,
\begin{equation}\label{king1}
\begin{split}
& \pi_*^{\rm an} \Big(
[{\rm div}(\sigma^{\rm an})]
\wedge  \big(-c_1(\widehat{\mathscr{L}}^{\rm an},\|\ \|_{\tau^{\rm an}})\big)^{\wedge^{k-1}}\Big)
= \\
& = \sum_{\iota_k}
\pi_*^{\rm an} \Big(
[Y^{\rm an}_{k,\iota_k}]
\wedge  \big(-c_1(\widehat{\mathscr{L}}^{\rm an},\|\ \|_{\tau^{\rm an}})\big)^{\wedge^{k-1}}\Big) + \mathscr N_k[s]\,,
\end{split}
\end{equation}
de mani\`ere \`a ce que le sous-ensemble des points $\{x_0\}^{\rm an}$ de $X^{\rm an}$ o\`u le $(k,k)$-courant $\mathscr N_k[s]$ a 
un cycle de Lelong non nul soit de codimension au moins \'egale \`a
$k+1$.
\vskip 1mm
\noindent
On peut donc \'enoncer la version suivante du Th\'eor\`eme de King, dans le cadre cette fois non archim\'edien. 
Ce r\'esultat constitue le pendant du Th\'eor\`eme 1.1 de \cite{ASWY14}.
Nous ne donnerons l'\'enonc\'e ici que dans le contexte alg\'ebrique, contexte o\`u nous nous pla\c cons dans cet article. 
La terminologie \og stable\fg\ et \og mobile\fg\ fait ici r\'ef\'erence \`a celle classiquement introduite dans le cadre de la th\'eorie de 
l'intersection impropre en g\'eom\'etrie analytique complexe, voir par exemple l'introduction de \cite{ASWY14} ainsi que \cite{GafGas} 
o\`u cette terminologie est introduite. 

\begin{theorem}
Soit $X$ une vari\'et\'e alg\'ebrique projective de dimension $n$ d\'efinie sur un corps valu\'e $\K$ et
$X^{\rm an}$ son analytification au sens de Berkovich. On consid\`ere un fibr\'e
alg\'ebrique $E_X \rightarrow X$ de rang fini au-dessus de $X$, l'on suppose que le fibr\'e $E_X^{\rm an} \rightarrow X^{\rm an}$  
est \'equip\'e d'une m\'etrique
formelle PL, not\'ee $\|\ \|_{E_X^{\rm an}}$, au-dessus de l'analytification
$X^{\rm an}$. Soit $s\in \mathcal O_X(E_X)$ une section globale de
$E_X$ et $s^{\rm an}\in \mathcal O_{X^{\rm an}} (E_X^{\rm an})$ son analytification. 
Pour tout $k=0,...,n$, on note $(Y_{k,\iota_k})_{\iota_k}$ la liste des composantes exceptionnelles de 
l'\'eclatement normalis\'e $\pi~: \widehat X \longmapsto X$ (le long du faisceau coh\'erent d'id\'eaux 
attach\'e \`a la section $s$) telles que
${\rm codim}_X\,  \pi(Y_{k,\iota_k}) = k$ et $(Y_{k,\iota_k}^{\rm an}\hookrightarrow \widehat{X}^{\rm an})_{\iota_k}$ 
la liste de leurs analytifications au sens de Berkovich. La composante de bidegr\'e $(k,k)$ du courant $M^s$ de Segre
se scinde, pour $k=1,...,n$ en sa composante \og stable\fg~:
$$
(M^s_k)_{\rm stable} =  \sum_{\iota_k}
\pi_*^{\rm an} \Big(
[Y^{\rm an}_{k,\iota_k}]
\wedge  \big(-c_1(\widehat{\mathscr{L}}^{\rm an},\|\ \|_{\tau^{\rm an}})\big)^{\wedge^{k-1}}\Big)
$$
et sa composante \og mobile\fg~:
$$
(M^s_k)_{\rm mobile} =  \sum_{\ell =0}^{k-1}\sum\limits_{\iota_\ell}
\pi_*^{\rm an} \Big(
[Y^{\rm an}_{\ell,\iota_\ell}]
\wedge  \big(-c_1(\widehat{\mathscr{L}}^{\rm an},\|\ \|_{\tau^{\rm an}})\big)^{\wedge^{k-1}}\Big)
$$
telle que, pour tout point ferm\'e $x\in X$, le cycle de Lelong du courant
$(M^s_k)_{\rm mobile}$ sur $\{x\}^{\rm an}$ soit nul.
\end{theorem}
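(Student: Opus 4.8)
The plan is to read the splitting off the structure of the Segre current furnished by Proposition~\ref{propsegre}, and then to establish the Lelong-cycle assertion by assembling the two vanishing statements obtained in the itemized discussion that precedes the theorem. First I would recall that, for $1\le k\le n$, the bidegree-$(k,k)$ component of $M^s$ is
$$
M^s_k=\pi^{\rm an}_*\Big([{\rm div}(\sigma^{\rm an})]\wedge\big(-c_1(\widehat{\mathscr L}^{\rm an},\|\ \|_{\tau^{\rm an}})\big)^{\wedge^{k-1}}\Big).
$$
Here $[{\rm div}(\sigma^{\rm an})]$ is the exceptional divisor, and I would decompose it as the sum of the integration currents $[Y^{\rm an}_{\ell,\iota_\ell}]$ on its irreducible components (each weighted by its multiplicity), grouped according to the codimension $\ell={\rm codim}_X\,\pi(Y_{\ell,\iota_\ell})$ of their images in $X$. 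By linearity of $\pi^{\rm an}_*$ and of the wedge with the (globally psh-approximable) Chern current this gives
$$
M^s_k=\sum_{\ell}\sum_{\iota_\ell}T_{k,\ell,\iota_\ell},\qquad
T_{k,\ell,\iota_\ell}=\pi^{\rm an}_*\Big([Y^{\rm an}_{\ell,\iota_\ell}]\wedge\big(-c_1(\widehat{\mathscr L}^{\rm an},\|\ \|_{\tau^{\rm an}})\big)^{\wedge^{k-1}}\Big).
$$

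Next I would invoke the dimension count already carried out above: for $\ell>k$ and any $\omega\in\mathscr A_c^{n-k,n-k}(X^{\rm an})$ one has $(j^{\rm an}_{\iota_\ell})^*\omega=0$, whence $T_{k,\ell,\iota_\ell}=0$, so that only the indices $\ell\le k$ contribute to $M^s_k$. I then declare the terms with $\ell=k$ to be the stable component $(M^s_k)_{\rm stable}$ and the terms with $\ell<k$ to be the mobile component $(M^s_k)_{\rm mobile}$; since no exceptional component has $\ell=0$, this is exactly the splitting recorded in \eqref{king1}, with $\mathscr N_k[s]=(M^s_k)_{\rm mobile}$. What remains is the Lelong-cycle assertion: for every closed point $x\in X$, the Lelong cycle of $(M^s_k)_{\rm mobile}$ on $\{x\}^{\rm an}$ must be shown to vanish.

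For this last point I would use additivity of the Lelong-cycle construction over the finite family $\{T_{k,\ell,\iota_\ell}\}_{\ell<k}$ to reduce to showing that each single $T_{k,\ell,\iota_\ell}$ with $\ell<k$ has vanishing Lelong cycle at $\{x_0\}^{\rm an}$, which is precisely the iterated-averaging computation established just before the statement. Concretely, after approximating $-c_1(\widehat{\mathscr L}^{\rm an},\|\ \|_{\tau^{\rm an}})$ by smooth Chern forms (legitimate because the PL metric is globally psh-approximable), I would multiply $T_{k,\ell,\iota_\ell}$ by the atomic mean current $\big(c_1(\mathcal O_{\P^{m_{x_0}}_\K}(1),\|\ \|_{\rm moy})\big)^{\wedge^{m_{x_0}}}(\kappa_1)$ wedged with the corresponding Lelong-Poincar\'e factor, and exploit the fact that the support of any test form on $Y^{\rm an}_{\ell,\iota_\ell}$ cannot meet a proper Zariski closed subset (\cite{ChL}, 5.1); iterating this averaging $k-\ell-1$ further times necessarily meets the zero current before the process can terminate. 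The genuinely delicate ingredient is this vanishing for $\ell<k$, but it is carried out in full just above; here it suffices to organise the successive limits $\lambda_1\to0,\dots,\lambda_\nu\to0$ and to appeal at each stage to the linearity of the averaging and meromorphic-continuation operators on the finite sum, which makes the splitting and the Lelong-cycle conclusion immediate.
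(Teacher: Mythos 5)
Your argument is correct, but it follows a genuinely more direct route than the paper's. You obtain the splitting by combining the definition of the Segre current (equivalently the identification $\pi^{\rm an}_*\big([{\rm div}(\sigma^{\rm an})]\wedge(-c_1(\widehat{\mathscr L}^{\rm an},\|\ \|_{\tau^{\rm an}}))^{\wedge^{k-1}}\big)=M^s_k$ coming from Proposition \ref{propsegre}) with the decomposition of the exceptional divisor into the weighted components $[Y^{\rm an}_{\ell,\iota_\ell}]$, then kill the terms with $\ell>k$ by the dimension count and the Lelong cycles of the terms with $\ell<k$ by the iterated averaging argument; this is exactly the content of \eqref{king1} and of the two itemized observations preceding the theorem, so the statement follows by linearity. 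The paper's own proof does not retrace these steps: taking \eqref{king1} for granted, it passes to a local orthonormal frame for $E_X^{\rm an}$, factors the coordinate sections through $\sigma^{\rm an}$, and uses Theorem \ref{theorempfonctions2} to construct Green currents $A_{k,\iota_k}$ supported on $\pi^{\rm an}(Y^{\rm an}_{k,\iota_k})$ whose pushforwards vanish for dimension reasons; this expresses $(M^s_k)_{\rm stable}$ as an averaged Mellin limit and, for a closed point $x$, identifies the Lelong cycle of $M^s_k$ at $x^{\rm an}$ with a Vogel current in the sense of section \ref{sectionVogel}. Your version is the more economical one and fully suffices for the literal statement; what the paper's detour buys is precisely this supplementary Vogel-current interpretation of the stable part, which is the true analogue of the King formula of \cite{ASWY14}. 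Two small points you should make explicit: the reduction of the Lelong-cycle claim to the individual $T_{k,\ell,\iota_\ell}$ uses the linearity in $T$ of the construction of section \ref{sectlelong} over a finite sum (legitimate, but worth stating), and the multiplicities of the components of ${\rm div}(\sigma^{\rm an})$ must indeed be carried through the decomposition, as you note, even though the displayed formulas of the theorem suppress them.
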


\begin{proof}
Supposons que $E_X$ soit de rang $m+1$.
Soit $x^{\rm an}$ un point de $X^{\rm an}$ et 
$U_{x^{\rm an}}$ un domaine analytique contenant $x^{\rm an}$ au-dessus duquel $E^{\rm an}$ admette un rep\`ere orthonorm\'e 
$\{e_0,...,e_m\}$. La section $s^{\rm an}$ s'exprime dans $U_{x^{\rm an}}$ sous la forme
$$
s^{\rm an} = \sum\limits_{\ell=0}^m s_{\ell}^{\rm an}\, e_j,
$$
o\`u les fonctions coordonn\'ees $s_{\ell}^{\rm an}$, $\ell =0,...,m$, sont des fonctions analytiques
et o\`u
$$
\|s\| = \max\limits_{0\leq \ell\leq m} |s_\ell^{\rm an}|.
$$
Auquel cas, on peut consid\'erer, au lieu de la factorisation $(\pi^{\rm an})^*(s)=\sigma^{\rm an} \otimes \tau^{\rm an}$ 
(o\`u $\sigma^{\rm an}$ est une section du fibr\'e $\widehat{\mathscr{L}}^{\rm an}$), ind\'ependamment chaque factorisation 
$(\pi^{\rm an})^* (s_\ell^{\rm an}) = \sigma^{\rm an}
\otimes \tau^{\rm an}_\ell$, les $\tau^{\rm an}_\ell$ ($\ell=0,...,m$)
\'etant des sections au-dessus de $(\pi^{\rm an})^{-1}(U_{x^{\rm an}})$
du fibr\'e $(\widehat{\mathscr{L}}^{\rm an})^{-1}$. Reprenant la construction
des courants de Vogel telle qu'elle a \'et\'e d\'ecrite dans la section
\ref{sectionVogel}, on observe que, pour tout $k=1,...,n$, pour tout $\iota_k$, on peut
construire \`a l'aide du Th\'eor\`eme \ref{theorempfonctions2} un $(k-1,k-1)$-courant $A_k
\in \mathscr{D}_{n-(k-1),n-(k-1)}(\pi^{\rm an}(U_{x^{\rm an}}))$
de support inclus dans l'ensemble de Zariski $\pi^{\rm an}(Y_{k,\iota_k}^{\rm an})$ 
(de codimension $k$ dans $X^{\rm an}$, donc dans $U_{x^{\rm an}}$), solution de l'\'equation de Green \og moyennis\'ee\fg \
\begin{eqnarray*}
d'd''A_k &=& \lim\limits_{\lambda_{k-1} \rightarrow 0}
\Big(
\cdots \Big(\lim\limits_{\lambda_1 \rightarrow 0} \, \int\limits_{\big((\P^m_\K)^{\rm an}\big)^{k-1}} 
\Big(\bigwedge\limits_{j=1}^{k-1} \big(c_1(\mathcal O_{\P^m_\K}(1),\|\ \|_{\rm moy})\big)^{\wedge^m}(\kappa_j)\Big) \wedge \\
& & \bigwedge_{j=1}^{k-1} d'd''\Big(\Big[\frac{\|\langle \kappa_j,\tau^{\rm an}\rangle\|_{(\widehat
{\mathscr{L}}^{\rm an})^{-1}
,{\rm moy}}^{\lambda_j}}{\lambda_j}\Big]\Big) \wedge
[Y_{k,\iota_k}^{\rm an}]\\
& & \qquad \qquad  - [Y_{k,\iota_k}^{\rm an}]\wedge  \big(-c_1(\widehat{\mathscr{L}}^{\rm an},\|\ \|_{\tau^{\rm an}})\big)^{\wedge^{k-1}}\Big)\Big)
\end{eqnarray*}
Chaque courant $\pi^{\rm an}_* (A_{k,\iota_k})\in \mathscr D_{n-(k-1),n-(k-1)}(U_{x^{\rm an}})$ est de support inclus dans 
l'ensemble de Zariski $\pi^{\rm an}(Y_{k,\iota_k}^{\rm an})$~; un tel courant, de part sa construction m\^eme via le 
prolongement analytique, est donc nul pour des raisons de dimension et la composante stable  $(M^s_k)_{\rm stable}$ 
de la composante $M^s_k$ du courant de Segre $M^s$ s'exprime donc aussi
comme
\begin{eqnarray*}
& & (M^s_k)_{\rm stable} = \\
& & \pi^{\rm an}_*\Big(
\sum\limits_{\iota_k} \Big( \lim\limits_{\lambda_{k-1} \rightarrow 0}
\Big(
\cdots \Big(\lim\limits_{\lambda_1 \rightarrow 0} \,  \int\limits_{\big((\P^m_\K)^{\rm an}\big)^{k-1}} 
\Big(\bigwedge\limits_{j=1}^{k-1} \big(c_1(\mathcal O_{\P^m(\K)}(1),\|\ \|_{\rm moy})\big)^{\wedge^m}(\kappa_j)\Big) \wedge \\
& &
\bigwedge_{j=1}^{k-1} d'd''\Big( \Big[\frac{\|\langle \kappa_j,\tau^{\rm an}\rangle\|_{(\widehat
{\mathscr{L}}^{\rm an})^{-1}
,{\rm moy}}^{\lambda_j}}{\lambda_j}\Big]\Big)\Big) \wedge
[Y_{k,\iota_k}^{\rm an}]\Big)\Big)\Big).
\end{eqnarray*}
Lorsque $x$ est un point ferm\'e de $X$,
le cycle de Lelong de $M_k^s$ en $x^{\rm an}$ (qui est aussi celui de
$(M^s_k)_{\rm stable}$) s'interpr\`ete donc comme un courant de Vogel (au sens introduit dans la section \ref{sectionVogel}), 
ce de mani\`ere analogue \`a ce qui se produit dans le cadre archim\'edien (voir les sections 7 et 8 de
\cite{ASWY14}).
\end{proof}
\vskip 3mm
\noindent
\dedicatory{{\bf Remerciements}
\vskip 1mm
\noindent
Je tiens \`a remercier le rapporteur d'avoir lu extr\^ement attentivement les diff\'erentes versions de ce document et de m'avoir propos\'e de nombreuses   remarques et suggestions qui ont grandement contribu\'e \`a l'am\'eliorer. \\
L'auteur tient aussi \`a exprimer sa profonde gratitude \`a Alain Yger, Professeur \`a l'Institut de math\'ematiques de Bordeaux (Universit\'e de Bordeaux,  France), pour son aide tant pr\'ecieuse lors de la recherche.  Il lui est \'egalement agr\'eable de remercier Salomon Sambou, Professeur  de l'Universit\'e Assane Seck de Ziguinchor (S\'en\'egal), pour des discussions int\'eressantes.
}

\end{document}